\newtheorem{thm}{Theorem}
\newtheorem{proposition}{Proposition}[section]
\newtheorem{rem}[proposition]{Remark}
\newtheorem{lemme}[proposition]{Lemma}
\newcommand{\p}{\partial}
\newcommand{\Ra}{\mathbb{R}}
\numberwithin{equation}{section}
\title{Transonic limit of traveling waves of the Euler-Korteweg system}
\author{
Marc-Antoine Vassenet \thanks{Universit\'e Paris Dauphine, PSL Research University, Ceremade, Umr Cnrs 7534, Place du Mar\' echal De Lattre De Tassigny 75775 Paris cedex 16 (France).}}
\begin{document}
\maketitle
\begin{abstract}
We prove the convergence in the  transonic limit of two-dimensional traveling waves of the E-K system, up to rescaling, toward a ground state of the Kadomtsev-Petviashvili Equation. Similarly, in dimension one we prove the convergence in the transonic limit of solitons toward the soliton of the Korteweg de Vries equation.
\end{abstract}
\tableofcontents
\section{Introduction}
The Euler Korteweg system, in dimension $d$, reads
\begin{equation}
\label{E-K}
\tag{E-K}
  \left\{
      \begin{aligned}
        &\partial_t \rho + \text{div}(\rho u)=0, \\
        &\partial_tu+u. \nabla u+ \nabla g(\rho)=\nabla \bigg(K(\rho) \Delta\rho +\frac{1}{2}K'(\rho)| \nabla \rho|^2 \bigg), \quad t\in \Ra, \, x\in \mathbb{R}^d,
      \end{aligned}
    \right.
\end{equation}
where $\rho>0$ is the density of the fluid, $u \in \mathbb{R}^d$ is its velocity field, the right hand side of the second
line is the  capillary tensor. The functions $K$, $g$ are defined on  $\mathbb{R}^{+}_*$ and smooth, the function $K$ is positive. 
When the velocity is irrotational  i.e $u = \nabla \phi$ for some $\phi$ that cancels at infinity, the momentum equation rewrites
$$
\partial_t \phi +\frac{|\nabla \phi|^2}{2}+g(\rho)=K(\rho) \Delta \rho + \frac{1}{2}K'(\rho)|\nabla \rho|^2.
$$
There is a formally conserved energy
\begin{equation}
\label{energie E K}
    E(\rho, \phi) = \int_{\Ra^d} \frac{K(\rho)|\nabla \rho|^2+\rho |\nabla \phi|^2}{2}+G(\rho)dx,
\end{equation}
where $G$ is a primitive of $g$ with a later specified integration constant. Moreover we have a momentum 
\begin{equation}
\label{moment}
    P(\rho, \phi)= \int_{\mathbb{R}^d}(\rho-\rho_0)\partial_1 \phi dx.
\end{equation}
which makes sense when $\rho-\rho_0 \in L^2,\, \nabla \phi \in L^2$.
 The energy makes sense for $(\rho,u)$ localized near the constant state $(\rho_0,0)$, which will be our framework. We call traveling wave a solution of \eqref{E-K} of the form
$$
(\rho(x.\mathrm{n}-ct, x_\perp ), u(x\cdot \mathrm{n}-ct, x_\perp )), \; x_\perp=(x_1,x_2,...,x_d)-x\cdot \mathrm{n},
$$
where $c$ is the speed of propagation and   $\mathrm{n}$ the
direction of the speed.
The
direction of the speed does not matter, thus we let $\mathrm{n}=\mathrm{e_1}$.
A traveling wave solves
\begin{equation}
\label{E-K sim}
  \left\{
      \begin{aligned}
        &-c \partial_1 \rho+\text{div}(\rho \nabla \phi)=0, \\
        & -c \partial_1 \phi+g(\rho)-K(\rho)\Delta(\rho) +\frac{|\nabla\phi|^2}{2}-K'(\rho)\frac{|\nabla \rho|^2}{2}=0.
      \end{aligned}
    \right.
\end{equation}
In \cite{audiard2017small}, Audiard  proves the existence of  traveling waves in dimension two, localized near the constant state $(\rho_0, 0)$
with $g(\rho_0) = 0,\, g'(\rho_0)>0
$. Their speed is close but less than the speed of sound that we define now. When neglecting the capilary tensor and linearizing this system near $(\rho_0,0)$ i.e.  $\rho=\rho_0+r$,  we obtain the Euler equation
\begin{equation}
  \left\{
      \begin{aligned}
        &\partial_t r +\rho_0 \text{div}(u)=0, \\
        &\partial_tu+g'(\rho_0)\nabla r=0.
      \end{aligned}
    \right.
\end{equation}
The speed of sound is
$$
c_s=\sqrt{\rho_0 g'(\rho_0)}.
$$
For simplification, we use the following rescaling 
\begin{align*}
    &(\rho,\phi)=\bigg(\rho_0\rho_r\bigg(\sqrt{\frac{g'(\rho_0)}{\rho_0}x}\bigg), \phi_r\bigg(\sqrt{\frac{g'(\rho_0)}{\rho_0}x}\bigg)\bigg), \, K_r(\rho_r)=\frac{K(\rho_0 \rho_r)}{\rho_0}, \\&
    g_r(\rho_r)=\frac{g(\rho_0 \rho_r)}{g'(\rho_0)\rho_0}, c_r=\frac{c}{\sqrt{\rho_0g'(\rho_0)}}.
\end{align*}
Then \eqref{E-K sim} becomes
\begin{equation}
\label{E-K simple}
  \left\{
      \begin{aligned}
        &-c_r \partial_1 \rho_r+\text{div}(\rho_r \nabla \phi_r)=0, \\
        & -c_r \partial_1 \phi_r+g_r(\rho_r)-K_r(\rho_r)\Delta(\rho_r) +\frac{|\nabla\phi_r|^2}{2}-K'(\rho_r)\frac{|\nabla \rho_r|^2}{2}=0.
      \end{aligned}
    \right.
\end{equation}
In this system the constant state is 1,
 $g_r(1)=0$, $g_r'(1)=1$ and the speed of sound is $\sqrt{1g'(1)}=1$. We will forget the subscript $r$ and focus on the rescaled system, i.e. we will assume through the rest of the paper that 
 \begin{equation}
 \label{hypothese g}
     g(1)=0,\;g'(1)=1\; \text{and}\; c_s=1.
 \end{equation}
 Solutions of \eqref{E-K simple} with speed near the speed of sound are known to exist, the precise existence statement of \cite{audiard2017small} is the following:
\begin{thm}(\cite{audiard2017small} Theorem 1.1, proposition 3.3 and proposition 2.3)
\label{existencethm}
In dimension two, under the assumption $\Gamma:=3+g''(1) \neq 0 $, \, there exists $p_0>0$ such that for any $0\leq p \leq p_0$ we have $(\rho_p,\phi_p) \in \cap_{j\geq 0}\big(1+H^j\big) \times \dot{H}^{j+1} $, solution of $(\ref{E-K simple})$ for some  $c_p>0$, with $P(\rho_p,\phi_p)=p$.\newline  Moreover there exists $\alpha, \beta,C >0$ such that, for any $0\leq p\leq p_0 $
\begin{equation}
\label{energie-moment}
      p-\beta p^3 \leq E(\rho_p,\phi_p)\leq p- \alpha p^3, 
\end{equation}
\begin{equation}
1-\beta p^2 \leq c_p \leq 1-\alpha p^2,
\end{equation}
\begin{equation}
\label{2.3}
    ||\rho_p-1||_{\infty}\leq C \sqrt{E(\rho_p,\phi_p)} \quad  {\rm for} \; p \ll 1.
\end{equation}
\begin{equation}
\label{proposition 3.3}    
||\rho_p-1||_{\infty}\geq C p^2 \quad {\rm for} \; p \ll 1.
\end{equation}
\begin{rem}
As has been proven for the Schrödinger equation (see \cite{chiron2017travelingequationnonzero}), it is possible that such solitons exist in higher dimensions.
\end{rem}
\end{thm}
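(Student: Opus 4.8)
The plan is to realize $(\rho_p,\phi_p)$ as a minimizer of the energy under a fixed-momentum constraint. Since the system \eqref{E-K simple} is exactly the Euler--Lagrange equation of the functional $E-cP$ (the first line is the variation in $\phi$, the second the variation in $\rho$, with $c$ conjugate to $P$), I would fix $p>0$ small and study
$$
\mathcal{E}(p)=\inf\big\{E(\rho,\phi):\ P(\rho,\phi)=p,\ \rho-1\in H^1,\ \nabla\phi\in L^2\big\}.
$$
Any minimizer solves \eqref{E-K simple} with the Lagrange multiplier $c=c_p$ in the role of the speed. The first step is existence: a minimizing sequence is bounded in the energy space because $E$ controls $\|\nabla\rho\|_{L^2}$, $\|\nabla\phi\|_{L^2}$, and, through the strict convexity of $G$ near $1$ (recall $g'(1)=1>0$), the $L^2$ size of $\rho-1$. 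One then runs concentration--compactness: vanishing is incompatible with $P=p>0$ under bounded energy, while dichotomy is ruled out by the strict subadditivity $\mathcal{E}(p)<\mathcal{E}(p_1)+\mathcal{E}(p-p_1)$, itself a consequence of the cubic energy expansion below. Up to translation the sequence converges to a minimizer, and elliptic bootstrap on \eqref{E-K simple} gives the smoothness $(\rho_p,\phi_p)\in\cap_j(1+H^j)\times\dot H^{j+1}$.

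The quantitative estimate \eqref{energie-moment} is the core, and here the transonic long-wave scaling enters. Writing $\rho-1\approx \varepsilon^2 A(\varepsilon x_1,\varepsilon^2 x_2)$ with a matching ansatz for $\nabla\phi$ and $\varepsilon\sim p$, and inserting this into $E$ and $P$, the leading nontrivial order reduces the functional to the KP action in $d=2$; the coefficient of the quadratic KP nonlinearity is precisely $\Gamma=3+g''(1)$, so $\Gamma\neq0$ guarantees a genuine, non-degenerate limit. For the \emph{upper} bound $E\le p-\alpha p^3$ I would take the rescaled KP ground state (lump) as a competitor, normalize its momentum to $p$, and compute: the cubic defect is the strictly negative rescaled KP energy. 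The \emph{lower} bound $E\ge p-\beta p^3$ is the hard direction: it requires a sharp lower expansion of $E-P$ controlled by the best constant in the KP ground-state (Gagliardo--Nirenberg type) inequality, so that no admissible pair can undercut the scaling prediction.

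Granting \eqref{energie-moment}, the remaining estimates follow. The speed bounds come from the Hamiltonian identity $c_p=\mathcal{E}'(p)$ together with the monotonicity of $\mathcal{E}'$ (concavity of $\mathcal{E}$): differentiating $p-\beta p^3\le\mathcal{E}(p)\le p-\alpha p^3$ converts the integrated bounds into the pointwise derivative bounds $1-\beta p^2\le c_p\le 1-\alpha p^2$ after renaming constants; alternatively one may read them off Pohozaev-type identities for \eqref{E-K simple}. The pointwise bound \eqref{2.3} is an a priori estimate: viewing the second equation of \eqref{E-K simple} as an elliptic equation for $\rho-1$ and using the energy control gives $\|\rho-1\|_\infty\le C\sqrt{E}$ (crude but sufficient, since $\sqrt{E}\sim\sqrt p$), while the reverse bound \eqref{proposition 3.3} reflects that the KP profile has amplitude of order $\varepsilon^2\sim p^2$.

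The main obstacle is concentrated in the transonic regime, in two parts. First, the compactness step is genuinely delicate: the energy space embeds only non-compactly and the momentum is sign-indefinite at quadratic order, so ruling out dichotomy truly depends on the strict concavity of $\mathcal{E}$, which is not available a priori but must be extracted from the sharp cubic expansion — creating a mild circularity to be resolved by a bootstrap in the small parameter $p$. Second, and hardest, is the lower bound in \eqref{energie-moment}: it forces one to show that the minimizing profile, after the anisotropic rescaling $(\varepsilon x_1,\varepsilon^2 x_2)$, actually converges to the KP ground state, so that its variational optimality can be invoked. This profile convergence — not any isolated inequality — is where the real difficulty of the argument lies.
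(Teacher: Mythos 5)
This statement is not proved in the present paper at all: it is Audiard's result, quoted from \cite{audiard2017small}, and the paper only recalls the strategy, which is indeed the constrained minimization you propose. Your outline matches that strategy in broad terms, but it contains a gap that the paper itself explicitly flags. You pose the minimization for $E$ itself on $\mathcal{H}=(1+H^1)\times\dot H^1$ and claim coercivity from the convexity of $G$ near $\rho=1$. In dimension two $H^1(\mathbb{R}^2)\not\hookrightarrow L^\infty(\mathbb{R}^2)$, so along an arbitrary competitor (or minimizing sequence) $\rho$ need not stay near $1$: it can vanish or become negative on sets of positive measure, where $G(\rho)$ and $K(\rho)$ are simply undefined (the nonlinearities are only given on $\mathbb{R}^{+}_*$), and $\rho|\nabla\phi|^2$ need not be integrable. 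Hence $E$ is not even a well-defined functional on $\mathcal{H}$, and your coercivity argument says nothing away from $\rho=1$. The cited construction instead minimizes a modified functional $\widetilde E$, coercive on $\mathcal{H}$ and equal to $E$ whenever $|\rho-1|\ll1$, and only afterwards proves the a priori bound \eqref{2.3} for minimizers of $\widetilde E$, concluding that they satisfy $|\rho_p-1|\ll1$, hence are genuine critical points of $E-c P$ and solve \eqref{E-K simple}. This substitution is precisely what makes the extension from Gross--Pitaevskii to general $g$, $K$ nontrivial; omitting it is the main gap in your plan.

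Your account of the ``hard direction'' is also logically inverted. You assert that the lower bound in \eqref{energie-moment} forces one to first prove that the rescaled minimizing profile converges to the KP ground state. If that were the route, the main theorem of this paper would be circular: the transonic KP convergence is exactly Theorem \ref{thm de l'article}, and it is proved \emph{using} Theorem \ref{existencethm} as input --- for instance \eqref{energie-moment} is what gives the $L^2$ bound on $N_p$ in Proposition \ref{limitecommune} and the Pohozaev estimates of Lemma \ref{pohozaev lemma}. In \cite{audiard2017small} the two-sided cubic bound and the speed bounds are obtained by softer variational means (long-wave test functions for the upper bound; coercivity of $\widetilde E$, concavity/subadditivity of the minimal-energy curve and Pohozaev-type identities for the lower bound and for $c_p$), with no knowledge of the limiting profile. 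Relatedly, $c_p=\mathcal{E}'(p)$ is not available a priori since $\mathcal{E}$ is not known to be differentiable; concavity only yields one-sided derivatives, and one cannot simply ``differentiate'' the integrated bounds $p-\beta p^3\le\mathcal{E}(p)\le p-\alpha p^3$ to get pointwise bounds on $c_p$. So the overall plan invokes the right family of ideas, but the two decisive steps --- well-posedness of the variational problem and the cubic lower bound --- are not correct as written.
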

 To construct the traveling waves of theorem \ref{existencethm} the author in  \cite{audiard2017small} solves a minimization problem. On the space  $\mathcal{H}=\left\{(\rho,\phi) \in (1+H^1) \times \dot{H^1} \right\}$ the momentum is well-defined, however the energy \eqref{energie E K} does not make sense. For example the term $\rho |\nabla \phi|^2$ is not necessarily integrable.  The solution is to work with a modified energy $\widetilde{E}$ which has nice coercive properties and such that $\widetilde{E}= E$ if $| \rho -1| \ll 1$. Then the author finds $(\rho, \phi)$ solution of the minimization problem
\begin{equation}
\label{minimisateur pour E-k}
\inf \left\{  \widetilde{E}(\rho,\phi), \, (\rho,\phi) \in \mathcal{H}: P(\rho, \phi)=p \right\},
\end{equation}
for small $p$, such that the minimizer is smooth and satisfies $| \rho-1 | \ll 1$.
\newline Our aim is to describe the
asymptotic behaviour, as $p\rightarrow 0$, of the traveling waves $(\rho_p,\phi_p)$. It is instructive to compare our problem to the extensive litterature studying the nonlinear Schrödinger equation.
 Indeed in the case $K=\kappa / \rho$, with $\kappa$ a positive constant, up to a rescaling there exists a formal correspondance with the nonlinear Schrödinger equation
\begin{equation}
    \label{NLS}
    \tag{NLS}
    i\partial_t \Psi+\Delta \Psi =g\big(|\Psi|^2)\Psi,
\end{equation}
using the \textit{Madelung transform}\, $ \big( \rho, \nabla \phi\big) \mapsto \Psi :=\sqrt{\rho}e^{i\phi}$ (see \cite{CDS} for more details).
In the case $g(\rho)= \rho-1 $  \eqref{NLS} is called the Gross-Pitaevskii equation
\begin{equation}
\label{GP}
\tag{GP}
i\partial_t \Psi+ \Delta \Psi =\Psi ( |\Psi|^2-1) \; \text{on}  \; \mathbb{R}^d \times \mathbb{R}.
\end{equation}
The counterpart of \eqref{energie E K} is 
\begin{equation}
    E(\Psi)= \int_{\mathbb{R}^d} |\nabla \Psi|^2+\frac{1}{2}  \int_{\mathbb{R}^d} \big(1 -|\Psi|^2\big)^2,
\end{equation}
and that of \eqref{moment} is
\begin{equation}
    P(\Psi)=\frac{1}{2} \int_{\mathbb{R}^d} \mathfrak{Re}\big( i \nabla \Psi( \overline{\Psi-1})\big) .
\end{equation}
We also call traveling wave a solution of \eqref{GP} of the form
$$
\Psi(x,t) =v(x_1-ct, x_\perp ), x_\perp=(x_2,...,x_d),
$$
where $c$ is the speed of propagation. The traveling waves play an important role in the long time dynamics of \eqref{GP}  (see e.g  \cite{chiron2,bethuelgravejatsmetsdarksoliton,gravejat2015asymptoticblack,chiron2021uniquenessvortex,chiron2017travelingequationnonzero}).  The profile $v$  solves the equation
\begin{equation}
\label{TWc}
\tag{TWc}
    -ic\partial_1v+\Delta v+v(1-|v|^2)=0.
\end{equation}
Using the Madelung transform, the associated  speed of sound for \eqref{GP}, around the constant solution $\psi=1$, is $\sqrt{2}$ (a rescaling changes the quantity $\sqrt{1g'(1)}$  into $1$).
The transonic limit was first studied by physicists (see \cite{jones1986motions,jones1982motions}). In the one-dimensional case, equation \eqref{TWc} is integrable with elementary computations. Solutions to \eqref{TWc} are related to the soliton of the Korteweg-de Vries equation
\begin{equation}
    \label{KdV}
    \tag{KdV}
    \partial_t \psi+\psi \partial_1\psi+ \partial_1^3 \psi=0.
\end{equation}
Indeed in  the transonic limit $c \rightarrow \sqrt{2}$, the traveling waves converge, up to rescaling, to the \eqref{KdV}
soliton (see \cite{BGS3,Chiron}). A similar result exists in the two-dimensional case: for any  $p>0$, there exists a non-constant finite energy solution $v_p$ to \eqref{TWc} with $ P(v_p)=\frac{1}{2} \int_{\mathbb{R}^d}\mathfrak{Re}\big( i \nabla \Psi( \overline{\Psi-1})\big)  = p $ (see \cite{BGS} theorem 1 and the survey \cite{BGS3} for properties of traveling waves). The convergence, in the transonic limit,  of those minimizing traveling waves for
the two-dimensional Gross-Pitaevskii equation towards a ground state of
the Kadomtsev-Petviashvili equation was obtained by Bethuel-Gravejat-Saut (see \cite{BGS2}). The Kadomtsev-Petviashvili equation
\begin{equation}
\tag{KPI}
\label{KP1}
    \partial_t \psi + \psi \partial_1 \psi +\partial_1^3\psi -\partial_1^{-1}\big(\partial_2^2\psi \big)=0,
\end{equation}
is a higher dimensional generalization of the Korteweg de Vries equation with energy 
\begin{equation}
    E_{KP}(\psi)=\frac{1}{2}\int_{\Ra^2}(\p_1 \psi)^2+\frac{1}{2}\int_{\Ra^2}(\p_1^{-1}(\p_2 \psi))^2-\frac{1}{6}\int_{\Ra^2}( \psi)^3.
\end{equation}
It is a well-known asymptotic model for the propagation of weakly transverse dispersive
waves \cite{kadomtsev1970stability}.
Solitary waves are localized solutions to \eqref{KP1} of the form $\psi(x,t)=\omega(x_1-\sigma t,x_2)$, where $\omega$ belongs to the energy space for \eqref{KP1}, i.e. the space $Y(\mathbb{R}^2)$ (see \cite{BouardSaut3}) defined
as the closure of $\partial_1\mathcal{C}^{\infty}_c(\mathbb{R}^2)$ for the norm
$$
\|\partial_1f\|_{Y(\mathbb{R}^2)}=( \| \nabla f \|^2_{L^2(\mathbb{R}^2)}+\| \partial_1^2  f \|^2_{L^2(\mathbb{R}^2)} )^{\frac{1}{2}}.
$$
The equation of a
solitary wave $\omega$ of speed $\sigma= 1$ is given by    
\begin{equation}
\label{SW}
\tag{SW}
  \partial_1 \omega -\omega \partial_1 \omega -\partial_1^3 \omega + \partial_1^{-1}\big(\partial_2^2 \omega \big)=0.
\end{equation} 
Given any $\sigma >0$, the scale change $\overset{\sim}{\omega}(x,y)=\sigma\omega \big( x\sqrt{\sigma},y\sigma\big)$ transforms any solution of equation \eqref{SW} into  a solitary wave $\overset{\sim}{\omega}$ of speed $\sigma$. The strategy in \cite{BGS2}  is to rewrite \eqref{GP} as an hydrodynamical system using Madelung transform, then rewrite the new equationss as a Kadomtsev-Petviashvili equation with some remainder. This transonic limit convergence result has been generalized in dimension two and three by Chiron and Mari{\c{s}} in \cite{chiron2014rarefaction} for a large class of nonlinearities.\newline 
In the same spirit, it is proved in \cite{chiron114} that solutions of \eqref{E-K} with well-prepared initial data converge,  in a long wave asymptotic regime,  to a solution of the Kadomtsev-Petviashvili (Korteweg de Vries in the one-dimensional case) equation (See also \cite{bethuel20092,bethuel2009korteweg,chiron2010kdvequationschro,chiron2014error}).
\par
The main focus of this paper is the convergence in the  transonic limit of the Euler Korteweg two dimensional traveling waves to a ground state of \eqref{KP1}. We also obtain with a more elementary argument, in dimension one, the convergence of the Euler-Korteweg soliton toward the soliton to the Korteweg-de-Vries equation.
\paragraph{Heuristic}Let us do the following formal computation. We let $\rho_p(x_1,x_2)=1+ \varepsilon_p^2\eta_p(z_1,z_2) $, $\phi_p=\varepsilon_p\theta_p(z_1,z_2)$, with $z_1=\varepsilon_p x_1$, $z_2=\varepsilon_p^2 x_2$
and $c_p=\sqrt{1-\varepsilon_p^2}$.
Then the first line of (\ref{E-K simple}) rewrites
$$
-c_p\partial_1\eta_p+\partial_1^2\theta_p+\varepsilon_p^2 \big(\partial_2^2\theta_p+\eta_p\partial_1^2 \theta_{p}+\partial_1 \eta_p \partial_1 \theta_p \big)= O(\varepsilon_p^4),
$$
furthermore, by Taylor expansion we have $ g=\varepsilon_p^2\eta_p +g''(1)\varepsilon_p^4 \frac{\eta_p^2}{2}+ O(\varepsilon_p^4)$, then the second lines of \eqref{E-K simple} rewrites
$$
-c_p \partial_1\theta_p+ \eta_p + \varepsilon_p^2 \bigg(\frac{g''(1) \eta_p^2+(\partial_1\theta_p)^2}{2}-K(1)\partial_1^2\eta_p\bigg)= O (\varepsilon_p^4).
$$
At first order we have $\partial_1\theta_p=\eta_p + O (\varepsilon_p^2)$, so that these functions should have the same limit. Then, multiplying the first equation by $c_p$ and applying the operator $\partial_1$ to the second equation, we obtain
$$
\partial_1 \eta_p + \partial_2^2\partial_1^{-1}\eta_p+ (3+g''(1))\eta_p \partial_1\eta_p-K(1) \partial_1^3\eta_p= O(\varepsilon_p^2).
$$
Finally, we let 
$$
\eta_p=-\frac{1}{3+g''(1)}N_p \bigg(\frac{x_1}{\sqrt{K(1)}},\frac{x_2}{\sqrt{K(1)}} \bigg).
$$
Then $N_p$ is an (approximate) solution to \eqref{SW}.
\paragraph{Main result}
Let $(\rho_p,\phi_p)$ be the  solution given by theorem 1, we consider
\begin{equation}
     \eta_p(x_1,x_2) =\rho_p(x_1,x_2)-1,
\end{equation}
and the rescaled functions
\begin{equation}
\left\{\begin{array}{lll}
\displaystyle
\theta_p(x_1,x_2)  &=&  \displaystyle
-\frac{1}{\gamma\varepsilon_p\sqrt{K(1)}}\phi_p \bigg( \frac{\sqrt{K(1)}x_1}{\varepsilon_p}, \frac{\sqrt{K(1)}x_2}{\varepsilon_p^2} \bigg), \\
\displaystyle
N_p(x_1,x_2)  &=&  \displaystyle
 -\frac{1}{\gamma\varepsilon_p^2}\eta_p \bigg( \frac{\sqrt{K(1)}x_1}{\varepsilon_p}, \frac{\sqrt{K(1)}x_2}{\varepsilon_p^2}\bigg).\end{array}
 \right.
\end{equation}
where 
\begin{equation}
\label{epsilon}
\varepsilon_p=\sqrt{1-c_p^2}\;\; \text{and} \;\;  \gamma=\frac{1}{g''(1)+3}.
\end{equation}
Our main theorem is 
\begin{thm}\label{thm de l'article} Under the assumption $\Gamma:=3+g''(1) \neq 0 $,
let $(p_n)_{n \in\mathbb{N}}$ such that, $p_n \rightarrow 0$. Then, there exists a  ground state $N_0$ of \eqref{KP1}, such that,  up to a subsequence, we have 
\begin{equation}
    N_{p_n} \rightarrow N_0 \; \text{in} \; W^{k,q}(\mathbb{R}^2),  \text{when}   \;n \rightarrow \infty.
\end{equation}
and 
\begin{equation}
    \partial_1 \theta_{p_n} \rightarrow N_0 \; \text{in} \; W^{k,q}(\mathbb{R}^2), \text{when}  \;n \rightarrow \infty.
\end{equation}
for any $k \in \mathbb{N}$ and any $1<q\leq \infty$.
\end{thm}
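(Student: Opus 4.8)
The plan is to turn the formal computation of the Heuristic paragraph into a rigorous argument, following the scheme of Bethuel--Gravejat--Saut \cite{BGS2} for Gross--Pitaevskii. First I would insert the definitions of $N_p$ and $\theta_p$ into the rescaled system \eqref{E-K simple}, Taylor-expand $g$ and $K$ about the constant state $1$, and collect the terms by powers of $\varepsilon_p$. This should produce the rescaled density equation
$$
\partial_1 N_p - N_p\,\partial_1 N_p - \partial_1^3 N_p + \partial_1^{-1}\partial_2^2 N_p = \varepsilon_p^2 R_p,
$$
with $R_p$ a polynomial in $N_p,\theta_p$ and their derivatives, together with the leading-order identity $\partial_1\theta_p - N_p = O(\varepsilon_p^2)$ coming from the mass-conservation equation (the first line of \eqref{E-K simple}). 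This second identity is what will eventually force $\partial_1\theta_p$ and $N_p$ to share the same limit.

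The second step is to produce bounds that are uniform in $p$. From the speed estimate in Theorem \ref{existencethm} one gets $\varepsilon_p^2 = 1-c_p^2 \sim p^2$, so $\varepsilon_p \to 0$ and $\varepsilon_p \sim p$. The decisive input is the energy--momentum relation \eqref{energie-moment}: since $P(\rho_p,\phi_p)=p$ and $p-\beta p^3 \le E(\rho_p,\phi_p) \le p-\alpha p^3$, the action $E(\rho_p,\phi_p)-c_p P(\rho_p,\phi_p)$ (with $c_p$ the Lagrange multiplier of the minimization \eqref{minimisateur pour E-k}) is of order $p^3$. Rewriting this action in the rescaled variables, I would show it equals a fixed positive power of $\varepsilon_p$ times a Kadomtsev--Petviashvili energy $E_{KP}(N_p)$ plus controllable corrections; comparability with $p^3\sim\varepsilon_p^3$ then yields a uniform bound on $E_{KP}(N_p)$, hence a uniform bound on $N_p$ in the energy space $Y(\mathbb{R}^2)$. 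A bootstrap on the rescaled equation, using the regularizing properties of the linear part of \eqref{SW}, upgrades this to uniform $H^k$ bounds for every $k$, and likewise controls $\partial_1\theta_p$.

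With uniform bounds at hand, the third step extracts a subsequence with $N_{p_n}\rightharpoonup N_0$ weakly. Passing to the limit in the rescaled equation, the remainder $\varepsilon_{p_n}^2 R_{p_n}$ vanishes, so $N_0$ solves \eqref{SW} in the sense of distributions; since $\partial_1\theta_{p_n}-N_{p_n}\to 0$, the sequence $\partial_1\theta_{p_n}$ converges to the same $N_0$. To rule out $N_0\equiv 0$ I would use the lower bound \eqref{proposition 3.3}, which after rescaling reads $\|N_p\|_{\infty}\gtrsim 1$, together with a concentration-compactness argument guaranteeing that, after suitable translations, the profiles neither vanish nor spread to infinity; the limit is then a nontrivial solution of \eqref{SW}.

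The final and hardest step is to upgrade weak to strong convergence in every $W^{k,q}(\mathbb{R}^2)$, $1<q\le\infty$, and to identify $N_0$ as a ground state. The main obstacle is precisely the exclusion of dichotomy and vanishing in the concentration-compactness analysis: I must show that no energy or momentum leaks away in the limit. Here I expect to exploit the minimality of $(\rho_p,\phi_p)$ for \eqref{minimisateur pour E-k}: the variational characterization should pass to the rescaled limit and force $E_{KP}(N_0)$ to be minimal among nontrivial solutions of \eqref{SW}, i.e. $N_0$ is a ground state. Once energy is conserved in the limit, strong convergence in the energy space follows from weak convergence plus convergence of norms; finally, a bootstrap using \eqref{SW} and elliptic regularity of the KP operator, combined with Sobolev embeddings (the case $q=\infty$ coming from high enough $H^k$ control), promotes the convergence to all $W^{k,q}$.
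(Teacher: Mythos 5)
Your proposal follows essentially the same route as the paper: the same anisotropic rescaling and KP-with-remainder reformulation, uniform bounds coming from the energy--momentum relation \eqref{energie-moment} and Pohozaev identities followed by a bootstrap on the rescaled equation, identification of the weak limit as a nontrivial solution of \eqref{SW} via \eqref{proposition 3.3}, and the variational endgame in which minimality for \eqref{minimisateur pour E-k} makes $\partial_1\theta_{p_n}$ an almost-minimizing sequence for the KP problem, so that concentration-compactness (proposition \ref{compacite solution KP}) yields a ground state limit and strong convergence, upgraded to all $W^{k,q}$ by interpolation against the uniform bounds. The differences are matters of technical implementation rather than strategy: the paper's bootstrap runs through the anisotropic convolution kernels $K^{i,j}_{\varepsilon_p}$ and Lizorkin's multiplier theorem rather than the linear part of \eqref{SW} itself, and the minimality is exploited through an explicit competitor built from a KP ground state (lemma \ref{3}).
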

\begin{rem}
The proof in the manuscript follows the same lines of \cite{BGS2}, but is more involved at a technical level
because of the extension to arbitrary non-linearities $g$ and $K$. It provides an
extension of recent results on the nonlinear Schrödinger equations with non-zero condition at
infinity towards the wider, but still physically relevant, class of the Euler-Korteweg systems. See \cite{chiron2019smoothpacherie} for the existence of smooth branch of travelling waves for the Euler-Korteweg  equation converging to the first lump in the transonic limit. It is not known that the solitons of Theorem \ref{existencethm} are the same as those in \cite{chiron2019smoothpacherie},
thus the result of theorem \ref{thm de l'article} is not contained in \cite{chiron2019smoothpacherie} (and conversely) .
To obtain the convergence of the full sequence, it is sufficient that the limit $N_0$ is unique  but it is a difficult and open problem (see \cite{liu2019nondegeneracy}). 
\end{rem}
\begin{rem}
 A condition similar to $\Gamma \ne 0$  is highlighted in \cite{chiron2017travelingequationnonzero} for the Schrödinger equation.
As has been proven for
the Schrödinger equation (see \cite{chiron2014rarefaction}), it is possible that a similar result exist in dimension three. Let us recall that the existence of solitons in dimension 3 is an open problem for Euler-Korteweg.
\end{rem}
We will also prove a similar result in the one-dimensional case. That is,    the  \eqref{E-K} solitons  converge,  up  to rescaling,  to  the \eqref{KdV} soliton, in the transonic limit (see the appendix \ref{appendix diml 1} for a precise statement). Moreover as the computation are simpler in dimension 1 we are able to compute the transonic limit for $\Gamma=0$ and a new nondegeneracy condition (see proposition \ref{proposition gamma = 0 }). In this case, the limit is not a solution of \eqref{KdV} but of (gKdV).
\begin{proposition}
\label{dim 1 convergence gamma pas 0}
Under the conditions $$
g(1) = 0, \;g'(1) = 1,\; \Gamma \ne 0 . $$
there exists  $(\rho,u)$ global solution of
\begin{equation}
\tag{E-K}
\label{ekk}
  \left\{
      \begin{aligned}
        &-c\rho'+(\rho u)'=0, \qquad \qquad \quad \quad \qquad \quad \quad  (1) \\
        &\-cu'+ \bigg( \frac{u^2}{2} \bigg)'+g'= \bigg( K(\rho) \rho''+\frac{1}{2}K'(\rho)\rho'^2 \bigg)' \quad (2), \quad x\in \mathbb{R},
      \end{aligned}
    \right.
\end{equation}
with $\|\rho - 1\|_{L^{\infty}}\underset{\varepsilon \rightarrow 0}{\rightarrow}0$.
Moreover if we let 
\begin{equation}
    \rho-1=-\varepsilon^2 \gamma r_{\varepsilon}\bigg( \frac{\varepsilon x}{\sqrt{K(1)}} \bigg),
\end{equation}
then for any $k \in \mathbb{N}$
$$
\|r_{\varepsilon}^{(k)} -N^{(k)}\|_{L^{\infty}(\mathbb{R})} \underset{\varepsilon \rightarrow 0}{\longrightarrow} 0,
$$
where 
\begin{equation}
    N(x)=\frac{3}{ch^2(\frac{x}{2})},
\end{equation}
is the classical soliton to the Korteweg-de-Vries equation
\begin{equation}\label{Kdv} \tag{KdV}
    - \psi'+\psi \psi'+\psi'''=0.
\end{equation}
\end{proposition}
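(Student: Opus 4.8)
The plan is to use that in one space dimension the traveling-wave system \eqref{ekk} is an integrable ODE, so the whole statement reduces to an elementary phase-plane analysis followed by a rescaling. First I would integrate the mass equation $(1)$: since $-c\rho'+(\rho u)'=0$ and $(\rho,u)\to(1,0)$ at infinity, one gets $\rho u=c(\rho-1)$, so that $u=c(\rho-1)/\rho$ is entirely slaved to $\rho$, and a short computation gives the identity $-cu+\tfrac12 u^2=\tfrac{c^2}{2}(\rho^{-2}-1)$. Integrating the momentum equation $(2)$ once and using the boundary conditions together with $g(1)=0$ yields the second-order ODE
\[
K(\rho)\rho''+\tfrac12 K'(\rho)(\rho')^2=\frac{c^2}{2}\Bigl(\frac{1}{\rho^2}-1\Bigr)+g(\rho).
\]
Multiplying by $\rho'$ and observing that the left-hand side equals $\frac{d}{dx}\bigl[\tfrac12 K(\rho)(\rho')^2\bigr]$, a second integration produces the quadrature
\[
K(\rho)(\rho')^2=V(\rho),\qquad V(\rho):=2G(\rho)-\frac{c^2(\rho-1)^2}{\rho},
\]
where $G$ is the primitive of $g$ normalized by $G(1)=0$.

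Next I would read the soliton off the potential $V$. Writing $\rho=1+s$ and Taylor expanding, using $g(1)=0$, $g'(1)=1$ and $c^2=1-\varepsilon^2$, one finds
\[
V(1+s)=\varepsilon^2 s^2+\tfrac{\Gamma}{3}s^3+O(\varepsilon^2 s^3)+O(s^4),\qquad \Gamma=3+g''(1),
\]
so $V$ has a double root at $s=0$ and, for $c<1$ close to $1$, one further simple root near $1$, namely $\rho_\ast=1-3\varepsilon^2/\Gamma+o(\varepsilon^2)$, with $V>0$ strictly between $1$ and $\rho_\ast$ while $K>0$. This is exactly the configuration giving a homoclinic orbit to $\rho=1$, unique up to translation and (since the quadrature is reversible) symmetric about its extremum; this is the soliton $(\rho,u)$. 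It is smooth, stays positive, and $\|\rho-1\|_{L^\infty}=|\rho_\ast-1|=O(\varepsilon^2)\to 0$. The sign convention in the rescaling makes the rescaled profile positive regardless of the sign of $\Gamma$.

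I then insert $\rho-1=-\varepsilon^2\gamma\,r_\varepsilon(y)$, $y=\varepsilon x/\sqrt{K(1)}$, $\gamma=1/\Gamma$, into the quadrature, centering the soliton at its maximum so that $r_\varepsilon$ is even. With $s=-\tfrac{\varepsilon^2}{\Gamma}r_\varepsilon=O(\varepsilon^2)$ every surviving term is of order $\varepsilon^6/\Gamma^2$: the left-hand side gives $\tfrac{\varepsilon^6}{\Gamma^2}\,\tfrac{K(1+s)}{K(1)}(r_\varepsilon')^2$ with $K(1+s)/K(1)=1+O(\varepsilon^2)$, while $V$ contributes $\tfrac{\varepsilon^6}{\Gamma^2}(r_\varepsilon^2-\tfrac13 r_\varepsilon^3)$ up to $O(\varepsilon^8)$ remainders. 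After dividing by $\varepsilon^6/\Gamma^2$ this reads
\[
(r_\varepsilon')^2=r_\varepsilon^2-\tfrac13 r_\varepsilon^3+\varepsilon^2 R_\varepsilon(r_\varepsilon),
\]
and equivalently, after one differentiation,
\[
r_\varepsilon''=r_\varepsilon-\tfrac12 r_\varepsilon^2+\varepsilon^2\widetilde{R}_\varepsilon,
\]
with $R_\varepsilon,\widetilde{R}_\varepsilon$ smooth and vanishing at $r=0$. The limiting equation $(r')^2=r^2-\tfrac13 r^3$ is precisely the first integral of the \eqref{Kdv} profile equation $-N+\tfrac12 N^2+N''=0$, whose unique positive even homoclinic is $N(x)=3/\cosh^2(x/2)$.

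Finally I would pass to the limit $\varepsilon\to0$. The quadrature and the turning-point analysis give the uniform bound $0\le r_\varepsilon\le 3+o(1)$, and the linearization $r_\varepsilon''\approx r_\varepsilon$ near $r=0$ gives uniform exponential decay of $r_\varepsilon$ and $r_\varepsilon'$; hence Arzel\`a--Ascoli yields a subsequence converging in $C^1_{\mathrm{loc}}$ to a solution $r_0$ of $(r_0')^2=r_0^2-\tfrac13 r_0^3$ with peak value $3$ centered at $0$, so that $r_0=N$ by the elementary uniqueness of this homoclinic. Convergence of every derivative is then obtained by bootstrapping on the second-order ODE: it expresses $r_\varepsilon''$ through $r_\varepsilon$, and differentiating it repeatedly controls $r_\varepsilon^{(k)}$ by lower-order quantities that already converge, the remainders staying $O(\varepsilon^2)$ in each $C^k$ norm. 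I expect the main technical point to be precisely here, in making the error $\varepsilon^2 R_\varepsilon$ and its derivatives uniform over the whole line including near infinity: one must check that each remainder carries enough powers of $r_\varepsilon$ (it vanishes at $r=0$) so that the exponential tail of $N$ is preserved and the convergence genuinely holds in $L^\infty(\Ra)$, and not merely locally. The uniform decay together with the $C^k$ bounds then upgrades $L^\infty$ convergence to $W^{k,q}(\Ra)$ for every $k\in\mathbb{N}$ and $1<q\le\infty$.
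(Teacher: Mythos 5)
Your proposal is correct and follows essentially the same route as the paper: integrate the mass equation to slave $u$ to $\rho$, integrate the momentum equation twice to the quadrature $\tfrac12 K(\rho)(\rho')^2=F_\varepsilon(\rho)$, locate the simple zero $\rho_{m,\varepsilon}\approx 1-3\gamma\varepsilon^2$ of $F_\varepsilon$ next to the double root at $1$, build the even homoclinic by phase-plane analysis, rescale, pass to the limit locally, and bootstrap derivatives through the ODE. The only (inessential) technical variants are that the paper obtains the local limit by Cauchy--Lipschitz with continuous dependence on the parameter $\varepsilon$ and the initial value $r_\varepsilon(0)\to 3$, rather than by Arzel\`a--Ascoli plus uniqueness of the homoclinic, and it upgrades to convergence in $L^\infty(\mathbb{R})$ using the monotonicity of $r_\varepsilon$ and $N$ on each half-line together with their decay at infinity, which sidesteps the uniform-exponential-decay estimate (and the attendant check that the remainder vanishes quadratically at $r=0$) that you flag as the main technical point.
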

\begin{rem}
The argument proposed in \cite{Chiron} for the Schrödinger equation should
extend to our framework, nevertheless we propose an alternative proof in section \ref{appendix diml 1}.
\end{rem}
It is also possible to describe the case $\Gamma=0$:
\begin{proposition}\label{proposition gamma = 0 }
Under the conditions $$
g(1) = 0, \,g'(1) = 1,\, \Gamma = 0, \, \Gamma':=g'''(1)-12<0 , $$
There exists $(\rho^{\pm},u^{\pm})$ global solution of \eqref{ekk} with $\|\rho^{\pm} - 1\|_{L^{\infty}}\underset{\varepsilon \rightarrow 0}{\rightarrow}0$ .
Moreover if we let 
$$
  \rho^{\pm}-1=\varepsilon \gamma' r_{\varepsilon}^{\pm}\bigg( \frac{\varepsilon x}{\sqrt{K(1)}} \bigg),
$$
with $\gamma'=\frac{1}{\sqrt{12-g'''(1)}}$ then for any $k \in \mathbb{N}$ 
$$\|r^{\pm(k)}_{\varepsilon} -(w^{\pm(k)})\|_{L^{\infty}(\mathbb{R})} \underset{\varepsilon \rightarrow 0}{\longrightarrow} 0.$$ where $w^{\pm}(x)=\pm\frac{\sqrt{12}}{ch(x)}$ are the two opposite soliton of the focusing modified Korteweg de Vries equation
\begin{equation}
    \tag{mKdV}
    \psi'-\frac{1}{2}\psi^2\psi'=\psi'''.
\end{equation}
\end{proposition}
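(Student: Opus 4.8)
The plan is to reduce the one–dimensional profile system \eqref{ekk} to a single conservative second order ODE, read off the soliton from a phase–plane analysis, and then pass to the transonic limit inside the resulting first integral. First I would integrate the mass equation $(1)$ of \eqref{ekk}: imposing $\rho\to1$ and $u\to0$ at infinity gives $\rho(u-c)=-c$, hence $u=c\,(1-1/\rho)$, so the velocity is entirely slaved to $\rho$. Substituting into the momentum equation $(2)$ and integrating once (the constant being fixed by the conditions at infinity together with $g(1)=0$), and using the identity $(\rho-1)+\tfrac12(\rho-1)^2=\tfrac12(\rho^2-1)$, one obtains
\begin{equation*}
K(\rho)\rho''+\tfrac12 K'(\rho)(\rho')^2 = g(\rho)-\tfrac{c^2}{2}+\tfrac{c^2}{2\rho^2} =: V'(\rho).
\end{equation*}
Multiplying by $\rho'$ shows this ODE is conservative, and with $V(1)=V'(1)=0$ one is left with the first order relation $\tfrac12 K(\rho)(\rho')^2 = V(\rho)$, where $V(\rho)=\int_1^\rho V'(s)\,ds$.

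Next I would expand $V$ near $\rho=1$. Writing $\rho=1+\varepsilon s$ and using $g'(1)=1$, the hypothesis $\Gamma=0$ (equivalently $g''(1)=-3$) and $c^2=1-\varepsilon^2$, the linear term vanishes, the would-be cubic contribution carries an extra factor $\varepsilon^2$, and one finds
\begin{equation*}
\varepsilon^{-4}\,V(1+\varepsilon s) = \tfrac{s^2}{2}+\tfrac{\Gamma'}{24}\,s^4 + O(\varepsilon), \qquad \Gamma'=g'''(1)-12<0 .
\end{equation*}
The limiting profile $\tfrac{s^2}{2}+\tfrac{\Gamma'}{24}s^4$ has a double zero at $s=0$ and two simple zeros at $s=\pm\sqrt{-12/\Gamma'}=\pm\sqrt{12}\,\gamma'$, and is strictly positive in between; since $V''(1)=\varepsilon^2>0$, $\rho=1$ is a hyperbolic equilibrium whose effective potential $-V$ has a strict local maximum. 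The implicit function theorem applied to $s\mapsto \varepsilon^{-4}V(1+\varepsilon s)$ then produces, for every small $\varepsilon>0$, two exact simple turning points $\rho_*^{\pm}=1\pm\varepsilon\sqrt{12}\,\gamma'+o(\varepsilon)$ with $V>0$ strictly between $1$ and $\rho_*^{\pm}$. Consequently $(\rho')^2=2V/K$ admits a smooth homoclinic orbit on each side: the travel time $x=\pm\int\sqrt{K/2V}$ diverges logarithmically at the double zero $\rho=1$ and converges at the simple zero $\rho_*^{\pm}$. This yields two global, smooth, exponentially decaying solutions $\rho^{\pm}$ with $\|\rho^{\pm}-1\|_{L^\infty}=|\rho_*^{\pm}-1|=O(\varepsilon)\to0$, and $u^{\pm}=c(1-1/\rho^{\pm})$ inherits the same regularity and decay.

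Finally, for the convergence I would insert the rescaling $\rho-1=\varepsilon\gamma' r_\varepsilon(\varepsilon x/\sqrt{K(1)})$ into the first integral. Using $(\gamma')^2=1/(12-g'''(1))=-1/\Gamma'$, the leading ($\varepsilon^4$-homogeneous) part of $(\rho')^2=2V/K$ becomes
\begin{equation*}
(r_\varepsilon')^2 = r_\varepsilon^2 - \tfrac{1}{12}\,r_\varepsilon^4 + O(\varepsilon),
\end{equation*}
whose $\varepsilon\to0$ limit is precisely the first integral of the focusing (mKdV) soliton equation $w''=w-\tfrac16 w^3$, solved by $w^{\pm}=\pm\sqrt{12}/\cosh$. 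After normalising each $r_\varepsilon^{\pm}$ by translation so that it attains its extremum $\pm\sqrt{12}$ at the origin, continuous dependence for this autonomous first order ODE gives $r_\varepsilon^{\pm}\to w^{\pm}$ locally uniformly; the uniform-in-$\varepsilon$ exponential decay (coming from the hyperbolicity of $0$, with a decay rate bounded below) upgrades this to convergence in $L^\infty(\Ra)$, and differentiating the rescaled equation and bootstrapping then yields $\|r_\varepsilon^{\pm\,(k)}-w^{\pm\,(k)}\|_{L^\infty(\Ra)}\to0$ for every $k$.

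I expect the main obstacle to be the uniform-in-$\varepsilon$ control of the \emph{exact} potential $V_\varepsilon$, as opposed to its Taylor polynomial. One must guarantee simultaneously that $V_\varepsilon$ retains a single simple turning point $\rho_*^{\pm}$ close to $1$ with $V_\varepsilon>0$ strictly in between (so that the homoclinic orbit genuinely exists and stays in the region $\rho>0$ where the reduction is valid), and that the exponential decay rate of the rescaled profiles remains bounded below uniformly in $\varepsilon$; the latter is exactly what converts the local-uniform limit into the claimed $L^\infty$ convergence of every derivative.
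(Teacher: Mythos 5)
Your proposal follows essentially the same route as the paper: integrate the mass equation to slave $u$ to $\rho$, reduce to the planar conservative ODE \eqref{**} with first integral $\tfrac12 K(\rho)(\rho')^2=F_\varepsilon(\rho)$, locate the two simple turning points $\rho_*^{\pm}=1\pm\varepsilon\sqrt{12}\,\gamma'+o(\varepsilon)$ produced by the quartic behaviour of $F_\varepsilon$ when $\Gamma=0$, run the homoclinic orbit from each turning point, rescale, and pass to the limit. This is exactly the scheme the paper carries out for $\Gamma\neq 0$ in the appendix and then declares ``similar'' for $\Gamma=0$; your expansion $\varepsilon^{-4}F_\varepsilon(1+\varepsilon s)=\tfrac{s^2}{2}+\tfrac{\Gamma'}{24}s^4+O(\varepsilon)$ and the identity $\Gamma'(\gamma')^2=-1$ are correct, and the differences in the existence part are cosmetic (implicit function theorem versus the paper's intermediate value theorem for $\rho_{m,\varepsilon}$).

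One step, however, fails as stated: you cannot invoke ``continuous dependence for this autonomous first order ODE'' $(r_\varepsilon')^2=r_\varepsilon^2-\tfrac1{12}r_\varepsilon^4+O(\varepsilon)$ with data prescribed at the turning point. At that point the right-hand side vanishes, its square root is not Lipschitz, and the constant function $r\equiv r_\varepsilon(0)$ is a spurious solution of the first-order equation, so the initial value problem you would be comparing is not well posed and Picard--Lindel\"of continuous dependence does not apply. The repair is precisely what the paper does: apply the Cauchy--Lipschitz theorem with parameter to the smooth second-order system for $X=(r_\varepsilon,r_\varepsilon')$, whose limiting problem $w''=w-\tfrac16 w^3$, $w(0)=\pm\sqrt{12}$, $w'(0)=0$ has the unique solution $w^{\pm}$ (note $w''(0)=\mp\sqrt{12}\neq 0$, so the turning point is nondegenerate for the second-order flow). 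You already derived this second-order equation in your reduction, so the fix costs nothing. A last, minor divergence: you globalize the local uniform convergence via a uniform-in-$\varepsilon$ exponential decay rate, which you correctly identify as delicate; the paper instead uses that $r_\varepsilon$ and the limit soliton are monotone on each half-line and tend to $0$ at infinity, together with the first integral, which sidesteps any uniform decay estimate. The bootstrap for higher derivatives through the ODE is the same in both arguments.
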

\paragraph{Organization of the article}
In section 2 we introduce the notations and recall the properties of solitary waves solutions to \eqref{KP1}. In section 3 we prove that $N_p$ and $\partial_1 \theta_p$ converge and have the same limit. In section 4, we prove that Sobolev bounds for $N_p$  give bound for  $\theta_p$. In section 5,  using the  Taylor expansion in \eqref{E-K simple} with respect to  $\varepsilon$  we obtain the \eqref{SW} equation with some remainder. 
Using Fourier transform we obtain Sobolev bounds for  $N_p$ and $\partial_1\theta_p$, in section 6 and 7. Finally, we end the proof of theorem 2 in section 8.
\section{Notations, functional spaces and properties of solution to \eqref{KP1} }
\paragraph{Functional spaces}
Let  $p \in [1,+\infty]$, $k \in \mathbb{N}$, $0<\alpha<1$ and $\Omega$ be an 
 smooth open subset of $\mathbb{R}^d$. 
We denote by $W^{m,p}(\Omega)$, $H^{m}(\Omega)=W^{m,2}(\Omega)$ the usual Sobolev spaces.
For $s \geq 0$,  we define
$$
H^s(\mathbb{R}^d)= \left\{ u \in L^2(\mathbb{R}^d) \bigg| \|u\|^2_{H^s(\mathbb{R}^d)}= \int_{\mathbb{R}^d} (1+|\xi|^2)^s|\hat{u}(\xi)|^2d\xi < \infty \right\}.
$$
We define
$$
\dot{H}^s(\mathbb{R}^d)= \left\{u\bigg|\ \hat{u} \in L_{loc}^1(\mathbb{R}^d),\  \int_{\mathbb{R}^d} |\xi|^{2s}|\hat{u}(\xi)|^2d\xi < \infty \right\}.
$$
$C^{0,\alpha}(\Omega)$ is the space of bounded $\alpha$-Hölder continuous  function on $\Omega$.
We define
$$C^k_0(\mathbb{R}^d) = \{ u \in C^k(\mathbb{R}^d): \lim_{|x| \rightarrow \infty} | \partial^{\alpha} u(x)|=0, \; \forall \alpha \in \mathbb{N}^d,\, |\alpha| \leq k \},$$ with the norm
$$
||u||_{C^k_0(\mathbb{R}^d)}= \sum_{|\alpha| \leq k } \sup_{\mathbb{R}^d}|\partial^{\alpha}u|.
$$
\paragraph{Sobolev embedding} We recall the Sobolev embeddings,
 \begin{equation}
\label{embedding sobolev}
  \forall\, kq_1<d ,\;W^{k,q_1}(\mathbb{R}^d) \hookrightarrow L^{q_2}(\mathbb{R}^d), \, q_2=\frac{dq_1}{d-kq_1},\; 
 \end{equation}
 \begin{equation}
 \label{holder espace injection}
     \forall \, d<q<\infty, \, W^{1,q}(\Omega) \hookrightarrow C^{0,1-\frac{d}{q}}(\Omega).
 \end{equation} 
\begin{equation}
\label{injection ck sobolev}
\forall (k-k')q>d,\; W^{k,q}(\Ra^d) \hookrightarrow C^{k'}_0(\Ra^d).
\end{equation} 
Moreover if $\Omega$ is bounded the embedding \begin{equation}
\label{injection compacte holder}
W^{1,q}(\Omega) \hookrightarrow C^{0,1-\frac{d}{\alpha}}(\Omega), d<\alpha<q,     
\end{equation}is compact.
In particular
\begin{equation}
\label{lemme9}
    H^{\frac{d}{2}}(\mathbb{R}^d)  \hookrightarrow \, L^q(\mathbb{R}^d),\; 2 \leq q < \infty,
\end{equation}
\begin{equation}
\label{rappelsuite}
     \forall \; 0\leq s <\frac{d}{2}, \,
H^{s}(\mathbb{R}^d) \hookrightarrow L^q(\mathbb{R}^d), \;  2 \leq q \leq \frac{2d}{d-2s},
\end{equation}
\begin{equation}
\label{theorème de morrey}
    \forall s > \frac{d}{2} +k, \,
H^s(\mathbb{R}^d) \hookrightarrow C^k_0(\mathbb{R}^d).
\end{equation}
\paragraph{Basic convolution results}
let $(f,g) \in H^s(\mathbb{R}^d) \times L^1(\mathbb{R}^d)$, we have
\begin{equation}
\label{jenaimarrre}
  ||f \ast g ||_{H^s(\Ra^d)} \leq ||f||_{H^s(\Ra^d)}||g||_{L^{1}(\Ra^d)}.  \end{equation}
We recall a result on Fourier multipliers due to Lizorkin.
\begin{thm} \label{lizorkin}(\cite{lizorkin}).  \label{theorem5.1}
Let $\widehat{K}$ be a bounded function in $C^2(\mathbb{R}^2 \backslash \{ {0} \})$ and assume that
$$
\xi_1^{k_1}\xi^{k_2}_2\partial_1^{k_1}\partial_2^{k_2}\widehat{K}(\xi) \in L^{\infty}(\mathbb{R}^2),
$$
for any integer $0 \leq k_1,k_2 \leq 1$ such that $k_1+k_2\leq 2 $. Then, $\widehat{K}$ is a multiplier from $L^q(\mathbb{R}^2) \;\text{to}\; L^q(\mathbb{R}^2)$ for any $1<q<\infty$, i.e.  there exists a constant $C(q)$, depending only on q, such that 
$$
|| K * f ||_{L^q(\mathbb{R}^2)} \leq C(q) M( \widehat{K}) ||f||_{L^q(\mathbb{R}^2)}, \quad \forall f \in L^q(\mathbb{R}^2).
$$
where we denote
$$
M( \widehat{K})= sup \{ |\xi_1|^{k_1}|\xi_2|^{k_2}|\partial_1^{k_1}\partial_2^{k_2}\widehat{K}(\xi)|, \xi \in \mathbb{R}^2, 0\leq k_1\leq1, 0\leq k_2\leq1, 0\leq k_1+k_2\leq2  \}.
$$
\end{thm}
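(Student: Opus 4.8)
The statement is a two-dimensional multiplier theorem of Marcinkiewicz--Lizorkin type, and the plan is to deduce it from the classical Marcinkiewicz multiplier theorem through a bi-parameter Littlewood--Paley decomposition. The first step is to convert the pointwise hypotheses into the integrated (bounded-variation) conditions that actually drive the proof. Decompose $\mathbb{R}^2$ minus the coordinate axes (a null set) into the dyadic rectangles $R=I\times J$, where $I$ and $J$ range over the intervals $\pm[2^{j},2^{j+1})$. For each such rectangle and each admissible pair $(k_1,k_2)$ the hypothesis $|\xi_1|^{k_1}|\xi_2|^{k_2}|\partial_1^{k_1}\partial_2^{k_2}\widehat K|\le M(\widehat K)$, after integrating against $d\xi_1/|\xi_1|$ and $d\xi_2/|\xi_2|$ and using $\int_{2^{j}}^{2^{j+1}}dt/t=\log 2$, gives
\begin{equation}
\int_{R}\big|\partial_1^{k_1}\partial_2^{k_2}\widehat K(\xi)\big|\,d\xi \;\leq\; (\log 2)^{\,k_1+k_2}\,M(\widehat K),
\end{equation}
together with the analogous one-dimensional bounds $\int_{I}|\partial_1\widehat K(\cdot,\xi_2)|\le (\log 2)M(\widehat K)$ along horizontal edges (and the symmetric vertical version), and the crude corner bound $\sup_R|\widehat K|\le M(\widehat K)$ coming from the $(0,0)$ term. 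Hence $M(\widehat K)$ controls the Marcinkiewicz constant of $\widehat K$ uniformly over the family $\{R\}$, and it suffices to prove $L^q$-boundedness under these integrated hypotheses.

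The second step is the Littlewood--Paley framework. Let $\Delta_R f=(\chi_R\widehat f)^{\vee}$ denote the rough frequency projection onto $R$. In one dimension the projections onto dyadic intervals form a Littlewood--Paley family for every $1<q<\infty$, as a consequence of the $L^q$-boundedness of the Riesz projections; iterating this in each coordinate, in its vector-valued (two-parameter) form, yields the square-function equivalence
\begin{equation}
\Big\|\Big(\sum_{R}|\Delta_R f|^2\Big)^{1/2}\Big\|_{L^q(\mathbb{R}^2)} \;\approx\; \|f\|_{L^q(\mathbb{R}^2)}, \qquad 1<q<\infty .
\end{equation}
Since the frequency supports match, $K\ast f=\sum_R \Delta_R(K\ast f)=\sum_R T_{\chi_R\widehat K}f$, so applying the left-hand equivalence to $K\ast f$ reduces the whole theorem to the single square-function estimate $\big\|(\sum_R|T_{\chi_R\widehat K}f|^2)^{1/2}\big\|_{L^q}\lesssim M(\widehat K)\,\|f\|_{L^q}$.

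The third and decisive step treats each rectangle by summation by parts. Applying the fundamental theorem of calculus in each variable, one writes $\chi_R\widehat K$ as a corner value times $\chi_R$, plus two edge integrals of $\partial_1\widehat K$ and $\partial_2\widehat K$ against characteristic functions of subintervals, plus a double integral of $\partial_1\partial_2\widehat K$ against characteristic functions of sub-rectangles $R'\subset R$. Under the Fourier transform this realizes $T_{\chi_R\widehat K}f$ as a weighted superposition of sub-rectangle projections $\Delta_{R'}f$, the weights being exactly the derivatives above; the integrated bounds of the first step then let one factor out $M(\widehat K)$ and, by Cauchy--Schwarz in the averaging measure, dominate $(\sum_R|T_{\chi_R\widehat K}f|^2)^{1/2}$ by a fixed multiple of a (larger) Littlewood--Paley square function together with two-parameter maximal operators. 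Controlling the latter on $L^q$ via the Fefferman--Stein vector-valued inequality for the strong (bi-parameter) maximal function, valid for $1<q<\infty$, closes the estimate, and combining it with the equivalence of the second step gives $\|K\ast f\|_{L^q}\le C(q)\,M(\widehat K)\,\|f\|_{L^q}$. I expect the main obstacle to be precisely this third step: the genuinely two-parameter corner term $\partial_1\partial_2\widehat K$ must be handled with the strong maximal function rather than the ordinary Hardy--Littlewood maximal function, and one must keep the vector-valued square-function and maximal inequalities uniform over the doubly-indexed family $\{R\}$ while checking that the edge terms produce no uncontrolled logarithmic loss.
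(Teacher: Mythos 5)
The paper does not actually prove this statement: it is quoted verbatim from Lizorkin's paper \cite{lizorkin} and used as a black box, so your proposal is necessarily a different route from anything in the manuscript. On its own merits, your outline is essentially the classical bi-parameter Marcinkiewicz argument (as in Stein's or Grafakos's treatment), and it is sound: the reduction in your first step is correct, since $|\partial_1^{k_1}\partial_2^{k_2}\widehat K|\le M(\widehat K)\,|\xi_1|^{-k_1}|\xi_2|^{-k_2}$ integrates over each dyadic rectangle $R=I\times J$ (whose closure is compact in $\mathbb{R}^2\setminus\{0\}$, so the $C^2$ hypothesis and the fundamental theorem of calculus apply) to give the Marcinkiewicz bounded-variation conditions with constant $(\log 2)^{k_1+k_2}M(\widehat K)$; and the rough bi-parameter Littlewood--Paley equivalence in your second step is standard for $1<q<\infty$ via iterated Riesz projections. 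The one place where your sketch papers over a real technicality is the third step: after expanding $\chi_R\widehat K$ by the two-variable FTC and applying Cauchy--Schwarz against the measures $|\partial_1\widehat K|\,d\xi_1$, $|\partial_2\widehat K|\,d\xi_2$, $|\partial_1\partial_2\widehat K|\,d\xi$, what you must bound is a continuously indexed family of sharp partial-sum operators $S_{R'}$ inside a square function, and these are \emph{not} pointwise dominated by the strong maximal function (a rough frequency cutoff is a modulated Hilbert transform, not an averaging operator). The correct closing tool is the vector-valued inequality for the Hilbert transform (equivalently, uniform $L^q(\ell^2)$, or $L^q(L^2(d\mu))$, bounds for modulated Riesz projections), applied in each variable; the Fefferman--Stein inequality for the strong maximal function is only the right instrument for the genuinely averaging pieces. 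With that substitution your argument closes and yields $\|K\ast f\|_{L^q}\le C(q)M(\widehat K)\|f\|_{L^q}$; note also that Lizorkin's original proof proceeds differently (direct kernel estimates allowing fractional conditions), so what your route buys is a self-contained deduction from textbook Marcinkiewicz theory, at the price of slightly lossy constants, which is harmless for the paper's purposes since only the qualitative multiplier bound with constant $C(q)M(\widehat K)$ is ever used.
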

\paragraph{Existence and properties of solitary wave solutions to \eqref{KP1}
}We recall some results on the Kadomtsev-Petviashvili Equation. A ground state is a solitary wave that minimizes the action 
$$
S(\omega)=E_{KP}(\omega)+\frac{\sigma}{2}\int_{\Ra^2}\omega^2,
$$
among all non-constant solitary waves of speed $\sigma$. The constant $S_{KP}$ denotes the action $S(\omega)$ of the ground states $\omega$ of speed $\sigma=1$. We will denote by $\mathcal{G}_\sigma$ the set of the ground state of speed $\sigma$. The ground states solutions are characterized as minimizers of energy constrained by constant $L^2$-norm. Let $\mu > 0$, then the minimization problem
\begin{equation}
\label{probleme minimization}
\tag{$P_{KP}(\mu)$}
\mathcal{E}^{KP}(\mu)=\inf \left\{ E_{KP}(\omega), \omega \in Y(\mathbb{R}^2) , \int_{\mathbb{R}^2}|\omega|^2=\mu \right\},
\end{equation}
has at least one solution. Moreover there exists $\sigma$ such that the set of minima $\mathcal{E}^{KP}(\mu)$ is exactly equal to $\mathcal{G}_\sigma$ (see De Bouard and Saut \cite{bouardsaut2}). For $\sigma=1$ we have $\mu=\mu^*=3S_{KP}$. Since it was proved by making use of the concentration-compactness principle of P.L. Lions (see \cite{LionsCC}), we have the compactness of minimizing sequences.
\begin{thm}(\cite{bouardsaut2})
Let $\mu >0$, and let $(\omega_n)_{n \in\mathbb{N}}$ be a minimizing 
sequence to
\eqref{probleme minimization} 
in 
$Y(\Ra^2)$.
Then, there exists some points $(a_n)_{n \in \mathbb{N}}$ 
and a function $N \in Y(\mathbb{R}^2)$ such that up to some subsequence, 
$$
\omega_n(.-a_n) \rightarrow N\; {\rm in} \; Y(\Ra^2), \; {\rm as} \ n\rightarrow \infty.
$$
N is solution to the minimization problem \eqref{probleme minimization} and thus is a ground state for \eqref{KP1}.
\end{thm}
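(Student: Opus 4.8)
The plan is to run the concentration--compactness method of P.L. Lions on the mass-constrained problem \eqref{probleme minimization}, exactly as De Bouard and Saut do. First I would record the facts that make the variational problem well posed. The basic analytic ingredient is the anisotropic Gagliardo--Nirenberg inequality adapted to $Y(\Ra^2)$,
\begin{equation*}
\int_{\Ra^2}|\omega|^3 \leq C\,\|\omega\|_{L^2}^{3/2}\Big(\|\p_1\omega\|_{L^2}^2+\|\p_1^{-1}\p_2\omega\|_{L^2}^2\Big)^{3/4},
\end{equation*}
whose exponents are forced by the two commuting dilations $\omega\mapsto A\,\omega(Bx_1,B^2x_2)$ compatible with the KP scaling $x_2\sim x_1^2$. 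This inequality shows $E_{KP}$ is finite and bounded below under $\int\omega^2=\mu$ (the cubic term carries the subcritical power $3/4$ on the kinetic energy), hence any minimizing sequence is bounded in $Y(\Ra^2)$. I would also check $-\infty<\mathcal{E}^{KP}(\mu)<0$: the lower bound is the coercivity just described, and the negativity follows by testing with a profile rescaled by the dilation above so that the cubic term dominates for small kinetic energy.

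Next I would establish the strict subadditivity $\mathcal{E}^{KP}(\mu)<\mathcal{E}^{KP}(\alpha)+\mathcal{E}^{KP}(\mu-\alpha)$ for $0<\alpha<\mu$, which excludes dichotomy. The key is the amplitude scaling $\omega\mapsto\sqrt{t}\,\omega$, sending mass $\mu$ to $t\mu$ and energy $K-\tfrac16\int\omega^3$ to $tK-\tfrac{t^{3/2}}6\int\omega^3$. Since $\mathcal{E}^{KP}(\mu)<0$ forces $\int\omega^3>0$ along near-minimizers, for $t>1$ this yields $\mathcal{E}^{KP}(t\mu)<t\,\mathcal{E}^{KP}(\mu)$, i.e. $\mu\mapsto\mathcal{E}^{KP}(\mu)/\mu$ is strictly decreasing, whence strict subadditivity by the standard comparison.

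With these in hand I would apply the concentration--compactness alternative to the measures $|\omega_n|^2\,dx$ of total mass $\mu$, obtaining, up to a subsequence, compactness, vanishing, or dichotomy. Vanishing is ruled out by the anisotropic analogue of Lions' lemma: $\sup_y\int_{B(y,R)}|\omega_n|^2\to0$ forces $\int|\omega_n|^3\to0$, so $\liminf E_{KP}(\omega_n)\geq0$, contradicting $\mathcal{E}^{KP}(\mu)<0$. Dichotomy is ruled out by cutting $\omega_n$ into two spatially separated pieces of masses tending to $\alpha$ and $\mu-\alpha$, showing $E_{KP}(\omega_n)\geq E_{KP}(\omega_n^1)+E_{KP}(\omega_n^2)-o(1)$ and passing to the limit against strict subadditivity. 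In the compactness case there are translations $a_n$ for which $\omega_n(\cdot-a_n)$ is tight; extracting a weak $Y$-limit $N$, tightness prevents mass loss so $\int N^2=\mu$, strong local $L^3$ convergence passes to the cubic term, and weak lower semicontinuity of the kinetic part gives $E_{KP}(N)\leq\mathcal{E}^{KP}(\mu)$, so $N$ is a minimizer; matching of the kinetic norms then upgrades weak convergence to strong convergence in $Y(\Ra^2)$. Finally the Euler--Lagrange equation for \eqref{probleme minimization}, after absorbing the Lagrange multiplier into the scaling, is exactly \eqref{SW} for some speed $\sigma$, and by the variational characterization recalled above $N\in\mathcal{G}_\sigma$ is a ground state of \eqref{KP1}.

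The hard part will be the nonlocal operator $\p_1^{-1}\p_2$, which enters both the energy and the very definition of $Y(\Ra^2)$. Unlike the local $H^1$ setting, multiplying $\omega_n$ by a spatial cutoff does not localize $\p_1^{-1}\p_2\omega_n$ cleanly, so the dichotomy step and the vanishing lemma require careful commutator and truncation estimates to control the antiderivative term; this is precisely where the argument departs from the textbook concentration--compactness scheme.
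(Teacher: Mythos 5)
Your proposal is correct and follows exactly the route the paper relies on: the statement is quoted from De Bouard--Saut, whose proof is the concentration--compactness scheme of P.L.~Lions (coercivity via the anisotropic Gagliardo--Nirenberg inequality, negativity of $\mathcal{E}^{KP}(\mu)$, strict subadditivity from the amplitude scaling, exclusion of vanishing and dichotomy, then identification of the limit as a constrained minimizer and hence a ground state). The paper itself offers no independent proof, so there is nothing in it that diverges from your outline, and your flagged difficulty with the nonlocal operator $\partial_1^{-1}\partial_2$ in the truncation steps is precisely where the cited reference does the technical work.
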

Using a scaling argument it is possible to compute $\mathcal{E}^{KP}.$
\begin{lemme}(\cite{BGS2})\label{lemme 2.1} Let $N \in Y(\Ra^2)$. Given any $\sigma > 0$,
the function $N_{\sigma}(x_1,x_2)=\sigma N( \sqrt{\sigma} x_1,\sigma x_2)$
is a minimizer for $\mathcal{E}^{KP}(\sqrt{\sigma}\mu^*)$ if and only if N is a minimizer for $\mathcal{E}^{KP}(\mu^*)$. In particular we have $$\mathcal{E}^{KP}(\mu)=-\frac{\mu^3}{54S_{KP}^2}, \; \forall \mu > 0.
$$
Moreover, $N_{\sigma}$ and N are ground states for \eqref{KP1} , with respective speed $\sigma$, and 1. We have the relation $\sigma=\frac{\mu^2}{(\mu^*)^2}.$
\end{lemme}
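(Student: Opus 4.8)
The plan is to reduce the whole statement to the behaviour of $E_{KP}$ and of the $L^2$ mass under the anisotropic scaling $N \mapsto N_\sigma$, and then to transport the minimization problem across this scaling. First I would record the scaling laws by a change of variables $y_1=\sqrt{\sigma}\,x_1$, $y_2=\sigma x_2$ (Jacobian $\sigma^{3/2}$). For the mass this immediately gives $\int_{\mathbb{R}^2}N_\sigma^2=\sqrt{\sigma}\int_{\mathbb{R}^2}N^2$. For the energy the only delicate term is the nonlocal one: here I would note that $\partial_2 N_\sigma(x)=\sigma^2(\partial_2 N)(\sqrt{\sigma}x_1,\sigma x_2)$ while $\partial_1\big[\sigma^{3/2}(\partial_1^{-1}\partial_2 N)(\sqrt{\sigma}x_1,\sigma x_2)\big]=\sigma^2(\partial_2 N)(\sqrt{\sigma}x_1,\sigma x_2)$, so that $\partial_1^{-1}(\partial_2 N_\sigma)(x)=\sigma^{3/2}(\partial_1^{-1}\partial_2 N)(\sqrt{\sigma}x_1,\sigma x_2)$, the decay in $Y(\mathbb{R}^2)$ fixing the integration constant to zero. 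Combined with $\partial_1 N_\sigma=\sigma^{3/2}(\partial_1 N)(\sqrt{\sigma}x_1,\sigma x_2)$ and $N_\sigma^3=\sigma^3 N^3(\sqrt{\sigma}x_1,\sigma x_2)$, all three integrals in $E_{KP}$ pick up the \emph{same} factor after the change of variables, yielding
\[
E_{KP}(N_\sigma)=\sigma^{3/2}E_{KP}(N),\qquad \int_{\mathbb{R}^2}N_\sigma^2=\sqrt{\sigma}\int_{\mathbb{R}^2}N^2.
\]
(Alternatively the quadratic part can be checked in Fourier variables, where the symbol $\xi_1^2+\xi_2^2/\xi_1^2$ scales cleanly.) In particular $N_\sigma\in Y(\mathbb{R}^2)$, and $N\mapsto N_\sigma$ is a bijection of $Y(\mathbb{R}^2)$ whose inverse is of the same type, sending the set $\{\|N\|_{L^2}^2=\mu^*\}$ onto $\{\|M\|_{L^2}^2=\sqrt{\sigma}\mu^*\}$.

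Next I would transport the minimization. Since $N\mapsto N_\sigma$ is a constraint‑preserving bijection along which $E_{KP}$ is multiplied by the fixed positive constant $\sigma^{3/2}$, taking infima on both sides gives $\mathcal{E}^{KP}(\sqrt{\sigma}\mu^*)=\sigma^{3/2}\mathcal{E}^{KP}(\mu^*)$, and $N$ realizes the infimum at level $\mu^*$ if and only if $N_\sigma$ realizes it at level $\sqrt{\sigma}\mu^*$; this is exactly the claimed equivalence. Writing a general mass as $\mu=\sqrt{\sigma}\mu^*$, i.e. $\sigma=\mu^2/(\mu^*)^2$ (which is the asserted relation between $\sigma$ and $\mu$), the same identity reads $\mathcal{E}^{KP}(\mu)=(\mu/\mu^*)^3\,\mathcal{E}^{KP}(\mu^*)$.

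It then remains to evaluate $\mathcal{E}^{KP}(\mu^*)$. Here I would invoke De Bouard--Saut, recalled above: a minimizer $\omega$ at level $\mu^*$ is a ground state of speed $\sigma=1$, so $S(\omega)=S_{KP}$ and $\int_{\mathbb{R}^2}\omega^2=\mu^*=3S_{KP}$. From the definition $S(\omega)=E_{KP}(\omega)+\tfrac12\int_{\mathbb{R}^2}\omega^2$ I obtain $\mathcal{E}^{KP}(\mu^*)=E_{KP}(\omega)=S_{KP}-\tfrac12\mu^*=-\tfrac12 S_{KP}$. Substituting into $\mathcal{E}^{KP}(\mu)=(\mu/\mu^*)^3\mathcal{E}^{KP}(\mu^*)$ with $\mu^*=3S_{KP}$ yields $\mathcal{E}^{KP}(\mu)=-\mu^3/(54\,S_{KP}^2)$, as claimed. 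Finally, the ``moreover'' assertion follows from the scaling remark already recorded before the lemma: since $N$ solves the speed‑$1$ equation \eqref{SW}, the function $N_\sigma(x)=\sigma N(\sqrt{\sigma}x_1,\sigma x_2)$ solves the speed‑$\sigma$ solitary‑wave equation, so $N_\sigma$ and $N$ are ground states of speeds $\sigma$ and $1$ respectively.

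I expect the computation itself to be routine; the only genuinely delicate point, and the one I would treat most carefully, is the nonlocal term: correctly pinning down the scaling exponent of $\partial_1^{-1}\partial_2 N_\sigma$ and verifying that the integration constant vanishes in $Y(\mathbb{R}^2)$. The remaining subtlety is purely bookkeeping, namely converting the scaling law into the explicit cubic formula through the inputs $\mu^*=3S_{KP}$ and $S(\omega)=E_{KP}(\omega)+\tfrac12\int_{\mathbb{R}^2}\omega^2$.
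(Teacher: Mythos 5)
Your proof is correct and is exactly the scaling argument the paper alludes to ("Using a scaling argument it is possible to compute $\mathcal{E}^{KP}$") but leaves to \cite{BGS2}: the anisotropic rescaling multiplies all three terms of $E_{KP}$ by $\sigma^{3/2}$ and the $L^2$ mass by $\sqrt{\sigma}$, which transports the constrained minimization, and the normalization $\mu^*=3S_{KP}$ together with $S(\omega)=E_{KP}(\omega)+\tfrac12\int_{\mathbb{R}^2}\omega^2$ gives $\mathcal{E}^{KP}(\mu^*)=-\tfrac12 S_{KP}$ and hence the cubic formula. The one step to tighten is the final "moreover" claim: solving the speed-$\sigma$ solitary-wave equation does not by itself make $N_\sigma$ a ground state (minimality of the action is needed), so you should either note that the action also scales as $S_\sigma(N_\sigma)=\sigma^{3/2}S_1(N)$ while the rescaling bijects speed-$1$ with speed-$\sigma$ solitary waves, or invoke the De Bouard--Saut characterization that the minimizers of $\mathcal{E}^{KP}(\sqrt{\sigma}\mu^*)$ form $\mathcal{G}_{\sigma'}$ for some speed $\sigma'$ and identify $\sigma'=\sigma$ from the equation $N_\sigma$ satisfies.
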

Finally, in our proof, we will have sequences $(\omega_n)_{n \in \mathbb{N}}$ which are not minimizing sequences for \eqref{probleme minimization} but satisfy
\begin{equation}
\label{presque sequences minimisante}
    E_{KP}(\omega_n)\rightarrow \mathcal{E}^{KP}(\mu),\; {\rm and} \; \int_{\Ra^2} \omega_n^2 \rightarrow \mu, \ {\rm as} \; n \rightarrow \infty,
\end{equation}
for some positive number $\mu$.
\begin{proposition}(\cite{BGS2})\label{compacite solution KP} Let $\mu>0$, and $(\omega_n)_{n \in \mathbb{N}}$ denote a sequence of functions in $Y(\mathbb{R}^2)$ satisfying \eqref{presque sequences minimisante} for $\mu$. Then, there exists some sequence $(a_n)_{n \in \mathbb{N}}$ and a ground state solution $N$, with speed $\sigma=\frac{\mu^2}{(\mu^*)^2}$ such that, up to some subsequence,
\begin{equation}
    \omega_n(.-a_n)\rightarrow N \; {\rm in} \; Y(\Ra^2), \; {\rm as} \; n \rightarrow \infty.
\end{equation}
\end{proposition}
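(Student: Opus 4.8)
The plan is to reduce the statement to the genuine minimizing sequences already handled by the compactness theorem of de Bouard and Saut recalled above, the only defect being that $\int_{\Ra^2}\omega_n^2=\mu_n\to\mu$ rather than $=\mu$ exactly. I correct this through the scaling of Lemma \ref{lemme 2.1}. For $\lambda>0$ write $S_\lambda\omega(x_1,x_2)=\lambda\,\omega(\sqrt{\lambda}\,x_1,\lambda x_2)$. A change of variables (for the nonlocal term one works with $f=\partial_1^{-1}\omega$, equivalently on the Fourier side) shows that the three pieces of $E_{KP}$ share the same homogeneity, so that
\begin{equation}
\label{scaling identities}
\int_{\Ra^2}(S_\lambda\omega)^2=\sqrt{\lambda}\int_{\Ra^2}\omega^2,\qquad E_{KP}(S_\lambda\omega)=\lambda^{3/2}E_{KP}(\omega).
\end{equation}
Moreover $\{S_\lambda\}_{\lambda>0}$ is a one-parameter group, $S_\lambda S_{\lambda'}=S_{\lambda\lambda'}$, a fact used below.

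Setting $\mu_n=\int_{\Ra^2}\omega_n^2$ and $\lambda_n=(\mu/\mu_n)^2$, I define $\tilde\omega_n=S_{\lambda_n}\omega_n$. The first identity in \eqref{scaling identities} gives $\int_{\Ra^2}\tilde\omega_n^2=\sqrt{\lambda_n}\,\mu_n=\mu$ exactly, while the second gives $E_{KP}(\tilde\omega_n)=\lambda_n^{3/2}E_{KP}(\omega_n)$. Since $\mu_n\to\mu$ forces $\lambda_n\to1$, and $E_{KP}(\omega_n)\to\mathcal{E}^{KP}(\mu)$ by \eqref{presque sequences minimisante}, I obtain $E_{KP}(\tilde\omega_n)\to\mathcal{E}^{KP}(\mu)$. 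Hence $(\tilde\omega_n)$ is an honest minimizing sequence for \eqref{probleme minimization}, and the de Bouard--Saut compactness theorem yields translations $(a_n)$ and a minimizer $N$ of \eqref{probleme minimization} with $\tilde\omega_n(\,\cdot-a_n)\to N$ in $Y(\Ra^2)$ along a subsequence. By Lemma \ref{lemme 2.1}, $N$ is a ground state of speed $\sigma=\mu^2/(\mu^*)^2$.

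It remains to undo the rescaling. Using the commutation $\tau_aS_\lambda=S_\lambda\tau_{a'}$ with $a'=(\sqrt{\lambda}\,a_1,\lambda a_2)$, I rewrite $\tilde\omega_n(\,\cdot-a_n)=S_{\lambda_n}\big(\omega_n(\,\cdot-b_n)\big)$ with $b_n=(\sqrt{\lambda_n}\,a_{n,1},\lambda_n a_{n,2})$, so that applying $S_{1/\lambda_n}$ gives $\omega_n(\,\cdot-b_n)=S_{1/\lambda_n}\big(\tilde\omega_n(\,\cdot-a_n)\big)$. Writing $f=\partial_1^{-1}\omega$ one checks the explicit scaling of the $Y$-norm,
\begin{equation}
\label{Y norm scaling}
\|S_\lambda\omega\|_{Y}^2=\sqrt{\lambda}\,\|\omega\|_{L^2}^2+\lambda^{3/2}\big(\|\partial_1\omega\|_{L^2}^2+\|\partial_1^{-1}\partial_2\omega\|_{L^2}^2\big),
\end{equation}
which shows that $\{S_{1/\lambda_n}\}$ is uniformly bounded on $Y(\Ra^2)$ for $\lambda_n$ near $1$, and (together with density of $\partial_1\mathcal{C}^\infty_c$) that $\lambda\mapsto S_\lambda N$ is continuous at $\lambda=1$. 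Setting $g_n=\tilde\omega_n(\,\cdot-a_n)\to N$ and decomposing
\begin{equation}
S_{1/\lambda_n}g_n-N=S_{1/\lambda_n}(g_n-N)+\big(S_{1/\lambda_n}N-N\big),
\end{equation}
both terms tend to $0$ in $Y(\Ra^2)$, whence $\omega_n(\,\cdot-b_n)\to N$, which is the claim.

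I expect the only delicate point to be this last transfer step: translations and dilations do not commute, and the dilation group must be controlled near the identity in the $Y$-norm, whose nonlocal ingredient $\partial_1^{-1}\partial_2$ forces the Fourier-side computation behind \eqref{scaling identities} and \eqref{Y norm scaling} in place of a naive change of variables. Everything else is a routine reduction to the already-quoted compactness result.
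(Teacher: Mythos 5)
Your proof is correct. Note that the paper itself gives no proof of this proposition---it is quoted directly from \cite{BGS2}---and your argument (rescaling by $S_{\lambda_n}$ with $\lambda_n=(\mu/\mu_n)^2$ to restore the exact $L^2$ constraint, invoking the de Bouard--Saut compactness theorem for genuine minimizing sequences, identifying the speed via Lemma \ref{lemme 2.1}, and then transferring the convergence back using that the dilation group is uniformly bounded on $Y(\Ra^2)$ near $\lambda=1$ and strongly continuous at the identity) is precisely the standard argument behind that citation, so it matches the intended proof in both substance and structure.
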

\paragraph{Reformulation}In \cite{debouardsaut}, the authors use a new formulation for the solitary wave equation.
Applying operator $\p_1$ to \eqref{SW} we have 
$$
 \partial_1^4\omega- \Delta \omega +\frac{1}{2} \partial_1^2(\omega^2)=0.
$$
Let
 \begin{equation}
     \label{2.4}
     \widehat{K}_0(\xi)=\frac{\xi_1^2}{|  \xi|^2+\xi_1^4},\; \xi \in \mathbb{R}^2.
 \end{equation}
 and $\omega \in Y(\mathbb{R}^2) $. Then,
 $\omega$ is a solution to \eqref{SW} if and only if
 \begin{equation}
 \label{SWW}
     \omega=\frac{1}{2}K_0 \ast \omega^2.
 \end{equation}
\section{Weak convergence in $L^2$}
The aim of this section is to prove that $N_p$ and $\partial_1\theta_p$ have the same limit, and compute the convergence speed of $N_p-\partial_1 \theta_p$ towards 0.
\subsection{Rescaling and energy}
Let $(\rho_p,\phi_p)$ the solutions given by theorem \ref{existencethm}. 
As in \cite{BGS2}, we consider  rescaled functions and use anisotropic space variables. let
\begin{equation}
\varepsilon_p=\sqrt{1-c_p^2}\;\; \text{and} \;\;  \gamma=\frac{1}{g''(1)+3},
\end{equation}
\begin{equation}
\label{chgmt variable1}
    \theta_p(x_1,x_2)=-\frac{1}{\gamma\varepsilon_p\sqrt{K(1)}}\phi_p \bigg( \frac{\sqrt{K(1)}x_1}{\varepsilon_p}, \frac{\sqrt{K(1)}x_2}{\varepsilon_p^2} \bigg),
\end{equation}
and
\begin{equation}
\label{chgmt variable2}
    N_p(x_1,x_2)=-\frac{1}{\gamma\varepsilon_p^2}\eta_p \bigg( \frac{\sqrt{K(1)}x_1}{\varepsilon_p}, \frac{\sqrt{K(1)}x_2}{\varepsilon_p^2} \bigg).
\end{equation}
We can assume, up to a translation,  using \eqref{proposition 3.3}
\begin{equation}
\label{minimal}
     \exists C>0, \; N_p(0) >C, \; \forall p_0>p>0.
\end{equation}
\begin{proposition}
\label{limitecommune}
Let $(p_n)_{n \in\mathbb{N}}$ a sequence such that $p_n \rightarrow 0$. Then,  there exists $N_0 \in L^2(\mathbb{R}^2)$ such that, up to a subsequence,
\begin{equation}
\label{limiteN}
    N_{p_n} \rightharpoonup N_0 \; {\rm in} \; L^2(\mathbb{R}^2)
\end{equation}
\begin{equation}
\label{limitetheta}
    \partial_1\theta_{p_n} \rightharpoonup N_0 \; {\rm in} \; L^2(\mathbb{R}^2)
\end{equation}
Moreover there exists some positive constant $C$, not depending on p, such that
\begin{equation}
\label{Ntheta}
     \int_{\mathbb{R}^2}(N_p-\partial_1 \theta_p)^2dx \leq C \varepsilon_p^{\frac{1}{2}}.
\end{equation}
\end{proposition}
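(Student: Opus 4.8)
The plan is to establish the quantitative estimate \eqref{Ntheta} first, since the weak convergences \eqref{limiteN}--\eqref{limitetheta} to a \emph{common} limit then follow from soft compactness arguments. I would begin by collecting the a priori bounds furnished by Theorem \ref{existencethm}. The speed bound $1-\beta p^2\le c_p\le 1-\alpha p^2$ gives $\varepsilon_p^2=1-c_p^2\simeq p^2$, hence $\varepsilon_p\simeq p$ and, by \eqref{energie-moment}, $E(\rho_p,\phi_p)\simeq p\simeq \varepsilon_p$. Writing $\eta_p=\rho_p-1$ and choosing the primitive $G$ with $G(1)=0$ (so that $G(\rho)=\tfrac12(\rho-1)^2+O((\rho-1)^3)\ge 0$ near $\rho=1$), the bound \eqref{2.3} first yields the pointwise estimate $\|\eta_p\|_{L^\infty}\le C\sqrt{E}\le C\sqrt{\varepsilon_p}\to 0$; since then $\rho_p$ stays in a small neighbourhood of $1$, the three nonnegative pieces of the energy are each controlled by $E$, giving
\begin{equation*}
\int_{\Ra^2}\eta_p^2+\int_{\Ra^2}|\nabla\eta_p|^2+\int_{\Ra^2}|\nabla\phi_p|^2\;\le\;C\varepsilon_p .
\end{equation*}
Through the change of variables \eqref{chgmt variable1}--\eqref{chgmt variable2} (whose Jacobian is $K(1)/\varepsilon_p^3$), these transfer to $\|N_p\|_{L^2}^2=\tfrac{1}{\gamma^2K(1)\varepsilon_p}\int\eta_p^2\le C$ and $\|\partial_1\theta_p\|_{L^2}^2\le\tfrac{1}{\gamma^2K(1)\varepsilon_p}\int|\nabla\phi_p|^2\le C$, uniformly in $p$. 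Hence along $(p_n)$ both sequences are bounded in $L^2(\Ra^2)$ and, being in a reflexive space, admit weakly convergent subsequences; say $N_{p_n}\rightharpoonup N_0$. By the same Jacobian, \eqref{Ntheta} is equivalent to the original-variable estimate $\int_{\Ra^2}(\eta_p-\partial_1\phi_p)^2\le C\varepsilon_p^{3/2}$.

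The heart of the argument is an \emph{algebraic} identity for $E-P$. Using $P=\int\eta_p\,\partial_1\phi_p$ (from \eqref{moment} with constant state $1$), the splitting $\rho_p|\nabla\phi_p|^2=|\nabla\phi_p|^2+\eta_p|\nabla\phi_p|^2$, the pointwise completion of squares $\tfrac12(\partial_1\phi_p)^2+\tfrac12\eta_p^2-\eta_p\partial_1\phi_p=\tfrac12(\eta_p-\partial_1\phi_p)^2$, and $G(\rho_p)=\tfrac12\eta_p^2+\bigl(G(\rho_p)-\tfrac12\eta_p^2\bigr)$, I would rewrite
\begin{align*}
E-P=\int_{\Ra^2}\Big[\;&\tfrac12 K(\rho_p)|\nabla\rho_p|^2+\tfrac12(\eta_p-\partial_1\phi_p)^2+\tfrac12(\partial_2\phi_p)^2\\
&+\tfrac12\,\eta_p|\nabla\phi_p|^2+\bigl(G(\rho_p)-\tfrac12\eta_p^2\bigr)\Big]\,dx .
\end{align*}
This is purely pointwise algebra followed by integration, so no integration by parts and no boundary terms are involved; it holds exactly because all the integrals converge for the smooth finite-energy profiles of Theorem \ref{existencethm}.

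I would then solve the identity for the square term and discard the two manifestly nonnegative contributions $\tfrac12K(\rho_p)|\nabla\rho_p|^2\ge0$ and $\tfrac12(\partial_2\phi_p)^2\ge0$, which only increases the right-hand side:
\begin{equation*}
\tfrac12\int_{\Ra^2}(\eta_p-\partial_1\phi_p)^2\;\le\;(E-P)+\tfrac12\Big|\int_{\Ra^2}\eta_p|\nabla\phi_p|^2\Big|+\Big|\int_{\Ra^2}\bigl(G(\rho_p)-\tfrac12\eta_p^2\bigr)\Big|.
\end{equation*}
By \eqref{energie-moment}, $E-P=E-p\le-\alpha p^3\le 0$, so the leading term is harmless. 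The two remainders are cubic in $\eta_p$, and the crude bounds already in hand close the estimate:
\begin{equation*}
\Big|\int\eta_p|\nabla\phi_p|^2\Big|\le\|\eta_p\|_{L^\infty}\!\int|\nabla\phi_p|^2\le C\varepsilon_p^{3/2},\qquad \Big|\int\bigl(G(\rho_p)-\tfrac12\eta_p^2\bigr)\Big|\le C\|\eta_p\|_{L^\infty}\!\int\eta_p^2\le C\varepsilon_p^{3/2}.
\end{equation*}
Thus $\int(\eta_p-\partial_1\phi_p)^2\le C\varepsilon_p^{3/2}$, and multiplying by $\tfrac{1}{\gamma^2K(1)\varepsilon_p}$ gives exactly \eqref{Ntheta}.

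Finally, since $\varepsilon_p\to0$, \eqref{Ntheta} shows $N_{p_n}-\partial_1\theta_{p_n}\to0$ \emph{strongly} in $L^2$; combined with $N_{p_n}\rightharpoonup N_0$ this forces $\partial_1\theta_{p_n}=N_{p_n}-(N_{p_n}-\partial_1\theta_{p_n})\rightharpoonup N_0$, which is \eqref{limiteN}--\eqref{limitetheta}. The main obstacle---and the reason for routing through the energy--momentum functional rather than reading $\eta_p-\partial_1\phi_p$ directly off the momentum equation of \eqref{E-K simple}---is that the latter route would require $L^2$ control of $\Delta\rho_p$ and of $|\nabla\phi_p|^2$, i.e.\ higher-order Sobolev bounds that are not yet available at this stage. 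The identity sidesteps this entirely: it only ever asks one to drop nonnegative terms and to bound cubic quantities, for which $\|\eta_p\|_{L^\infty}\le C\sqrt{\varepsilon_p}$ and $\int|\nabla\phi_p|^2\le C\varepsilon_p$ suffice, while the sharp negativity $E-P\le-\alpha p^3$ of \eqref{energie-moment} guarantees the leading balance.
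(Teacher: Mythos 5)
Your proof is correct, and while it rests on the same two pillars as the paper's argument --- the sign information $E(\rho_p,\phi_p)-p\le -\alpha p^3\le 0$ from \eqref{energie-moment}, and a completion of the square that produces $\int(\eta_p-\partial_1\phi_p)^2$ (equivalently, after the anisotropic rescaling, $\int(N_p-\partial_1\theta_p)^2$) --- the execution is genuinely leaner. The paper first expands the energy in the rescaled variables (lemma \ref{energie nouvelle fonction}), writing $E=\tfrac{K(1)\gamma^2\varepsilon_p}{2}\big(E_0+\varepsilon_p^2E_2+\varepsilon_p^4E_4\big)$, and must then control $E_2$ and $E_4$; for that it invokes the Pohozaev identities of \cite{audiard2017small} (lemma \ref{pohozaev lemma}) to bound $\int(\partial_1\rho_p)^2$, $\int(\partial_2\rho_p)^2$, $\int(\partial_2\phi_p)^2$ by $Cp^3$, together with the cubic estimates \eqref{etad1phi}, \eqref{etad2phi}, before concluding in section \ref{pifpafpouf} exactly as you do, via $E_0-2\int N_p\partial_1\theta_p\le \tfrac{2(E-p)}{\gamma^2K(1)\varepsilon_p}+C\varepsilon_p^{1/2}$. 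Your pointwise identity
\begin{equation*}
E-P=\int_{\Ra^2}\tfrac12 K(\rho_p)|\nabla\rho_p|^2+\tfrac12(\eta_p-\partial_1\phi_p)^2+\tfrac12(\partial_2\phi_p)^2+\tfrac12\eta_p|\nabla\phi_p|^2+\bigl(G(\rho_p)-\tfrac12\eta_p^2\bigr)
\end{equation*}
short-circuits this machinery: the derivative terms are \emph{dropped} as nonnegative rather than estimated, and the only quantities requiring bounds are the two cubic remainders, for which $\|\eta_p\|_{L^\infty}\le C\sqrt{\varepsilon_p}$ (from \eqref{2.3} and $E\simeq p\simeq\varepsilon_p$) and the energy coercivity $\int\eta_p^2+\int|\nabla\phi_p|^2\le C\varepsilon_p$ suffice. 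What each approach buys: yours is self-contained and makes transparent that \eqref{Ntheta} needs neither Pohozaev identities nor any control of $\nabla\rho_p$ or $\partial_2\phi_p$; the paper's detour is an investment rather than a necessity, since the Pohozaev bounds \eqref{cool}, \eqref{coool} and the $E_2$, $E_4$ expansion are re-used later (proposition \ref{controle reste}, lemma \ref{6.3}, and the strong-convergence argument of section \ref{patate}), so they must be developed somewhere in any case. Your bookkeeping for the weak limits (Banach--Alaoglu for $N_{p_n}$, then transferring the limit to $\partial_1\theta_{p_n}$ through the strong $L^2$ smallness of the difference) coincides with the paper's.
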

\begin{rem}
In section \ref{patate}, we will prove that $N_0$ is a ground state of \eqref{KP1}.
\end{rem}
\begin{proof}[Proof of \eqref{limiteN} an \eqref{limitetheta}]
Since $G''(1)=g'(1)=1$, $G'(1)=g(1)=G(1)=0$, there exists $\delta>0$ such that $G(\rho) \geq \frac{(\rho-1)^2}{3}$  for any $\rho \in ]1+\delta, 1-\delta[$. Then for $p$ small enough, using \eqref{2.3} and the definition \eqref{energie E K}, we have
\begin{equation}
\label{1.10}
    \int_{\mathbb{R}^2} (\eta_p \big(
x_1, x_2 \big))^2dx \leq M E(\rho_p,\phi_p).
\end{equation}
Thus, we deduce from \eqref{energie-moment} and \eqref{epsilon} that
\begin{equation}
\label{controle de N  en normel L2}
\begin{split}
\int_{\mathbb{R}^2}(N_p)^2 & = \frac{1}{\gamma^2 \varepsilon_p^4}\int_{\mathbb{R}^2} \bigg(\eta_p \bigg(
\frac{\sqrt{K(1)}x_1}{\varepsilon_p}, \frac{\sqrt{K(1)}x_2}{\varepsilon_p^2} \bigg)\bigg)^2 \\
&\leq \frac{1}{K(1)\gamma^2\varepsilon_p}\int_{\mathbb{R}^2} (\eta_p \big(
x_1, x_2 \big))^2
\\
& \leq \frac{1}{\varepsilon_p} C E(\rho_p,\phi_p) \qquad  \\
& \leq C.
\end{split}
\end{equation}
Then, using Banach-Alaoglu theorem, there exists a function $N_0 \in L^2(\mathbb{R}^2)$ such that, up to some subsequence  \begin{equation}
    N_{p_n} \rightharpoonup  N_0 \; \text{in} \; L^2(\mathbb{R}^2).
\end{equation}
The convergence of $\partial_1\theta_{p_n}$ is a consequence of \eqref{Ntheta}.
In order to complete the proof of proposition \ref{limitecommune}, it only remains to prove \eqref{Ntheta}. This requires to use rescaled energy and Pohozaev estimates, so that \eqref{Ntheta} is postponed to section 3.3.
\end{proof}
\begin{lemme}\label{energie nouvelle fonction}
The energy can be expressed
in terms of the new functions as 
\begin{align*}
E(\rho_p,\phi_p)=\frac{K(1)\gamma^2\varepsilon_p}{2} \bigg(E_0(N_p, \theta_p) +\varepsilon_p^2 (E_2(N_p,\theta_p)))+
  \varepsilon_p^4 (E_4(N_p,\theta_p)) \bigg),
\end{align*}
with
$$E_0(N_p, \theta_p)=\int_{\mathbb{R}^2}  N^2_p+(\partial_1 \theta_p)^2, $$

\begin{align*}
E_2(N_p, \theta_p)=2 \bigg(\int_{\mathbb{R}^2}    \frac{(\partial_1 N_p(x))^2}{2}+\frac{(\partial_2 \theta_p (x))^2}{2}- \frac{\gamma}{6}\big( 3N_p(\partial_1 \theta_p)^2+ g''(1)N_p^3)dx\bigg),
\end{align*}

\begin{align*}
E_4(N_p,\theta_p)=\int_{\mathbb{R}^2}& K\bigg(1-\gamma\varepsilon_p^2N_p (x) \bigg) \frac{1}{\sqrt{K(1)}^2}(\partial_2N_p(x))^2-
\gamma N_p(x)(\partial_2 \theta_p (x))^2\\&+N_p(x) j_p(x)\bigg( 
 (\partial_1N_p(x))^2 + ( \varepsilon_p)^2 (\partial_2 N_p(x))^2 \bigg)\\&+\gamma^4  N_p^4(x)l_p(x) dx,
\end{align*}
with $j_p$ and $l_p$ some functions smooth and bounded in $L^{\infty}$ uniformly in $p$. 
For the momentum we have
$$
p=\varepsilon_p\gamma^2K(1)\int_{\Ra^2}N_p(x)\partial_1\theta_p(x)dx.
$$
\end{lemme}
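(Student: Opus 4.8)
The plan is purely computational: I would invert the rescaling to express $(\rho_p,\phi_p)$ through $(N_p,\theta_p)$, carry out the induced anisotropic change of variables in the integrals defining $E$ and $P$, and then reorganize everything by powers of $\varepsilon_p$ after Taylor-expanding $K$ and $G$ about the constant state $1$.

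First I would write $z_1 = \sqrt{K(1)}\,x_1/\varepsilon_p$, $z_2 = \sqrt{K(1)}\,x_2/\varepsilon_p^2$ for the physical variables, so that \eqref{chgmt variable1}--\eqref{chgmt variable2} read $\eta_p(z) = -\gamma\varepsilon_p^2 N_p(x)$ and $\phi_p(z) = -\gamma\varepsilon_p\sqrt{K(1)}\,\theta_p(x)$. The chain rule then gives $\partial_{z_1}\eta_p = -\gamma\varepsilon_p^3 K(1)^{-1/2}\partial_1 N_p$, $\partial_{z_2}\eta_p = -\gamma\varepsilon_p^4 K(1)^{-1/2}\partial_2 N_p$, $\partial_{z_1}\phi_p = -\gamma\varepsilon_p^2\partial_1\theta_p$, $\partial_{z_2}\phi_p = -\gamma\varepsilon_p^3\partial_2\theta_p$, while the volume element transforms as $dz = K(1)\varepsilon_p^{-3}\,dx$. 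Substituting into \eqref{energie E K} and pulling out the common factor $\tfrac12 K(1)\gamma^2\varepsilon_p$, I expect the kinetic part $\tfrac12\rho_p|\nabla_z\phi_p|^2$ to give $\tfrac12 K(1)\gamma^2\varepsilon_p(1-\gamma\varepsilon_p^2 N_p)[(\partial_1\theta_p)^2+\varepsilon_p^2(\partial_2\theta_p)^2]$, the capillary part $\tfrac12 K(\rho_p)|\nabla_z\rho_p|^2$ to give $\tfrac12 K(1)\gamma^2\varepsilon_p\cdot\varepsilon_p^2\,K(\rho_p)K(1)^{-1}[(\partial_1 N_p)^2+\varepsilon_p^2(\partial_2 N_p)^2]$, and the potential part to be handled through $G(1+s)=\tfrac12 s^2+\tfrac{g''(1)}{6}s^3+\tfrac{g'''(1)}{24}s^4+\cdots$ (legitimate since $G(1)=G'(1)=0$, $G''(1)=1$) with $s=-\gamma\varepsilon_p^2 N_p$, producing $K(1)\varepsilon_p^{-3}G(\rho_p)=\tfrac12 K(1)\gamma^2\varepsilon_p[N_p^2-\tfrac{g''(1)\gamma}{3}\varepsilon_p^2 N_p^3+O(\varepsilon_p^4)]$.

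Collecting by order, the $\varepsilon_p^0$ terms assemble into $E_0=\int N_p^2+(\partial_1\theta_p)^2$; the $\varepsilon_p^2$ terms reproduce $E_2$ exactly, with $(\partial_1 N_p)^2$ and $(\partial_2\theta_p)^2$ coming from the gradient contributions and the cubic terms $-\gamma N_p(\partial_1\theta_p)^2$ and $-\tfrac{\gamma g''(1)}{3}N_p^3$ coming respectively from the factor $(1-\gamma\varepsilon_p^2 N_p)$ and from the cubic term of $G$; the leftover $\varepsilon_p^4$ terms constitute $E_4$, where I would \emph{define} the smooth bounded functions $j_p$, $l_p$ so as to absorb, respectively, the remainder of $K(\rho_p)/K(1)-1$ (factored by $\gamma\varepsilon_p^2 N_p$) and the super-cubic remainder of $G$. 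The momentum is then immediate: $(\rho_p-1)\partial_{z_1}\phi_p=\gamma^2\varepsilon_p^4 N_p\partial_1\theta_p$, whence $p=P(\rho_p,\phi_p)=\varepsilon_p\gamma^2 K(1)\int N_p\partial_1\theta_p\,dx$ after applying the Jacobian, with no integration by parts required.

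The one point that is more than bookkeeping is the claimed uniform-in-$p$ smoothness and boundedness of $j_p$ and $l_p$. The hard part is therefore to justify the Taylor expansions uniformly: this rests on $\|\rho_p-1\|_\infty\to 0$, which follows from \eqref{2.3} together with $E(\rho_p,\phi_p)\to 0$ by \eqref{energie-moment}. Consequently $\rho_p$ takes values in a fixed compact neighborhood of $1$ on which $K>0$ and $G$ are smooth, so the integral-form remainders define $j_p,l_p$ as smooth functions of $\rho_p$ bounded independently of $p$. Apart from this, the only remaining difficulty is keeping the anisotropic scaling factors straight and matching each expansion term to its slot in $E_0,E_2,E_4$.
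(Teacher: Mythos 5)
Your proposal is correct and follows essentially the same route as the paper: a direct change of variables using the rescaling \eqref{chgmt variable1}--\eqref{chgmt variable2} with the anisotropic Jacobian $K(1)\varepsilon_p^{-3}$, Taylor expansion of $G$ and $K$ about $1$ with $j_p$ and $l_p$ defined as the remainder terms, and uniform boundedness of these remainders deduced from $\|\rho_p-1\|_{\infty}\ll 1$ via \eqref{2.3} and \eqref{energie-moment}. Your write-up is in fact slightly more careful than the paper's (which only sketches the expansion of $G$ and leaves the order-by-order collection as a ``direct but tedious computation''), and your identification of the momentum formula without integration by parts matches the paper exactly.
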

\begin{proof}Since $N_p\simeq \p_1 \theta_p$, then passing to the limit in $p$ we have $E_0(N_p,\theta_p)\simeq2 \int_{\Ra^2}N_0^2$ and $E_2(N_p,\theta_p)\simeq 2 E_{KP}(N_0)$. Later, to prove that the weak limit $N_0$  is a
solution to \eqref{SW},  we will use \eqref{probleme minimization}.
This is a direct but tedious computation. The functions $j_p$ and $l_p$ are given by
\begin{align*}
G(1+x)&=G(1)+ G'(1)x+G''(1)\frac{x^2}{2}+x^3l(x) \\
&=\frac{x^2}{2}+ x^3 l(x).
\end{align*}
So
$$
G(\rho_p(x))=\gamma^2\frac{1}{2} \varepsilon_p^4N^2_p(x')-\gamma^3\varepsilon_p^6 \frac{g''(1)N_p^3(x')}{6}+\gamma^4 \varepsilon_p^8 N_p^4(x')l(-\varepsilon^2\gamma N_p(x')),
$$
where $l$ is the third order remainder of Taylor expansion and $$x'=\bigg( \frac{\varepsilon_p x_1}{\sqrt{K(1)}}, \frac{\varepsilon_p^2x_2}{\sqrt{K(1)}} \bigg).$$
In view of \eqref{2.3},  the function $l(-\varepsilon_p^2\gamma N_p(x'))=l(\rho_{p}(x)-1)$ is bounded independently of p. To simplify we write
$$
G(\rho_p(x))=\gamma^2\frac{1}{2} \varepsilon_p^4N^2_p(x')-\gamma^3\varepsilon_p^6 \frac{g''(1)N_p^3(x')}{6}+\gamma^4 \varepsilon_p^8 N_p^4(x')l_p(x').
$$
Similarly, we write
$$
K\bigg(1-\gamma\varepsilon_p^2N_p (x') \bigg)=K(1)- \varepsilon_p^2N_p(x') j_p(x').
$$
\end{proof}
\subsection{Pohozaev’s identities}
\label{pouuuuuuuuuuuuuuuuuuuuuuuuuuuf}
We estimate now derivative terms in the energy using Pohozaev's identities.  
\begin{lemme}\label{pohozaev lemma}
\begin{equation}
    \left|\int_{\Ra^2}(\partial_2\phi_p)^2+  (\partial_2\rho_p)^2+  (\partial_1\rho_p)^2dx\right| \leq C\varepsilon_p^3,
\end{equation}
and
\begin{equation}
      \left|\int_{\Ra^2} \eta_p (\partial_1\phi_p)^2dx \right|\leq C \varepsilon_p^{\frac{3}{2}}, \; \left| \int_{\Ra^2}\eta_p (\partial_2 \phi_p)^2dx \right| \leq C \varepsilon_p^{\frac{7}{2}}.
\end{equation}

\end{lemme}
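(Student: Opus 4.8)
The plan is to reduce everything to three scalar identities relating the quadratic pieces of the energy. Write $A_i=\int_{\Ra^2}K(\rho_p)(\p_i\rho_p)^2$, $B_i=\int_{\Ra^2}\rho_p(\p_i\phi_p)^2$ for $i=1,2$ and $C=\int_{\Ra^2}G(\rho_p)$, so that $E(\rho_p,\phi_p)=\tfrac12(A_1+A_2+B_1+B_2)+C$ and $P(\rho_p,\phi_p)=p$. All integrations by parts below are legitimate because $(\rho_p,\phi_p)\in\cap_{j}(1+H^j)\times\dot H^{j+1}$, so that $\eta_p=\rho_p-1$ and $\nabla\phi_p$, together with all their derivatives, decay at infinity and the boundary terms vanish.

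First I would produce the three identities. Multiplying the continuity equation (first line of \eqref{E-K simple}) by $\phi_p$ and integrating by parts gives $c_p\,P(\rho_p,\phi_p)-\int_{\Ra^2}\rho_p|\nabla\phi_p|^2=0$, that is
\[
B_1+B_2=c_p\,p.
\]
The other two come from Pohozaev identities: since $(\rho_p,\phi_p)$ solves \eqref{E-K simple} it is a critical point of $E-c_pP$, so differentiating $E$ and $P$ along the anisotropic dilations $(x_1,x_2)\mapsto(\rho_p,\phi_p)(\lambda_1x_1,\lambda_2x_2)$ at $\lambda=(1,1)$ (equivalently, testing the two equations against $x_i\p_i\rho_p$ and $x_i\p_i\phi_p$) yields two relations among the $A_i$, $B_i$, $C$ and $P$. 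Combined with the expression of $E$ above, they give $\int_{\Ra^2}G(\rho_p)=\tfrac12 c_pp$ and, crucially,
\[
A_1+B_1=E(\rho_p,\phi_p),\qquad A_2+B_2=E(\rho_p,\phi_p)-c_pp.
\]

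Next I would feed in the asymptotics of Theorem \ref{existencethm}. From the bounds on $c_p$ and \eqref{epsilon} one has $\varepsilon_p\simeq p$, and from \eqref{energie-moment} together with $c_p\ge1-\beta p^2$ one gets $0\le E-c_pp\le C\varepsilon_p^3$, the non-negativity being forced by $A_2+B_2\ge0$. Hence $A_2+B_2=O(\varepsilon_p^3)$, so each of $A_2$ and $B_2$ is $O(\varepsilon_p^3)$; then $A_1=(A_1+B_1)-B_1=E-(c_pp-B_2)=(E-c_pp)+B_2=O(\varepsilon_p^3)$, using the first and third identities. Since $\|\rho_p-1\|_\infty\le C\sqrt{E}\to0$ by \eqref{2.3}, the weights $K(\rho_p)$ and $\rho_p$ stay bounded below, so $\int(\p_i\rho_p)^2\le C A_i$ and $\int(\p_i\phi_p)^2\le C B_i$; this turns $A_1,A_2,B_2=O(\varepsilon_p^3)$ into the first estimate $\int_{\Ra^2}(\p_2\phi_p)^2+(\p_2\rho_p)^2+(\p_1\rho_p)^2\le C\varepsilon_p^3$.

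The second estimate is then immediate by Hölder: $|\int_{\Ra^2}\eta_p(\p_i\phi_p)^2|\le\|\eta_p\|_\infty\int_{\Ra^2}(\p_i\phi_p)^2$, with $\|\eta_p\|_\infty\le C\sqrt E=O(\varepsilon_p^{1/2})$ from \eqref{2.3}. Since $\int_{\Ra^2}(\p_1\phi_p)^2\le CB_1\le C c_pp=O(\varepsilon_p)$ and $\int_{\Ra^2}(\p_2\phi_p)^2\le CB_2=O(\varepsilon_p^3)$, these give the bounds $C\varepsilon_p^{3/2}$ and $C\varepsilon_p^{7/2}$ respectively. The one genuinely delicate point is the rigorous derivation of the Pohozaev identities, namely justifying the dilation computation (or the $x_i\p_i$ multipliers and the integrability of $x_i$ times the decaying integrands); the conceptual crux is that the extra identity $B_1+B_2=c_pp$ is exactly what decouples $A_1$ from the large kinetic term $B_1$ and exposes its true size $O(\varepsilon_p^3)$.
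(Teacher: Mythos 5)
Your proposal is correct and follows essentially the same route as the paper: your three identities $A_1+B_1=E$, $A_2+B_2=E-c_p p$ and $B_1+B_2=c_p p$ are precisely the Pohozaev and momentum identities that the paper quotes from \cite{audiard2017small} (Proposition 5.1 and identity \eqref{5.4}), and the remaining steps — combining them with \eqref{energie-moment}, $\varepsilon_p\simeq p$, the uniform lower bounds on $K(\rho_p)$ and $\rho_p$ coming from \eqref{2.3}, and then H\"older with $\|\eta_p\|_\infty\leq C\sqrt{E}=O(\varepsilon_p^{1/2})$ — reproduce the paper's argument. The only deviation is that you sketch a re-derivation of the Pohozaev identities by an anisotropic dilation argument (flagging its justification as the delicate point), whereas the paper simply cites them, so no further comparison is needed.
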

\begin{proof}
Thanks to \eqref{energie-moment}, we have 
$$
|E(\rho_p,\phi_p)-P(\rho_p,\phi_p)|<CP(\rho_p,\phi_p)^3.
$$
We recall Pohozaev's identities obtained in  \cite{audiard2017small} proposition 5.1
\begin{equation}
   E(\rho_p,\phi_p)= \int_{\Ra^2}\rho_p(\partial_2\phi_p)^2+ K(\rho_p) (\partial_2\rho_p)^2dx + c_pP(\rho_p,\phi_p),
\end{equation}
\begin{equation}
\label{pohozaev2}
    E(\rho_p,\phi_p)= \int_{\Ra^2} \rho_p(\partial_1\phi_p)^2+K(\rho_p)(\partial_1\rho_p)^2dx.
\end{equation}
Moreover we have 
\begin{equation}
\label{5.4}
    c_pP(\rho_p,\phi_p)=\int_{\Ra^2} \rho_p (\nabla \phi_p)^2dx.
\end{equation}
It follows from   \eqref{energie-moment} and \eqref{epsilon} that
\begin{align*}
   \left|\int_{\Ra^2}\rho_p(\partial_2\phi_p)^2+ K (\partial_2\rho_p)^2dx\right| &=|E(\rho_p,\phi_p)-cP(\rho_p,\phi_p)| \\
   &=|E(\rho_p,\phi_p)-P(\rho_p,\phi_p)+(1-\sqrt{1-\varepsilon_p^2})P(\rho_p,\phi_p)|
   \\&\leq |E(\rho_p,\phi_p)-P(\rho_p,\phi_p)|+\varepsilon_p^2\left|\frac{1-\sqrt{1-\varepsilon_p^2}}{\varepsilon_p^2} P(\rho_p,\phi_p)\right|
   \\&\leq CP(\rho_p,\phi_p)^3   ,
\end{align*}
and therefore using
$||\rho_p - 1||_{\infty} \rightarrow 0$ when $p \rightarrow 0$ we get
\begin{equation}
\label{f}
\left|\int_{\Ra^2}(\partial_2\phi_p)^2+  (\partial_2\rho_p)^2dx\right| \leq C P(\rho_p,\phi_p)^3.
\end{equation}
In view of (\ref{pohozaev2}) and (\ref{5.4}) we obtain

$$   \left| \int_{\Ra^2} K(\rho_p) (\partial_1\rho_p)^2dx \right| =\left|\int \rho_p (\partial_2\phi_p)^2dx+E(\rho_p,\phi_p)-c P(\rho_p, \phi_p)\right|,
$$
and then 
\begin{equation}
\label{e}
    \left| \int_{\Ra^2} (\partial_1\rho_p)^2dx \right| \leq C P(\rho_p,\phi_p)^3.
\end{equation}
Finally combining \eqref{e} and \eqref{f} we have
\begin{equation}
\label{cool}
\left|\int_{\Ra^2}(\partial_2\phi_p)^2+  (\partial_2\rho_p)^2+  (\partial_1\rho_p)^2dx\right| \leq C P(\rho_p,\phi_p)^3.
\end{equation}
Moreover by \eqref{pohozaev2}
\begin{equation}
\label{5.1}
    E(\rho_p,\phi_p)=\int_{\Ra^2} \rho_p(\partial_1\phi_p)^2+K(\rho_p)(\partial_1\rho_p)^2dx,
\end{equation}
thus
$$
\int_{\Ra^2} (\partial_1\phi_p)^2dx \leq C E(\rho_p, \phi_p).
$$
Combining \eqref{5.1} with \eqref{2.3}, we obtain
\begin{equation}
\label{etad1phi}
    \left|\int_{\Ra^2} \eta_p (\partial_1\phi_p)^2dx \right|\leq C \varepsilon_p^{\frac{3}{2}}.
\end{equation}
Similarly
\begin{equation}
\label{etad2phi}
     \left| \int_{\Ra^2}\eta_p (\partial_2 \phi_p)^2dx \right| \leq C \varepsilon_p^{\frac{7}{2}},
\end{equation}
this ends the proof of lemma \ref{pohozaev lemma}. 
\end{proof}
\subsection{Energy estimates}\label{pifpafpouf}
We are now in position to conclude the proof of proposition \ref{limitecommune}.
\begin{proof}[End of proof of proposition \ref{limitecommune}]First we have
\begin{align*}
&\int_{\Ra^2} \big(\partial_1N_p\big)^2= \frac{1}{\gamma^2\varepsilon_p^3}\int_{\Ra^2}(\partial_1 \rho_p)^2,&
 \int_{\Ra^2} \big(\partial_2\theta_p\big)^2= \frac{1}{K(1)\gamma^2\varepsilon_p^3}\int_{\Ra^2}(\partial_2 \phi_p)^2,\\&\int_{\Ra^2} \big(\partial_2N_p\big)^2= \frac{1}{\gamma^2\varepsilon_p^5}\int_{\Ra^2}(\partial_2 \rho_p)^2,&
\int_{\Ra^2} N_p(\partial_1 \theta_p)^2=- \frac{1}{K(1)\gamma^3\varepsilon_p^3}\int \eta_p (\partial_1 \phi_p)^2,\\&\int_{\Ra^2} N_p^3=-\frac{1}{\gamma^3K(1)\varepsilon_p^3}   \int_{\Ra^2} \eta_p^3,& \int_{\Ra^2}N_p(\partial_2 \theta_p)^2=- \frac{1}{\gamma^3\varepsilon_p^5K(1)} \int_{\Ra^2} \eta_p (\partial_2 \phi_p)^2 .
\end{align*}
Using  \eqref{cool} and \eqref{epsilon},  we have 
\begin{equation}
    \label{coool}
    \int_{\Ra^2} \big(\partial_2\theta_p\big)^2+\big(\partial_1N_p\big)^2 \leq C.
\end{equation}
Then with \eqref{2.3}, \eqref{1.10} and \eqref{etad1phi} we obtain
\begin{equation}
\label{E_1}
    E_2(N_p,\theta_p) \leq \frac{C}{\varepsilon_p^{\frac{3}{2}}}.
\end{equation}
Similarly, using \eqref{etad2phi}, \eqref{cool}, \eqref{2.3} and by definition of $E_4$, we have
 \begin{equation}
     E_4(N_p,\theta_p) \leq  \frac{C}{\varepsilon_p^3}.
 \end{equation}
We deduce that 
\begin{equation}
\begin{split}
\bigg|E(\rho_p,\phi_p)- \frac{\gamma^2\varepsilon_pK(1)}{2}E_0(\rho_p,\phi_p) \bigg|&=
\bigg|E(\rho_p,\phi_p)- \frac{\gamma^2\varepsilon_pK(1)}{2}\int_{\Ra^2}(N_p)^2+(\partial_1\theta_p)^2 \bigg|\\ &\leq C \varepsilon_p^\frac{3}{2}.
\end{split}
\end{equation}
Thus, by \eqref{energie-moment} we have
\begin{align*}
    \int_{\Ra^2}(N_p-\partial_1 \theta_p)^2&= E_0(N_p,\theta_p)- 2 \int_{\Ra^2} N_p\partial_1\theta_p \\& \leq \frac{2E(\rho_p,\phi_p)}{\gamma^2
    K(1)\varepsilon_p} -\frac{2}{\varepsilon_p\gamma^2
    K(1)} p +C \varepsilon_p^\frac{1}{2}\\
    &\leq C_1 \varepsilon_p^{\frac{1}{2}}.
\end{align*}
This completes the proof of estimate \eqref{Ntheta} and Proposition \ref{limitecommune}. 
\end{proof}
\section{Elliptic estimates on $\theta_p$ }
In the present section, we aim to prove that $N_p$ bounds $\theta_p$ in Sobolev spaces. More precisely, we prove the following proposition:
\begin{proposition}
\label{lemme 4.6}
Let $1 < q < \infty$ there exists a constant $ C(q)$ depending on q, but not on $p$, such that
\begin{equation}
\label{controle theta}
    ||\partial_1 \theta_p ||_{L^q(\mathbb{R}^2)} +
\varepsilon_p|| \partial_2 \theta_p ||_{L^q(\mathbb{R}^2)} \leq C(q) ||N_p||_{L^q(\mathbb{R}^2)},
\end{equation}
for p small enough.
Moreover for any $\alpha \in \mathbb{N}^2$ let
$$
\Gamma_p(q,\alpha)= ||\partial^{\alpha} \partial_1 \theta_p ||_{L^q(\mathbb{R}^2)}+\varepsilon_p || \partial^{\alpha} \partial_2 \theta_p||_{L^q(\mathbb{R}^2)},
$$
then there exists $C(q,\alpha)$ such that
\begin{equation}
\label{4.26}
    \Gamma_p(q,\alpha) \leq C(q, \alpha)  \bigg(|| \partial^{\alpha}N_p||_{L^q(\mathbb{R}^2)} + \varepsilon_p^2 \sum_{0 \leq \beta < \alpha}|| \partial^{\beta} N_p||_{L^{\infty}(\mathbb{R}^2)} \Gamma_p(q, \alpha- \beta) \bigg).
\end{equation}
\end{proposition}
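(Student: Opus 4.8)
The plan is to extract from the continuity equation (the first line of \eqref{E-K simple}) an \emph{exact} divergence-form relation between $\theta_p$ and $N_p$, and then to invert the anisotropic Laplacian by Fourier multipliers whose $L^q\to L^q$ norms are controlled \emph{uniformly in} $\varepsilon_p$ via Lizorkin's Theorem \ref{lizorkin}. First I would rewrite $\mathrm{div}(\rho_p\nabla\phi_p)=c_p\partial_1\rho_p$ in the form $\Delta\phi_p+\partial_1(\eta_p\partial_1\phi_p)+\partial_2(\eta_p\partial_2\phi_p)=c_p\partial_1\eta_p$ and insert the rescalings \eqref{chgmt variable1}--\eqref{chgmt variable2}. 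A direct computation then gives, with $L:=\partial_1^2+\varepsilon_p^2\partial_2^2$,
\[
L\theta_p=c_p\,\partial_1 N_p+\gamma\varepsilon_p^2\,\partial_1(N_p\partial_1\theta_p)+\gamma\varepsilon_p^4\,\partial_2(N_p\partial_2\theta_p).
\]
Keeping the two error terms under a derivative is essential: once $L$ is inverted, every term will carry a $\partial_1$ or $\partial_2$ that pairs with the inverse to produce a homogeneous symbol of anisotropic degree zero.

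On the Fourier side $L$ has symbol $-(\xi_1^2+\varepsilon_p^2\xi_2^2)$, so $\partial_1\theta_p=\partial_1L^{-1}F$ and $\varepsilon_p\partial_2\theta_p=\varepsilon_p\partial_2L^{-1}F$, where $F$ is the right-hand side above. Expanding these and rewriting each occurrence of $\partial_2\theta_p$ as $\varepsilon_p^{-1}(\varepsilon_p\partial_2\theta_p)$, I would arrange every contribution so that the operator acting is one of
\[
\frac{\xi_1^2}{\xi_1^2+\varepsilon_p^2\xi_2^2},\qquad \frac{\varepsilon_p\xi_1\xi_2}{\xi_1^2+\varepsilon_p^2\xi_2^2},\qquad \frac{\varepsilon_p^2\xi_2^2}{\xi_1^2+\varepsilon_p^2\xi_2^2},
\]
applied to $N_p$, to $N_p\partial_1\theta_p$, or to $N_p(\varepsilon_p\partial_2\theta_p)$, with the surplus powers of $\varepsilon_p$ collected as a common prefactor $\varepsilon_p^2$. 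The crux is that, after the substitution $\zeta_2=\varepsilon_p\xi_2$, each of these three symbols becomes the \emph{fixed} symbol $\xi_1^2/(\xi_1^2+\zeta_2^2)$, etc.; since each factor $\xi_j^{k_j}\partial_j^{k_j}$ in Lizorkin's quantity $M(\widehat K)$ is invariant under this anisotropic scaling, $M$ of the $\varepsilon_p$-dependent symbol equals $M$ of the fixed one, which is finite. Theorem \ref{lizorkin} then yields $L^q\to L^q$ bounds with constants independent of $p$.

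Combining these bounds and setting $A=\|\partial_1\theta_p\|_{L^q}$, $B=\varepsilon_p\|\partial_2\theta_p\|_{L^q}$, I obtain
\[
A+B\le C\|N_p\|_{L^q}+C\varepsilon_p^2\|N_p\|_{L^\infty}(A+B).
\]
Because $\varepsilon_p^2\|N_p\|_{L^\infty}=\gamma^{-1}\|\rho_p-1\|_{L^\infty}\to0$ by \eqref{2.3}, the last term is absorbed for $p$ small, which is precisely \eqref{controle theta}. For the higher-order bound \eqref{4.26} I differentiate $L\theta_p=F$ (which commutes with $\partial^\alpha$), apply Leibniz to the two product terms, and reuse the same three uniform multipliers: the single term carrying all derivatives on $\theta_p$ contributes $C\varepsilon_p^2\|N_p\|_{L^\infty}\Gamma_p(q,\alpha)$ and is absorbed, while the remaining Leibniz terms reproduce exactly the sum $\varepsilon_p^2\sum_{0\le\beta<\alpha}\|\partial^\beta N_p\|_{L^\infty}\Gamma_p(q,\alpha-\beta)$ and the leading term $C\|\partial^\alpha N_p\|_{L^q}$.

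I expect the main obstacle to be the interaction between Step two and Step three: one must verify that the stray factors $\varepsilon_p^{-1}$ generated by the mixed operators $\partial_1L^{-1}\partial_2$ and $\partial_2L^{-1}\partial_2$ are \emph{always} compensated, either by the $\varepsilon_p$ attached to $\partial_2\theta_p$ or by a power of $\varepsilon_p$ coming from the $\gamma\varepsilon_p^4\partial_2(\cdots)$ term, so that only the three uniformly bounded symbols above ever act; this bookkeeping, together with the (routine but essential) check that these symbols satisfy the $C^2$ and $M(\widehat K)<\infty$ hypotheses of Theorem \ref{lizorkin}, is what makes the constants in \eqref{controle theta} and \eqref{4.26} independent of $p$.
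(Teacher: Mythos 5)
Your proposal is correct, and it is worth noting how it relates to the paper's own argument: the two proofs are conjugate to each other under the anisotropic rescaling, but the work is distributed differently. The paper never inverts the operator $L=\partial_1^2+\varepsilon_p^2\partial_2^2$. It stays in the original variables, where the continuity equation reads $\Delta\phi_p=c_p\partial_1\eta_p-\mathrm{div}(\eta_p\nabla\phi_p)$, applies the standard Calder\'on--Zygmund elliptic estimate $\|\nabla\partial^\alpha\phi_p\|_{L^q}\le C(q)\big(\|\partial^\alpha\eta_p\|_{L^q}+\|\partial^\alpha(\eta_p\nabla\phi_p)\|_{L^q}\big)$ (whose constant is trivially independent of $p$, since the multiplier does not depend on $p$), absorbs $\eta_p\nabla\phi_p$ by the smallness \eqref{2.3}, runs the Leibniz formula, and only at the very end converts to $(N_p,\theta_p)$ through the exact scaling identities $\|\partial^\alpha N_p\|_{L^q}=C\varepsilon_p^{-(2+\alpha_1+2\alpha_2-3/q)}\|\partial^\alpha\eta_p\|_{L^q}$, etc.; the prefactors $\varepsilon_p^2$ in \eqref{4.26} and the weight $\varepsilon_p$ on $\partial_2\theta_p$ fall out of the mismatch between these identities. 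Your proof instead works directly in the rescaled variables, so the $p$-dependence migrates into the symbols $\xi_1^2/(\xi_1^2+\varepsilon_p^2\xi_2^2)$, $\varepsilon_p\xi_1\xi_2/(\xi_1^2+\varepsilon_p^2\xi_2^2)$, $\varepsilon_p^2\xi_2^2/(\xi_1^2+\varepsilon_p^2\xi_2^2)$, which are the double Riesz transforms read in rescaled coordinates; your observation that Lizorkin's quantity $M(\widehat K)$ is invariant under $\xi_2\mapsto\varepsilon_p\xi_2$ is the multiplier-side expression of the invariance of $L^q$ multiplier norms under linear changes of variables, and it is correct. Neither route is more general, but the paper's is slightly lighter since uniformity in $p$ requires no verification at all.

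Two points you glossed over should be made explicit. First, every absorption step (for \eqref{controle theta}, and for the $\beta=0$ Leibniz term in \eqref{4.26}) presupposes that $\|\partial_1\theta_p\|_{L^q}+\varepsilon_p\|\partial_2\theta_p\|_{L^q}$, respectively $\Gamma_p(q,\alpha)$, is \emph{finite}. For $2\le q<\infty$ this follows from Lemma \ref{régularité}; for $1<q<2$ it is not available at this stage, and the paper handles it by a forward reference (the remark following Proposition \ref{inégalitéconvolution}); your write-up needs the same caveat. Second, in the Leibniz expansion the extreme term $\beta=\alpha$, i.e. $\varepsilon_p^2\,\partial^\alpha N_p\,\partial_1\theta_p$ and its $\partial_2$ analogue, is \emph{not} of the form appearing in the sum of \eqref{4.26}, since $\Gamma_p(q,0)$ is excluded from that sum; it must be bounded by $\varepsilon_p^2\|\partial^\alpha N_p\|_{L^q}\|\partial_1\theta_p\|_{L^\infty}\le C\|\partial^\alpha N_p\|_{L^q}$, which requires the uniform bounds \eqref{controleee}, i.e. $\varepsilon_p^2\|\partial_1\theta_p\|_{L^\infty}+\varepsilon_p^3\|\partial_2\theta_p\|_{L^\infty}\le C$ --- exactly the role played by $\|\nabla\phi_p\|_{L^\infty}\le C$ in the paper's Leibniz step. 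With these two additions your argument is complete.
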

First of all in view of theorem \ref{existencethm} and lemma 4.2 in \cite{audiard2017small} we have:
\begin{lemme}\label{régularité}
For any $k \in \mathbb{N}$, $q \in [2, \infty[$ we have
\begin{equation}
\label{regularite}
    (\eta_p,\nabla \phi_p) \in W^{k,q}(\mathbb{R}^2) \cap C^{\infty} (\mathbb{R}^2).
\end{equation}
Moreover for any $\alpha \in \mathbb{N}^2$, there exists $C(\alpha)>0$ not depending on $p$ such that
\begin{equation}
\label{controleee}
    ||(\partial^{\alpha}\eta_p,\partial^{\alpha}\nabla \phi_p)||_{L^{\infty}(\mathbb{R}^2)} \leq C(\alpha).
\end{equation}
\end{lemme}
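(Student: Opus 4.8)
The membership statement \eqref{regularite} is essentially contained in Theorem \ref{existencethm}. Indeed that theorem gives $(\rho_p,\phi_p)\in\bigcap_{j\ge 0}(1+H^j)\times\dot H^{j+1}$, so that $\eta_p=\rho_p-1\in\bigcap_{j}H^j(\Ra^2)$ and, since $\phi_p\in\dot H^1\cap\dot H^{j+1}$, also $\nabla\phi_p\in\bigcap_j H^j(\Ra^2)$. Differentiating and applying the embedding \eqref{lemme9} ($H^1(\Ra^2)\hookrightarrow L^q$ for $2\le q<\infty$) to each derivative $\p^\alpha$ with $|\alpha|\le k$ shows $\bigcap_j H^j\subset W^{k,q}(\Ra^2)$ for every $k$ and every $2\le q<\infty$, while the Morrey embedding \eqref{theorème de morrey} gives $\bigcap_j H^j\subset\bigcap_k C^k_0\subset C^\infty$. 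This proves \eqref{regularite}; the genuine content of the lemma is the uniformity in $p$ asserted in \eqref{controleee}.

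For \eqref{controleee} it suffices, again by \eqref{theorème de morrey}, to produce bounds $\|\eta_p\|_{H^s}+\|\nabla\phi_p\|_{H^s}\le C(s)$ that are uniform in $p$, for every $s$. I would obtain these by an elliptic bootstrap on the profile system \eqref{E-K simple}, which in the original (unrescaled) variables is uniformly elliptic. Rewriting its two lines as
\[
\Delta\phi_p=\tfrac{1}{\rho_p}\big(c_p\p_1\eta_p-\nabla\eta_p\cdot\nabla\phi_p\big),\qquad K(\rho_p)\Delta\eta_p=-c_p\p_1\phi_p+g(\rho_p)+\tfrac12|\nabla\phi_p|^2-\tfrac12K'(\rho_p)|\nabla\eta_p|^2,
\]
one sees two Poisson equations whose right-hand sides are of lower order, the coefficients $1/\rho_p$, $K(\rho_p)$, $K'(\rho_p)$ being smooth functions of $\rho_p$.

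The base of the induction is a uniform $H^1$ bound. From \eqref{2.3} and $E(\rho_p,\phi_p)\le p$ (see \eqref{energie-moment}) we have $\|\eta_p\|_\infty\to 0$, so for $p$ small $\rho_p$ ranges in a fixed compact neighbourhood of $1$ on which $K(\rho_p)\ge c>0$ and $\rho_p\ge c>0$; combined with the coercive lower bound $G(\rho_p)\ge \eta_p^2/3$ used in the proof of \eqref{limiteN}, the definition \eqref{energie E K} of the energy then yields $\|\eta_p\|_{H^1}+\|\nabla\phi_p\|_{L^2}\le C\,E(\rho_p,\phi_p)^{1/2}\to 0$. The inductive step is standard: assuming uniform control of $(\eta_p,\nabla\phi_p)$ in $H^{s}$, the displayed equations together with the Moser product and composition estimates (whose constants depend only on the fixed neighbourhood of $1$, hence not on $p$) bound both right-hand sides in $H^{s-1}$ uniformly, and after dividing the second equation by $K(\rho_p)$ the full-space elliptic estimate $\|u\|_{H^{s+1}}\le C\big(\|\Delta u\|_{H^{s-1}}+\|u\|_{L^2}\big)$ upgrades the control to $H^{s+1}$, again uniformly. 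Iterating gives uniform $H^s$ bounds for all $s$, and \eqref{theorème de morrey} converts them into \eqref{controleee}.

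The only real difficulty is the uniformity in $p$ of every constant entering the bootstrap. This rests on two facts, both consequences of $\|\rho_p-1\|_\infty\to 0$: the two equations stay uniformly elliptic ($K(\rho_p),\rho_p$ bounded away from $0$), and the coefficient functions $\rho_p\mapsto 1/\rho_p,\,K(\rho_p),\,K'(\rho_p)$ together with all their derivatives are evaluated on a fixed compact set and are therefore uniformly bounded. Since $c_p\le 1$ as well, no constant degenerates as $p\to 0$ and the induction closes. Alternatively, one may simply invoke Theorem \ref{existencethm} together with Lemma 4.2 of \cite{audiard2017small}, which already carries out this argument.
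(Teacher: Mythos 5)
Your proposal is correct in outcome and, via its final sentence, actually contains the paper's entire proof: the paper offers no argument for this lemma beyond the phrase ``in view of theorem \ref{existencethm} and lemma 4.2 in \cite{audiard2017small}''. Your derivation of \eqref{regularite} from the memberships $(\rho_p,\phi_p)\in\bigcap_{j\ge 0}(1+H^j)\times\dot H^{j+1}$ through the embeddings \eqref{lemme9} and \eqref{theorème de morrey} is sound, and your diagnosis that the genuine content is the uniformity in $p$ of \eqref{controleee} is exactly right; making the bootstrap explicit is a real expository gain over the paper, which delegates it entirely to \cite{audiard2017small}.

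However, the self-contained induction you sketch does not start as stated. At the bottom rung you control only $\|\eta_p\|_{H^1}+\|\nabla\phi_p\|_{L^2}$, so the quadratic terms $\nabla\eta_p\cdot\nabla\phi_p$, $|\nabla\phi_p|^2$, $|\nabla\eta_p|^2$ lie merely in $L^1(\mathbb{R}^2)$, not in $H^{s-1}=L^2$, and the Moser product estimate $\|fg\|_{H^{s-1}}\le C\big(\|f\|_{L^\infty}\|g\|_{H^{s-1}}+\|f\|_{H^{s-1}}\|g\|_{L^\infty}\big)$ that you invoke needs $L^\infty$ bounds on the gradients, which in dimension two only become available once $H^s$ control with $s\ge 3$ is already in hand (so that $\nabla\eta_p,\nabla\phi_p\in H^2\hookrightarrow L^\infty$). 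The first rungs must be handled differently, and here the smallness from your base case is what saves you: by Ladyzhenskaya's inequality $\|v\|_{L^4}^2\le C\|v\|_{L^2}\|\nabla v\|_{L^2}$, setting $X=\|\nabla^2\eta_p\|_{L^2}+\|\nabla^2\phi_p\|_{L^2}$ and $\delta=\|\eta_p\|_{H^1}+\|\nabla\phi_p\|_{L^2}$, the two Poisson equations give $X\le C(\delta+\delta X)$, which closes by absorption since $\delta\to 0$ as $p\to 0$ (alternatively, run Calder\'on--Zygmund estimates in $L^q$ with $1<q<2$ and climb by Sobolev embedding, in the spirit of how the paper later treats $N_p$). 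Once the $H^3$ level is reached, your Moser-based step and your discussion of uniformity (uniform ellipticity, coefficient functions evaluated on a fixed compact neighbourhood of $1$, $c_p\le 1$) are correct and complete the argument. In short: as a citation your proposal matches the paper; as a self-contained proof it has one repairable gap at the start of the induction.
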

Later, using Lizorkin theorem, we will have $N_p$, $\partial_1 \theta_p$ and $\p_2 \theta_p$ in $W^{k,q}(\Ra^2)$, for any $1<q<2$ and $k \in \mathbb{N}$. Thus the quantity $\Gamma_p(q,\alpha)$ is finite for any $1<q<\infty,\alpha \in \mathbb{N}^2$. This is the reason 
why in proposition \ref{lemme 4.6}, we let $1<q<\infty$.
\begin{proof}[Proof of proposition \ref{lemme 4.6}]
We have
\begin{equation}
    \label{1.12}
    -c_p \partial_1 \rho_p =-\text{div}(\rho_p \nabla \phi_p),
\end{equation}
then
$$
\Delta \phi_p = c_p
\partial_1 \eta_p - \text{div}(\eta_p \nabla \phi_p),
$$
and for any $\alpha \in \mathbb{N}^2$  $$
\Delta (\partial^{\alpha}\phi_p) = c_p \partial_1 \partial^{\alpha}\eta_p - \text{div}(\partial^{\alpha}(\eta_p \nabla \phi_p)).
$$
Thus, using elliptic estimates (see \cite{gilbarg1977elliptic}), there exists some constant $C(q)$ such that
\begin{equation}
\label{elliptique lol}
    || \nabla (\partial^{\alpha} \phi_p) ||_{L^q(\mathbb{R}^2)} \leq C(q) \big( 
    ||\partial^{\alpha} \eta_p ||_{L^q(\mathbb{R}^2)}+ ||\partial^{\alpha}(\eta_p \nabla \phi_p)||_{L^q(\mathbb{R}^2)}).
\end{equation}
Using \eqref{2.3} we have $C(q) \|\eta_p \nabla \phi_p\|_{L^q(\Ra^2)} \leq \frac{1}{2} \| \nabla \phi_p \|_{L^q(\Ra^2)}$ so that from   \eqref{elliptique lol} we have
\begin{equation}
\label{tkkkkttt}
    || \nabla \phi_p||_{L^q(\mathbb{R}^2)} \leq C(q)  ||\eta_p ||_{L^q(\mathbb{R}^2)}.
\end{equation}
Next using Leibniz formula  we obtain
\begin{equation}
\begin{split}
 ||\partial^{\alpha} (\eta_p \nabla \phi_p) ||_{L^q(\mathbb{R}^2)} \leq C(q, \alpha) \bigg(&||\partial^{\alpha}  \eta_p||_{L^q(\mathbb{R}^2)}  || \nabla \phi_p ||_{L^{\infty}(\mathbb{R}^2)} \\&+
 \sum_{0 \leq \beta <\alpha} || \partial^{\beta} \eta_p ||_{L^{\infty}(\mathbb{R}^2)}  ||\partial^{\alpha-\beta}(\nabla \phi_p) ||_{L^q(\mathbb{R}^2)} \bigg).
 \end{split}
\end{equation}
Then, using \eqref{controleee} and \eqref{elliptique lol}, we have
\begin{equation}
\begin{split}
\label{3.17}
   ||\partial^{\alpha} (\nabla \phi_p)||_{L^q(\mathbb{R}^2)} \leq C(q, \alpha) \big(& || \partial^{\alpha} \eta_p ||_{L^q(\mathbb{R}^2)} \\ &+  \sum_{0 \leq \beta <\alpha} ||\partial^{\beta} \eta_p ||_{L^{\infty}(\mathbb{R}^2)} || \partial^{\alpha-\beta}(\nabla \phi_p) ||_{L^q(\mathbb{R}^2)} \big).
 \end{split}
 \end{equation}
We observe that (as a direct consequence of the rescaling \eqref{chgmt variable1}, \eqref{chgmt variable2}) there exists some positive constants $C_1(q,\alpha)$, $C_2(q,\alpha)$ and $C_3(q,\alpha)$, such that
\begin{align*}
    &||\partial^{\alpha}N_p||_{L^q(\mathbb{R}^2)} = \frac{C_1(q,\alpha)}{\varepsilon^{2+\alpha_1+2\alpha_2-\frac{3}{q}}_p} ||\partial^{\alpha}\eta_p||_{L^q(\mathbb{R}^2)},\\
    &||\partial^{\alpha}\partial_1\theta_p||_{L^q(\mathbb{R}^2)} = \frac{C_2(q,\alpha)}{\varepsilon^{2+\alpha_1+2\alpha_2-\frac{3}{q}}_p} ||\partial^{\alpha}\partial_1\phi_p||_{L^q(\mathbb{R}^2)},\\ &||\partial^{\alpha}\partial_2\theta_p||_{L^q(\mathbb{R}^2)} = \frac{C_3(q,\alpha)}{\varepsilon^{3+\alpha_1+2\alpha_2-\frac{3}{q}}_p} ||\partial^{\alpha}\partial_2\phi_p||_{L^q(\mathbb{R}^2)}.
\end{align*}
Combining with  \eqref{tkkkkttt} and \eqref{3.17}, we obtain \eqref{controle theta}  and \eqref{4.26}.
\end{proof}
\begin{rem}
Assuming we have bounds for $N_p$ in $W^{k,q}$ for any $k \in \mathbb{N}$, $1<q\leq \infty$ then, by induction using proposition \ref{lemme 4.6} and Sobolev embedding, we can bound $\theta_p$ in $W^{k,q}$ for any $k \in \mathbb{N}$,\;$1<q\leq \infty$.
\end{rem}
\section{Convolution equation}
\subsection{Reformulation}
The aim of this section is to rewrite  equation \eqref{E-K} as a convolution equation for $N_p$, similar to \eqref{SWW}. The first two lemmas are direct computations.
\begin{lemme}
We have
\begin{equation}
\label{camarche2}
    \partial_1N_p-\partial_1^2 \theta_p= \varepsilon_p^2 ( \mathbf{L}_1(N_p,\theta_p) +
    R(N_p,\theta_p) ),
\end{equation}
with
$$
\mathbf{L}_1(N_p,\theta_p)=\frac{1}{\varepsilon_p^2}(1-c_p)\partial_1 N_p + \partial_2^2\theta_p,
$$
$$
R(N_p,\theta_p)=- \gamma\partial_1 \left( N_p\partial_1\theta_p \right)- \gamma \varepsilon_p^2\partial_2(N_p\partial_2\theta_p).$$
\end{lemme}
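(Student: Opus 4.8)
The plan is to derive \eqref{camarche2} as a pure change-of-variables identity obtained by inserting the rescaling \eqref{chgmt variable1}--\eqref{chgmt variable2} into the mass conservation equation, i.e. the first line of \eqref{E-K simple}, and carefully tracking the powers of $\varepsilon_p$ generated by the anisotropic scaling. First I would put the mass equation in conservation form. Writing $\rho_p=1+\eta_p$, the first line of \eqref{E-K simple} becomes
\[
-c_p \partial_1 \eta_p + \partial_1^2 \phi_p + \partial_2^2 \phi_p + \partial_1(\eta_p \partial_1 \phi_p) + \partial_2(\eta_p \partial_2 \phi_p) = 0,
\]
where for the moment $\partial_1,\partial_2$ denote derivatives in the original variables $y=\big(\sqrt{K(1)}x_1/\varepsilon_p,\ \sqrt{K(1)}x_2/\varepsilon_p^2\big)$.

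Next I would invert the rescaling. With $z=\big(\varepsilon_p y_1/\sqrt{K(1)},\ \varepsilon_p^2 y_2/\sqrt{K(1)}\big)$ one has $\eta_p(y)=-\gamma\varepsilon_p^2 N_p(z)$ and $\phi_p(y)=-\gamma\varepsilon_p\sqrt{K(1)}\,\theta_p(z)$, so that $\partial_{y_1}=(\varepsilon_p/\sqrt{K(1)})\partial_{z_1}$ and $\partial_{y_2}=(\varepsilon_p^2/\sqrt{K(1)})\partial_{z_2}$. The decisive bookkeeping is that each longitudinal derivative $\partial_{y_1}$ contributes a factor $\varepsilon_p$ while each transverse derivative $\partial_{y_2}$ contributes $\varepsilon_p^2$; combined with the amplitudes $\varepsilon_p^2$ for $\eta_p$ and $\varepsilon_p$ for $\phi_p$, the five terms of the equation land at orders $\varepsilon_p^3,\varepsilon_p^3,\varepsilon_p^5,\varepsilon_p^5,\varepsilon_p^7$ respectively, all sharing the prefactor $\gamma\varepsilon_p^3/\sqrt{K(1)}$, the two nonlinear terms carrying in addition one extra factor $\gamma$. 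Dividing the whole identity by $\gamma\varepsilon_p^3/\sqrt{K(1)}$ collapses it to
\[
-c_p\partial_1 N_p + \partial_1^2\theta_p - \gamma\varepsilon_p^2\,\partial_1(N_p\partial_1\theta_p) + \varepsilon_p^2\partial_2^2\theta_p - \gamma\varepsilon_p^4\,\partial_2(N_p\partial_2\theta_p) = 0,
\]
now with $\partial_1,\partial_2$ taken in the $z$-variables.

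Finally I would solve for $\partial_1^2\theta_p$ and substitute into $\partial_1 N_p-\partial_1^2\theta_p$; adding and subtracting $\partial_1 N_p$ produces the term $(1-c_p)\partial_1 N_p$, and factoring $\varepsilon_p^2$ out of the four remaining terms yields exactly $\varepsilon_p^2\big(\mathbf{L}_1(N_p,\theta_p)+R(N_p,\theta_p)\big)$ with $\mathbf{L}_1$ and $R$ as stated. There is no conceptual obstacle: the statement is a routine substitution, and the only genuine care is in keeping the anisotropic scaling exponents straight. In particular, the transverse derivative $\partial_2$ is weighted by $\varepsilon_p^2$, which is precisely why the term $\partial_2^2\theta_p$ enters at order $\varepsilon_p^2$ inside $\mathbf{L}_1$ while the transverse nonlinearity $\partial_2(N_p\partial_2\theta_p)$ is pushed one order higher into $R$; the factor $\gamma$ in the nonlinear terms comes from the amplitude normalization in \eqref{chgmt variable2}, and the overall signs are fixed by the minus signs in the definitions of $N_p$ and $\theta_p$.
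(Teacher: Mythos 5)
Your proposal is correct and coincides with the paper's own (unstated) argument: the paper simply declares this lemma a ``direct computation,'' and the computation it has in mind is exactly yours, namely substituting the anisotropic rescaling \eqref{chgmt variable1}--\eqref{chgmt variable2} into the mass conservation equation (the first line of \eqref{E-K simple}) and tracking the powers of $\varepsilon_p$. Your bookkeeping of the orders $\varepsilon_p^3,\varepsilon_p^3,\varepsilon_p^5,\varepsilon_p^5,\varepsilon_p^7$, the extra factor $\gamma$ on the nonlinear terms, and the final regrouping into $\mathbf{L}_1$ and $R$ are all accurate.
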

\begin{lemme}
The function $N_p$ and $\theta_p$ satisfy

\begin{equation}
\label{4.13}
\begin{split}
   N_p(x)-c_p\partial_1(\theta_p(x)) =&+ \varepsilon_p^2 \bigg[\frac{\gamma g''(1)}{2} N^2_p-\varepsilon_p^2 \gamma^2 N_p^3l_1 \\&+ K\bigg(1-\gamma\varepsilon_p^2N_p(x)\bigg) \bigg( \frac{\partial_1^2 N_p}{K(1)}+\varepsilon_p^2\frac{\partial_2^2 N_p}{K(1)}\bigg)\\&+ \gamma \frac{(\partial_1 \theta_p)^2}{2}+\gamma\frac{\varepsilon_p^2(\partial_2 \theta_p)^2}{2}\\&-\gamma\frac{K'\bigg(1-\gamma\varepsilon_p^2N_p(x)\bigg)}{2K(1)}\big(\varepsilon_p^2\big(\partial_1N_p\big)^2+\varepsilon_p^4\big(\partial_2N_p)^2\big) \bigg],
\end{split}
\end{equation}
where $l_1$ is defined by
$$
g(\rho_p(x))=-\gamma\varepsilon_p^2N_p(x')+\varepsilon_p^4  \frac{g''(1)}{2}\gamma^2N^2_p(x')-\gamma^3\varepsilon_p^6 N_p^3(x')l_1(-\varepsilon^2_p\gamma N_p(x')).
$$
\end{lemme}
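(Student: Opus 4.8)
The plan is to read \eqref{4.13} off directly from the second line of the rescaled system \eqref{E-K simple} by performing the anisotropic change of variables that defines $N_p$ and $\theta_p$; this is purely mechanical, with no analytic input beyond a Taylor expansion. As announced in the text, it is a direct computation, and the only thing one must guarantee is that everything is legitimate pointwise, which follows from the smoothness and uniform bounds of Lemma \ref{régularité}.

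Concretely, I would write the second (Bernoulli) equation of \eqref{E-K simple} at the point $y=(y_1,y_2):=\big(\tfrac{\sqrt{K(1)}x_1}{\varepsilon_p},\tfrac{\sqrt{K(1)}x_2}{\varepsilon_p^2}\big)$ and invert the defining relations \eqref{chgmt variable1}–\eqref{chgmt variable2}, which read
$$
\rho_p(y)=1-\gamma\varepsilon_p^2 N_p(x),\qquad \phi_p(y)=-\gamma\varepsilon_p\sqrt{K(1)}\,\theta_p(x),
$$
so that evaluating $g,K,K'$ at $\rho_p(y)$ produces exactly the arguments $1-\gamma\varepsilon_p^2N_p(x)$ appearing in \eqref{4.13}. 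The key mechanical step is the anisotropic chain rule: since $x_1=\tfrac{\varepsilon_p y_1}{\sqrt{K(1)}}$ and $x_2=\tfrac{\varepsilon_p^2 y_2}{\sqrt{K(1)}}$, one has $\p_{y_1}=\tfrac{\varepsilon_p}{\sqrt{K(1)}}\p_1$ and $\p_{y_2}=\tfrac{\varepsilon_p^2}{\sqrt{K(1)}}\p_2$. Applying these to each term and cancelling the factors of $\sqrt{K(1)}$ gives $\p_{y_1}\phi_p=-\gamma\varepsilon_p^2\p_1\theta_p$, $\p_{y_2}\phi_p=-\gamma\varepsilon_p^3\p_2\theta_p$, $\Delta_y\rho_p=-\tfrac{\gamma\varepsilon_p^4}{K(1)}(\p_1^2N_p+\varepsilon_p^2\p_2^2N_p)$, and $|\nabla_y\rho_p|^2=\tfrac{\gamma^2\varepsilon_p^6}{K(1)}\big((\p_1N_p)^2+\varepsilon_p^2(\p_2N_p)^2\big)$, which already reproduce the capillarity and kinetic contributions of the right-hand side.

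For the pressure term I would Taylor-expand $g$ at $\rho=1$ using $g(1)=0$, $g'(1)=1$: writing the third-order remainder as $s^3 l_1(s)$ and substituting $s=-\gamma\varepsilon_p^2N_p$ yields precisely the displayed identity defining $l_1$, whose argument stays bounded thanks to \eqref{2.3}. Collecting everything, the only terms linear in $\varepsilon_p^2$ are $-\gamma\varepsilon_p^2N_p$ (from $g$) and $c_p\gamma\varepsilon_p^2\p_1\theta_p$ (from $-c_p\p_{y_1}\phi_p$), while every remaining contribution carries at least a factor $\varepsilon_p^4$. The equation $A+g(\rho_p)+C+D+E=0$ thus rearranges to $\gamma\varepsilon_p^2\big(N_p-c_p\p_1\theta_p\big)=(\text{terms of order }\varepsilon_p^4)$, and dividing by $\gamma\varepsilon_p^2$ gives \eqref{4.13} with the common prefactor $\varepsilon_p^2$. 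The only genuine obstacle is bookkeeping: one must track the asymmetric powers of $\varepsilon_p$ coming from the two directions ($\varepsilon_p$ in $x_1$ versus $\varepsilon_p^2$ in $x_2$) and check that each term lands at exactly the advertised order; there is no analytic difficulty, since the computation is valid pointwise for the smooth profiles $(\rho_p,\phi_p)$.
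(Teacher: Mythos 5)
Your proposal is correct and is exactly what the paper does: the paper dismisses this lemma as a "direct computation," and your computation — inverting the rescaling \eqref{chgmt variable1}–\eqref{chgmt variable2}, applying the anisotropic chain rule to the second equation of \eqref{E-K simple}, Taylor-expanding $g$ at $1$ to define $l_1$, and dividing the resulting identity by $-\gamma\varepsilon_p^2$ — reproduces \eqref{4.13} with all the correct powers of $\varepsilon_p$ and the correct arguments $1-\gamma\varepsilon_p^2N_p$ in $K$, $K'$.
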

\begin{rem}
We recall that 
$$
x'=\bigg( \frac{\varepsilon_p x_1}{\sqrt{K(1)}}, \frac{\varepsilon_p^2x_2}{\sqrt{K(1)}} \bigg).
$$
In view of \eqref{2.3},  the function $l_1(-\varepsilon_p^2\gamma N_p(x'))=l_1(\rho_{p}(x)-1)$ is smooth and bounded in $L^{\infty}$ independently of p. To simplify we write
$$
g(\rho_p(x))=-\gamma\varepsilon_p^2N_p(x')+\varepsilon_p^4  \frac{g''(1)}{2}\gamma^2N^2_p(x')-\gamma^3\varepsilon_p^6 N_p^3(x')l_1(x').
$$
\end{rem}
Now we combine these two lemmas to obtain a Kadomtsev-Petviashvili equation with some remainder.
\begin{proposition} 
\label{proposition 5.3}
\begin{equation}
\begin{split}
\label{presque KP1}
    \partial_1^4N_p- \Delta N_p-\mathbf{L}(N_p,\theta_p)= &-\partial_1^2 f_p\\&-\varepsilon_p^2\sum_{i+j=2}\partial_1^i\partial_2^j R_{\varepsilon_p}^{i,j},
    \end{split}
\end{equation}
with
\begin{equation}
\label{definition L}
    \mathbf{L}(N_p,\theta_p) =- (2\varepsilon_p^2 \partial_1^2\partial_2^2N_p +\varepsilon_p^4\partial_2^4N_p),
\end{equation}
\begin{equation}
\label{4.16}
\begin{split}
    \frac{1}{\gamma} R^{2,0}_{\varepsilon_p}(N_p,\theta_p)=&\bigg(\frac{c_p-1}{\varepsilon_p^2} N_p\partial_1\theta_p-\gamma N_p^3l_1+\frac{1}{2}(\partial_2 \theta_p)^2-\frac{K'(\rho_p)}{K(1)}(\partial_1N_p)^2\bigg) \\ &-\varepsilon_p^2\bigg(
    \frac{K'(\rho_p)}{K(1)}\big(\partial_2N_p)^2 \bigg) -\frac{h_1}{\gamma\varepsilon_p^4}h_2 ,
    \end{split}
\end{equation}
\begin{equation}
\label{4.17}
    \begin{split}
        \frac{1}{\gamma} R_{\varepsilon_p}^{0,2}(N_p,\theta_p)=&\bigg(\frac{g''(1)}{2}N^2_p+\frac{\big(\partial_1\theta_p\big)^2}{2} \bigg)+\varepsilon_p^2 \Bigg[-\gamma N_p^3l_1+\frac{(\partial_2\theta_p)^2}{2}-\frac{K'(\rho_p)(\partial_1N_p)^2}{2K(1)} \bigg]\\&-\varepsilon_p^4\frac{K'(\rho_p)(\partial_2N_p)^2}{2K(1)}-\frac{1}{\gamma\varepsilon_p^2}h_1h_2,
    \end{split}
\end{equation}
\begin{equation}
\label{4.18}
    \frac{1}{\gamma  } R_{\varepsilon_p}^{1,1}(N_p,\theta_p)= c_p N_p \partial_2\theta_p,
\end{equation}
\begin{equation}
    f_p=\gamma \bigg( N_p\partial_1\theta_p+\frac{g''(1)}{2}N^2_p+\frac{1}{2}(\partial_1\theta_p)^2\bigg).
\end{equation}
    Where $h_1$ and $h_2$ are some functions defined later.
\end{proposition}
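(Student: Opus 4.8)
The two preceding lemmas are the rescaled continuity equation \eqref{camarche2} and the rescaled Bernoulli equation \eqref{4.13}; the whole content of Proposition \ref{proposition 5.3} is to eliminate $\theta_p$ between them and to read off the Kadomtsev--Petviashvili operator acting on $N_p$, sending everything genuinely smaller to the remainders. The plan is first to recast \eqref{camarche2} in the symmetric form obtained by expanding $\varepsilon_p^2\mathbf{L}_1(N_p,\theta_p)=(1-c_p)\partial_1 N_p+\varepsilon_p^2\partial_2^2\theta_p$, namely
\begin{equation*}
c_p\partial_1 N_p=\partial_1^2\theta_p+\varepsilon_p^2\partial_2^2\theta_p+\varepsilon_p^2 R(N_p,\theta_p).
\end{equation*}
Writing \eqref{4.13} as $N_p-c_p\partial_1\theta_p=\varepsilon_p^2\mathcal{B}$, where $\mathcal{B}$ is the bracket in \eqref{4.13}, I would apply $\partial_1$ and substitute $\partial_1^2\theta_p$ from the displayed identity; the coefficient of $\partial_1 N_p$ is then exactly $1-c_p^2$, which equals $\varepsilon_p^2$ by \eqref{epsilon}. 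Dividing through yields the third order, \eqref{SW}-type identity
\begin{equation*}
\partial_1 N_p=\partial_1\mathcal{B}-c_p\partial_2^2\theta_p-c_p R(N_p,\theta_p).
\end{equation*}

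The second step promotes this to the fourth order equation \eqref{presque KP1} by applying $\partial_1$ once more. The term $\partial_1^2\mathcal{B}$ produces $\partial_1^4 N_p$ out of the capillary contribution $K(1-\gamma\varepsilon_p^2 N_p)\partial_1^2 N_p/K(1)$, whose prefactor I expand as $1-\varepsilon_p^2 N_p j_p/K(1)$. The term $-c_p\partial_1\partial_2^2\theta_p$ is the crucial one: using $\partial_1\theta_p=c_p^{-1}(N_p-\varepsilon_p^2\mathcal{B})$ from \eqref{4.13} gives $-c_p\partial_1\partial_2^2\theta_p=-\partial_2^2 N_p+\varepsilon_p^2\partial_2^2\mathcal{B}$, which supplies precisely the $-\partial_2^2 N_p$ needed to complete $-\Delta N_p$. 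Collecting the two surviving pieces $\varepsilon_p^2\partial_1^2\partial_2^2 N_p$ (one from $\partial_1^2\mathcal{B}$, one from $\varepsilon_p^2\partial_2^2\mathcal{B}$) together with $\varepsilon_p^4\partial_2^4 N_p$ reconstitutes exactly $-\mathbf{L}(N_p,\theta_p)$ as in \eqref{definition L}.

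The last step is bookkeeping: every remaining term carries a factor $\varepsilon_p^2$ and is at least quadratic in $(N_p,\partial_1\theta_p,\partial_2\theta_p,\nabla N_p)$, and I would group it according to its $\partial_1$/$\partial_2$ derivative count into the divergence form $-\partial_1^2 f_p-\varepsilon_p^2\sum_{i+j=2}\partial_1^i\partial_2^j R^{i,j}_{\varepsilon_p}$. The leading quadratic nonlinearity assembles from the $\frac{\gamma g''(1)}{2}N_p^2$ and $\frac{\gamma}{2}(\partial_1\theta_p)^2$ terms of $\mathcal{B}$ and the $-\gamma\partial_1(N_p\partial_1\theta_p)$ term of $R$; after the two $\partial_1$-derivatives these combine, in the limit $c_p\to1$, into $-\partial_1^2 f_p$ with $f_p$ exactly as stated, the $O(\varepsilon_p^2)$ defect coming from $c_p\ne1$ being absorbed into the $\frac{c_p-1}{\varepsilon_p^2}N_p\partial_1\theta_p$ term of $R^{2,0}_{\varepsilon_p}$. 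The Taylor remainders of $g$ and $K$ beyond second order (the $N_p^3 l_1$, $K'$ and $j_p$ contributions) are smooth and bounded uniformly in $p$ by \eqref{2.3}, and are recorded in $R^{2,0}_{\varepsilon_p}$ and $R^{0,2}_{\varepsilon_p}$ through the auxiliary functions $h_1,h_2$.

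The computation is direct but heavy, so the main obstacle is purely organizational rather than analytic. Two points require care: keeping track of the cancellation produced by $1-c_p^2=\varepsilon_p^2$ at each differentiation, and the non-unique splitting of the quadratic and cubic remainders into the prescribed $\partial_1^i\partial_2^j$ divergence form with $i+j=2$, which is what fixes the precise expressions for $f_p$ and for the $R^{i,j}_{\varepsilon_p}$ and, in particular, pins down the auxiliary functions $h_1,h_2$ arising from the $K$-Taylor expansion.
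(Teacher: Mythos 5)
Your elimination scheme is, at its core, the same as the paper's: your two sequential substitutions amount exactly to forming the combination $(\partial_1^2+\varepsilon_p^2\partial_2^2)\eqref{4.13}-c_p\partial_1\eqref{camarche2}$, with the cancellation $1-c_p^2=\varepsilon_p^2$ yielding $-\varepsilon_p^2\Delta N_p$ and the capillary term yielding $\varepsilon_p^2\big(\partial_1^4N_p-\mathbf{L}(N_p,\theta_p)\big)$, which is precisely the paper's computation $-(\partial_1^2+\varepsilon_p^2\partial_2^2)\eqref{camarche1}+c_p\partial_1\eqref{camarche2}$; your identification of $f_p$, of $R^{1,1}_{\varepsilon_p}$, and of the $\frac{c_p-1}{\varepsilon_p^2}N_p\partial_1\theta_p$ term absorbing the defect $c_p\neq 1$ are all correct. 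The one genuine divergence is the treatment of the coefficient $K(\rho_p)$. You expand it additively, $K(1-\gamma\varepsilon_p^2N_p)=K(1)-\varepsilon_p^2N_pj_p$, so your remainder carries the term $\varepsilon_p^2K(1)^{-1}N_pj_p\big(\partial_1^2N_p+\varepsilon_p^2\partial_2^2N_p\big)$, which contains second derivatives of $N_p$. The paper instead multiplies \eqref{4.13} by $K(1)/K(\rho_p)=1+h_1$ \emph{before} differentiating (this is \eqref{camarche1}), so the capillary term becomes exactly $\partial_1^2N_p+\varepsilon_p^2\partial_2^2N_p$ and the correction is the product $h_1h_2$ with $h_1,h_2$ given by \eqref{alpha} and \eqref{beta} --- this is what pins down the functions $h_1,h_2$ that you leave unspecified. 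The two forms agree on solutions: by \eqref{4.13} one has $h_2=\varepsilon_p^2\,K(\rho_p)K(1)^{-1}\big(\partial_1^2N_p+\varepsilon_p^2\partial_2^2N_p\big)$ and $h_1=\varepsilon_p^2N_pj_p/K(\rho_p)$, hence $h_1h_2=\varepsilon_p^4K(1)^{-1}N_pj_p\big(\partial_1^2N_p+\varepsilon_p^2\partial_2^2N_p\big)$, so your identity is equivalent to \eqref{presque KP1}; but it is not literally the statement, whose remainders \eqref{4.16}--\eqref{4.17} are written in terms of $h_1h_2$. The multiplicative normalization also buys something downstream: read off from its definition \eqref{beta}, $h_2$ involves no second-order derivatives of $N_p$, which is what lets $\|h_1h_2/\varepsilon_p^2\|_{L^q}$ be bounded by lower-order quantities in the induction of Section 6 without circularity, whereas your form of the remainder would reintroduce $\|N_p\,\partial_1^2N_p\|_{L^q}$ there. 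So to obtain the proposition exactly as stated, either adopt the multiplication by $K(1)/K(\rho_p)$, or append the one-line on-shell identification above to convert your correction term into $h_1h_2$.
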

\begin{rem}
We observe that
\begin{equation}
    \gamma\bigg( 1+\frac{g''(1)}{2}+ \frac{1}{2}\bigg)=\gamma\bigg(\frac{3+g''(1)}{2}\bigg)=\frac{1}{2}.
\end{equation}
Then passing, formally in \eqref{presque KP1}, to the limit in $p$ we have
\begin{equation}
    \partial_1^4N_0- \Delta N_0 +\frac{1}{2} \partial_1^2(N_0^2)=0,
\end{equation}
i.e. $N_0$ is a solution of \eqref{KP1}.
\end{rem}
\begin{proof}
Let 
\begin{equation}
\label{alpha}
\begin{split}
   \frac{1}{K(1)} h_1(x)=\frac{1}{K\big(1 -\gamma \varepsilon_p^2 N_p(x)\big)}-\frac{1}{K(1)}=h_3(-\varepsilon^2_p\gamma N_p(x))\varepsilon_p^2N_p(x),
\end{split}
\end{equation}
and 
\begin{equation}
\label{beta}
\begin{split}
h_2= N_p-c_p\partial_1\theta_p - \varepsilon_p^2 \bigg[&\gamma\frac{g''(1)}{2} N^2_p-\gamma^2\varepsilon_p^2 N_p^3l  + \frac{\gamma(\partial_1 \theta_p)^2}{2}+\gamma\frac{\varepsilon_p^2(\partial_2 \theta_p)^2}{2}\\&-\gamma\frac{K'\bigg(1-\varepsilon_p^2N_p\bigg)}{2K(1)}\big(\varepsilon_p^2\big(\partial_1N_p\big)^2+\varepsilon_p^4\big(\partial_2N_p)^2\big) \bigg].
\end{split}
\end{equation}
First of all, multiplying \eqref{4.13}   by  $K(1)\bigg(\frac{1}{K(1)}+\frac{1}{K\big(1-\gamma\varepsilon_p^2N_p\big)}-\frac{1}{K(1)}\bigg)$ we obtain
\begin{equation}
\label{camarche1}
\begin{split}
 N_p-c_p\partial_1\theta_p =+ \varepsilon_p^2 \bigg[&\gamma\frac{g''(1)}{2} N^2_p-\gamma^2\varepsilon_p^2 N_p^3l +  \bigg( \partial_1^2 N_p+\varepsilon_p^2\partial_2^2 N_p\bigg)\\&+\gamma \frac{(\partial_1 \theta_p)^2}{2}+\gamma\frac{\varepsilon_p^2(\partial_2 \theta_p)^2}{2}\\&-\gamma\frac{K'\bigg(1-\gamma\varepsilon_p^2N_p\bigg)}{2K(1)}\big(\varepsilon_p^2\big(\partial_1N_p\big)^2+\varepsilon_p^4\big(\partial_2N_p)^2\big) \bigg]\\& - h_1h_2.
\end{split}
\end{equation}
We have computing $-(\partial_1^2+\varepsilon_p^2\partial_2^2)(\ref{camarche1})+c\partial_1(\ref{camarche2})$
\begin{equation}
\begin{split}
    \partial_1^4N_p- \Delta N_p=&-\partial_1^2\bigg[\gamma \bigg( N_p\partial_1\theta_p+\frac{g''(1)}{2}N^2_p+\frac{1}{2}(\partial_1\theta_p)^2\bigg)\bigg]\\&+\mathbf{L}(N_p,\theta_p)-\varepsilon_p^2\sum_{i+j=2}\partial_1^i\partial_2^j R_{\varepsilon_p}^{i,j},
    \end{split}
\end{equation}
 this completes the proof of Claim \eqref{presque KP1}.
\end{proof}
We now recast \eqref{presque KP1} as a convolution equation.
\begin{proposition}
Let 
\begin{equation}
\label{bientotfini}
  \widehat{K_{\varepsilon_p}^{i,j}}(\xi)=\frac{\xi_1^i \xi_2^j}{ |\xi|^2+\xi_1^4+ 2\varepsilon_p^2\xi_1^2\xi_2^2+ \varepsilon_p^4 \xi_2^4},
\end{equation}
we have
\begin{equation}
\label{equationrecurrence}
N_p=K_{\varepsilon_p}^{2,0} \ast f_p + \sum_{i+j=2}  \varepsilon_p^2  K_{\varepsilon_p}^{i,j} \ast R_{\varepsilon_p}^{i,j}.
\end{equation}
\end{proposition}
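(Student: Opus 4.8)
The plan is to pass to Fourier variables and invert the constant-coefficient fourth-order elliptic operator appearing on the left-hand side of \eqref{presque KP1}. First I would combine the left-hand side of \eqref{presque KP1}, namely $\partial_1^4 N_p - \Delta N_p - \mathbf{L}(N_p,\theta_p)$, with the definition \eqref{definition L} of $\mathbf{L}$ and write it as the single operator
\begin{equation}
\big(\partial_1^4 - \Delta + 2\varepsilon_p^2 \partial_1^2\partial_2^2 + \varepsilon_p^4 \partial_2^4\big) N_p .
\end{equation}
Applying the Fourier transform and using $\widehat{\partial_j u}(\xi) = i\xi_j \widehat{u}(\xi)$, the symbol of this operator is
$$
\xi_1^4 + |\xi|^2 + 2\varepsilon_p^2\xi_1^2\xi_2^2 + \varepsilon_p^4 \xi_2^4,
$$
which is exactly the denominator appearing in \eqref{bientotfini}; denote it $D(\xi)$.

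Next I would transform the right-hand side. Each term there is of the form $-\partial_1^i\partial_2^j(\cdot)$ with $i+j=2$, and since $(i\xi_1)^i(i\xi_2)^j = i^{\,i+j}\xi_1^i\xi_2^j = -\xi_1^i\xi_2^j$, the sign change coming from the symbol of the second-order derivatives cancels the explicit minus sign in front. One thus obtains the algebraic identity
\begin{equation}
D(\xi)\,\widehat{N_p}(\xi) = \xi_1^2\,\widehat{f_p}(\xi) + \varepsilon_p^2 \sum_{i+j=2} \xi_1^i\xi_2^j\,\widehat{R_{\varepsilon_p}^{i,j}}(\xi).
\end{equation}
The decisive step is then to divide by $D(\xi)$. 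The terms carrying $\varepsilon_p^2$ and $\varepsilon_p^4$ assemble into a perfect square, so that $D(\xi) = |\xi|^2 + (\xi_1^2 + \varepsilon_p^2\xi_2^2)^2 \geq |\xi|^2 > 0$ for $\xi \neq 0$, and the division is legitimate pointwise almost everywhere. This produces $\widehat{N_p} = \widehat{K_{\varepsilon_p}^{2,0}}\,\widehat{f_p} + \varepsilon_p^2\sum_{i+j=2}\widehat{K_{\varepsilon_p}^{i,j}}\,\widehat{R_{\varepsilon_p}^{i,j}}$ with precisely the multipliers of \eqref{bientotfini}, and taking inverse Fourier transforms together with $\widehat{u \ast v} = \widehat{u}\,\widehat{v}$ yields \eqref{equationrecurrence}.

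The only point requiring genuine care is that all these operations are licit in a suitable functional setting. For this I would invoke Lemma \ref{régularité}, which guarantees that $N_p$, $\theta_p$ and all their derivatives lie in $W^{k,q}\cap C^\infty(\mathbb{R}^2)$, so that $f_p$ and the $R_{\varepsilon_p}^{i,j}$ are well-defined functions whose Fourier transforms, multiplication by the symbols $\widehat{K_{\varepsilon_p}^{i,j}}$, and convolution are all justified at the level of tempered distributions; this bookkeeping is the main, if modest, obstacle. Observe finally that near the origin the numerators $\xi_1^i\xi_2^j$ with $i+j=2$ vanish to the same quadratic order as $D(\xi)\sim|\xi|^2$, so each $\widehat{K_{\varepsilon_p}^{i,j}}$ remains bounded at $\xi=0$; although this boundedness is not needed for the present identity, it is exactly the feature that will make the later application of Lizorkin's Theorem \ref{lizorkin} possible.
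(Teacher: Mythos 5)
Your proposal is correct and is exactly the ``direct computation'' that the paper's proof consists of (the paper gives no further detail): absorbing $\mathbf{L}$ into the left-hand side of \eqref{presque KP1}, recognizing the symbol $|\xi|^2+\xi_1^4+2\varepsilon_p^2\xi_1^2\xi_2^2+\varepsilon_p^4\xi_2^4 = |\xi|^2+(\xi_1^2+\varepsilon_p^2\xi_2^2)^2 >0$ for $\xi\neq 0$, checking that $i^{i+j}=-1$ cancels the minus signs on the right-hand side, and inverting the multiplier. Your additional remarks on regularity (Lemma \ref{régularité}) and on the boundedness of $\widehat{K_{\varepsilon_p}^{i,j}}$ at the origin are sound and merely make explicit what the paper leaves implicit.
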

\begin{proof}
It is a direct computation.
\end{proof}
Next we define
\begin{equation}
\label{Q}
    Q(\xi)= |\xi|^2+\xi_1^4+ 2\varepsilon_p^2\xi_1^2\xi_2^2+\varepsilon_p^4\xi_2^4.
\end{equation}
We need to etablish that the remainder term $R_{\varepsilon_p}$ are small enough in some Sobolev spaces.
\begin{proposition}
\label{controle reste}
There exists some positive constant $C$, not depending on $p$,  such that
\begin{equation}
\label{2}
    \begin{split}
       & \int_{\mathbb{R}^2} | R_{\varepsilon_p}^{1,1} |\leq C,\qquad \int_{\mathbb{R}^2} |f_p| \leq C,  
    \end{split}
\end{equation}
\begin{equation}
\label{33}
    \int_{\mathbb{R}^2} |R_{\varepsilon_p}^{0,2}| \leq C,\qquad \int_{\mathbb{R}^2} |R_{\varepsilon_p}^{2,0}| \leq \frac{C}{\varepsilon_p^{2}}.
\end{equation}
\end{proposition}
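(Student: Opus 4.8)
The plan is to bound the $L^1$ norm of each of $f_p$, $R^{1,1}_{\varepsilon_p}$, $R^{0,2}_{\varepsilon_p}$ and $R^{2,0}_{\varepsilon_p}$ by estimating their summands one at a time. Each summand is a pointwise product of functions among $N_p,\partial_1\theta_p,\partial_2\theta_p,\partial_1 N_p,\partial_2 N_p$, a cubic term $N_p^3$, or the correction $h_1h_2$, each carrying an explicit power of $\varepsilon_p$; the basic mechanism is Cauchy--Schwarz, $\int_{\Ra^2}|fg|\le\|f\|_{L^2}\|g\|_{L^2}$, fed by the uniform bounds already established. I first collect these. From \eqref{controle de N en normel L2} and \eqref{Ntheta} one has $\|N_p\|_{L^2}\le C$ and $\|\partial_1\theta_p\|_{L^2}\le C$, and from \eqref{coool} $\|\partial_1 N_p\|_{L^2}+\|\partial_2\theta_p\|_{L^2}\le C$. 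The rescaling identities of Section~\ref{pifpafpouf} combined with \eqref{cool} give the weaker bound $\|\partial_2 N_p\|_{L^2}\le C\varepsilon_p^{-1}$. Finally, since $E(\rho_p,\phi_p)\sim\varepsilon_p$ by \eqref{energie-moment} and \eqref{epsilon}, estimate \eqref{2.3} yields $\|N_p\|_{L^\infty}\le C\varepsilon_p^{-3/2}$, while Lemma~\ref{régularité} gives, after rescaling, $\|\partial_1\theta_p\|_{L^\infty}\le C\varepsilon_p^{-2}$, $\|\partial_2\theta_p\|_{L^\infty}\le C\varepsilon_p^{-3}$ and analogous $L^\infty$ bounds for the derivatives of $N_p$.

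The terms not involving $h_1h_2$ are then routine. For $f_p$ and $R^{1,1}_{\varepsilon_p}=\gamma c_pN_p\partial_2\theta_p$ (see \eqref{4.18}), every summand is a product of two functions among $N_p,\partial_1\theta_p,\partial_2\theta_p$, so Cauchy--Schwarz and the uniform $L^2$ bounds give $\int|f_p|+\int|R^{1,1}_{\varepsilon_p}|\le C$ at once. For the cubic contributions I use $\int_{\Ra^2}|N_p|^3\le\|N_p\|_{L^\infty}\|N_p\|_{L^2}^2\le C\varepsilon_p^{-3/2}$, so that the prefactors $\gamma$ or $\varepsilon_p^2$ in \eqref{4.16}--\eqref{4.17} make these terms bounded, respectively $O(\varepsilon_p^{1/2})$. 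The derivative–squared terms are handled by \eqref{coool} and $\|\partial_2 N_p\|_{L^2}\le C\varepsilon_p^{-1}$: for instance $\varepsilon_p^2\int(\partial_2 N_p)^2\le C$ and $\varepsilon_p^4\int(\partial_2 N_p)^2\le C\varepsilon_p^2$. Tracking the powers of $\varepsilon_p$ one finds $\int|R^{0,2}_{\varepsilon_p}|\le C$, whereas in $R^{2,0}_{\varepsilon_p}$ the factor $\frac{c_p-1}{\varepsilon_p^2}=O(1)$ keeps the leading term bounded; the only genuinely large contribution is $h_1h_2$, which is what produces the $\varepsilon_p^{-2}$ on the right of \eqref{33}.

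The main point is therefore the correction $\frac{1}{\gamma\varepsilon_p^2}h_1h_2$ in $R^{0,2}_{\varepsilon_p}$ and $\frac{1}{\gamma\varepsilon_p^4}h_1h_2$ in $R^{2,0}_{\varepsilon_p}$: both bounds in \eqref{33} reduce to the single claim $\int_{\Ra^2}|h_1h_2|\le C\varepsilon_p^2$. By \eqref{alpha} one has $|h_1|\le C\varepsilon_p^2|N_p|$ with the relevant factor bounded thanks to \eqref{2.3}, hence $\|h_1\|_{L^2}\le C\varepsilon_p^2$ by the first toolbox bound, and by Cauchy--Schwarz it remains only to prove $\|h_2\|_{L^2}\le C$, with $h_2$ as in \eqref{beta}. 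The hard part is precisely here: the naive route would expand $h_2$ through the second derivatives of $N_p$, for which no $L^2$ bound is yet available. I would instead estimate \eqref{beta} termwise. Its leading part $N_p-c_p\partial_1\theta_p=(N_p-\partial_1\theta_p)+(1-c_p)\partial_1\theta_p$ is $O(\varepsilon_p^{1/4})$ in $L^2$ by \eqref{Ntheta} and $1-c_p\le\varepsilon_p^2$; the quadratic and higher-order terms, each carrying at least one factor $\varepsilon_p^2$, are controlled by interpolating the uniform $L^2$ bounds against the $L^\infty$ bounds above through $\|f\|_{L^4}^2\le\|f\|_{L^\infty}\|f\|_{L^2}$, for example $\varepsilon_p^2\|(\partial_1\theta_p)^2\|_{L^2}\le\varepsilon_p^2\|\partial_1\theta_p\|_{L^\infty}\|\partial_1\theta_p\|_{L^2}\le C$, the powers of $\varepsilon_p$ always dominating the growth of the $L^\infty$ norms. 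This gives $\|h_2\|_{L^2}\le C$, hence $\int_{\Ra^2}|h_1h_2|\le C\varepsilon_p^2$, and completes all four estimates.
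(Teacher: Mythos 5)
Your proof is correct and follows essentially the same route as the paper: H\"older/Cauchy--Schwarz fed by the $L^2$ bounds from the Pohozaev identities and the rescaling identities, the $L^\infty$ bounds from \eqref{2.3} and Lemma \ref{régularité}, and a termwise expansion of $h_2$ from \eqref{beta} so as to avoid second derivatives of $N_p$ --- exactly the device the paper itself uses for $h_1h_2$ in Lemma \ref{lemme6.1}. One harmless imprecision: the cubic term $\gamma N_p^3 l_1$ in \eqref{4.16} carries no factor $\varepsilon_p^2$, so in $L^1$ it is only $O(\varepsilon_p^{-3/2})$, not bounded as your wording suggests, but this still lies within the allowed $C\varepsilon_p^{-2}$ for $R^{2,0}_{\varepsilon_p}$, so \eqref{33} is unaffected.
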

\begin{proof}
Proposition \ref{controle reste} is a direct consequence of \eqref{2.3}, identities in subsection \ref{pifpafpouf}, estimates in subsection \ref{pouuuuuuuuuuuuuuuuuuuuuuuuuuuf}, \eqref{controle de N  en normel L2} and Hölder's inequality.
For example we have
\begin{align*}
\int_{\Ra^2} \big(\partial_2\theta_p\big)^2= \frac{1}{K(1)\gamma^2\varepsilon_p^3}\int_{\Ra^2}(\partial_2 \phi_p)^2\leq     C,
\end{align*}
thus
\begin{align*}
    \int_{\mathbb{R}^2} | R_{\varepsilon_p}^{1,1} |\leq C \|N_p\|_{L^2(\mathbb{R}^2)}\|\p_2\theta_p\|_{L^2(\mathbb{R}^2)}\leq C.
\end{align*} Similarly we obtain the estimate on $f_p$ and \eqref{33}.  
\end{proof}
\subsection{Kernel estimates}
In order to use \eqref{equationrecurrence} we need to control $K_{\varepsilon_p}^{i,j}$. In this section we use computations of \cite{BGS2} section 5.
\begin{proposition}(\cite{BGS2} lemma 5.1)
\label{kernel controlle}
Let $0 \leq s <1$, there exists a constant $C(s)$ depending possibly on $s$, but not on $p$, such that
\begin{equation}
\label{kernerlss}
   ||K_{\varepsilon_p}^{2,0}||_{\dot{H}^{s}(\mathbb{R}^2)} \leq C(s) (1+ \varepsilon_p^{\frac{1}{2}-2s}), \, ||K_{\varepsilon_p}^{1,1}||_{\dot{H}^{s}(\mathbb{R}^2)} \leq C(s) (1+ \varepsilon_p^{-\frac{1}{2}-2s})  , 
\end{equation}
\begin{equation}
\label{kernelsss}
 ||K_{\varepsilon_p}^{0,2}||_{\dot{H}^{s}(\mathbb{R}^2)} \leq C(s) (1+ \varepsilon_p^{-\frac{3}{2}-2s}). 
 \end{equation}
  Thus we have 
 \begin{equation}
\label{kernels5.1}
    ||K_{\varepsilon_p}^{2,0}||_{H^{s}(\mathbb{R}^2)}+\varepsilon_p||K_{\varepsilon_p}^{1,1}||_{H^{s}(\mathbb{R}^2)}+\varepsilon_p^2||K_{\varepsilon_p}^{0,2}||_{H^{s}(\mathbb{R}^2)} \leq C(s),
\end{equation}
for any $0 \leq s\leq \frac{1}{4}.$
\end{proposition}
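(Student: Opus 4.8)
The plan is to compute each homogeneous norm directly from the definition of $\dot H^s$ given above: by Plancherel,
\[
\|K_{\varepsilon_p}^{i,j}\|_{\dot H^s(\Ra^2)}^2 = \int_{\Ra^2} |\xi|^{2s}\,\frac{\xi_1^{2i}\xi_2^{2j}}{Q(\xi)^2}\,d\xi ,
\]
with $\widehat{K_{\varepsilon_p}^{i,j}}$ as in \eqref{bientotfini} and $Q$ as in \eqref{Q}. The first observation is the factorization $Q(\xi)=|\xi|^2+(\xi_1^2+\varepsilon_p^2\xi_2^2)^2$, together with the elementary lower bounds $Q\ge|\xi|^2$, $Q\ge(\xi_1^2+\varepsilon_p^2\xi_2^2)^2\ge \xi_1^4+\varepsilon_p^4\xi_2^4$, which control the denominator on the various frequency regions.

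The key step is the anisotropic rescaling adapted to the KP balance, $\xi_1=u/\varepsilon_p$, $\xi_2=v/\varepsilon_p^2$. One then has $Q=\varepsilon_p^{-4}\big(\varepsilon_p^2u^2+v^2+(u^2+v^2)^2\big)$, the Jacobian is $\varepsilon_p^{-3}$, and collecting every power of $\varepsilon_p$ converts the norm into
\[
\|K_{\varepsilon_p}^{i,j}\|_{\dot H^s}^2=\varepsilon_p^{\,5-4s-2i-4j}\int_{\Ra^2}(\varepsilon_p^2u^2+v^2)^s\,\frac{u^{2i}v^{2j}}{\big(\varepsilon_p^2u^2+v^2+(u^2+v^2)^2\big)^2}\,du\,dv .
\]
For $(i,j)\in\{(2,0),(1,1),(0,2)\}$ the half-exponent $\tfrac12(5-4s-2i-4j)$ equals $\tfrac12-2s$, $-\tfrac12-2s$ and $-\tfrac32-2s$ respectively, i.e. exactly the powers of $\varepsilon_p$ announced in \eqref{kernerlss}–\eqref{kernelsss}.

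It remains to estimate the rescaled integral uniformly in $p$. At infinity the denominator grows like $(u^2+v^2)^4$ while the numerator grows like $(u^2+v^2)^{s+i+j}$, so integrability forces $s+i+j<3$; for each of the three pairs this is precisely the hypothesis $s<1$, which is where that restriction enters. Near the origin the quadratic part $\varepsilon_p^2u^2+v^2$ dominates, and the integral is genuinely singular: splitting the domain according to whether $v^2\ge\varepsilon_p^2u^2$ or $v^2<\varepsilon_p^2u^2$ and estimating each piece separately yields a bound of the form $C\big(1+\varepsilon_p^{-(5-4s-2i-4j)}\big)$ for the rescaled integral. Multiplying back by the prefactor $\varepsilon_p^{\,5-4s-2i-4j}$ produces exactly $C\,(1+\varepsilon_p^{\,(5-4s-2i-4j)/2})^2$, which is \eqref{kernerlss}–\eqref{kernelsss}. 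I expect this near-origin region decomposition, forced by the anisotropy of $\varepsilon_p^2u^2+v^2$, to be the main technical obstacle, since one must track the two competing powers of $\varepsilon_p$ and verify that the constants are uniform in $p$; this is precisely the computation carried out in \cite{BGS2} section 5.

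Finally, \eqref{kernels5.1} follows by assembling the three estimates. Using $(1+|\xi|^2)^s\le 1+|\xi|^{2s}$ for $0\le s\le 1$ gives $\|f\|_{H^s}^2\le\|f\|_{\dot H^0}^2+\|f\|_{\dot H^s}^2$; applying this to each kernel with the respective weights $1,\varepsilon_p,\varepsilon_p^2$ and restricting to $s\le\tfrac14$, every resulting exponent becomes $\tfrac12-2s\ge 0$ (for instance $\varepsilon_p^2\cdot\varepsilon_p^{-3/2-2s}=\varepsilon_p^{1/2-2s}$, and likewise for the $\dot H^0$ parts obtained by taking $s=0$). Hence all negative powers of $\varepsilon_p$ are absorbed and the sum stays bounded uniformly in $p$, which establishes \eqref{kernels5.1}.
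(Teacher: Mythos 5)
Your reconstruction has the right architecture, and it is worth noting that the paper itself contains no proof of this proposition: it is quoted from \cite{BGS2} (Lemma 5.1), so what you wrote is a reconstruction of a computation the paper outsources. The Plancherel identity, the factorization $Q(\xi)=|\xi|^2+(\xi_1^2+\varepsilon_p^2\xi_2^2)^2$, the substitution $\xi_1=u/\varepsilon_p$, $\xi_2=v/\varepsilon_p^2$ with Jacobian $\varepsilon_p^{-3}$ and prefactor $\varepsilon_p^{5-4s-2i-4j}$ (whose halves are exactly the exponents $\tfrac12-2s$, $-\tfrac12-2s$, $-\tfrac32-2s$ of \eqref{kernerlss}--\eqref{kernelsss}), the identification of $s+i+j<3$, i.e.\ $s<1$, as the integrability condition at infinity, and the deduction of \eqref{kernels5.1} from the homogeneous bounds via $(1+|\xi|^2)^s\le 1+|\xi|^{2s}$, the weights $1,\varepsilon_p,\varepsilon_p^2$ and $\varepsilon_p\le 1$, are all correct.

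The genuine gap is in the one step you assert rather than prove: the claimed bound $C\big(1+\varepsilon_p^{-(5-4s-2i-4j)}\big)$ for the rescaled integral is \emph{false} at the endpoint $s=\tfrac14$ for $(i,j)=(2,0)$, and $s=\tfrac14$ is precisely the endpoint that \eqref{kernels5.1} (and its application in Lemma \ref{6.3}) claims to reach. The singular region is governed by the competition between the quadratic part and $(u^2+v^2)^2\simeq u^4$, i.e.\ by $|u|\lessgtr\varepsilon_p$, not only by your split $v^2\gtrless\varepsilon_p^2u^2$: on $\{\varepsilon_p\le u\le 1,\ |v|\le u^2\}$ the denominator is comparable to $u^8$ while $(\varepsilon_p^2u^2+v^2)^{1/4}u^4\ge |v|^{1/2}u^4$, so at $s=\tfrac14$ the rescaled integral is at least $c\int_{\varepsilon_p}^{1}u^{-1}\,du=c\log(1/\varepsilon_p)$. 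No refinement of the splitting can repair this, because the statement itself fails there: $\widehat{K^{2,0}_{\varepsilon_p}}$ converges pointwise to $\widehat K_0(\xi)=\xi_1^2/(|\xi|^2+\xi_1^4)$, the kernel \eqref{2.4}, and
\begin{equation*}
\|K_0\|_{\dot H^{1/4}(\Ra^2)}^2\;\ge\; c\int_1^{\infty}\Big(\int_{|\xi_1|\le\sqrt{\xi_2}}\xi_1^4\,d\xi_1\Big)\,\xi_2^{\frac12-4}\,d\xi_2\;=\;c\int_1^{\infty}\frac{d\xi_2}{\xi_2}\;=\;\infty,
\end{equation*}
so Fatou's lemma forces $\|K^{2,0}_{\varepsilon_p}\|_{\dot H^{1/4}}\rightarrow\infty$ as $\varepsilon_p\rightarrow 0$, contradicting \eqref{kernerlss} and \eqref{kernels5.1} at $s=\tfrac14$. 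Your argument (and the proposition) is correct for $s\ne\tfrac14$ in \eqref{kernerlss} and for $0\le s<\tfrac14$ in \eqref{kernels5.1}, with $C(s)\rightarrow\infty$ as $s\rightarrow\tfrac14$; this restriction should be made explicit. It is harmless downstream: Lemma \ref{6.3} then yields $\|N_p\|_{L^q(\Ra^2)}\le C(q)$ for $2\le q<\tfrac83$ rather than $q\le\tfrac83$, and the bootstrap of Section 6 closes with strict inequalities throughout.
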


\begin{proposition} (\cite{BGS2} lemma 5.2) 
\label{inégalitéconvolution}
let $1<q<\infty$ and $0\leq i,j \leq 4$ integers such that $2 \leq i+j \leq 4$ we denote by 
$$
\kappa_{i,j} = max\left\{ i+2j-4,0\right\}. 
$$
Then  there exists a constant $C(q)$, depending possibly on $q$, but not on $p$ such that 
$$
|| K_{\varepsilon_p}^{i,j} *f ||_{L^q(\mathbb{R}^2)} \leq \frac{C(q)}{\varepsilon_p^{\kappa_{i,j}}} ||f||_{L^q(\mathbb{R}^2)},
$$
for any $f \in L^q(\mathbb{R}^2)$ and  $\varepsilon_p>0$.
\end{proposition}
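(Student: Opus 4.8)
The plan is to deduce the estimate from Lizorkin's multiplier theorem (Theorem \ref{lizorkin}) applied to the rescaled symbol $m_{i,j}(\xi):=\varepsilon_p^{\kappa_{i,j}}\widehat{K_{\varepsilon_p}^{i,j}}(\xi)$, where $\widehat{K_{\varepsilon_p}^{i,j}}$ is given in \eqref{bientotfini}. Since convolution with $K_{\varepsilon_p}^{i,j}$ is exactly the Fourier multiplier with symbol $\varepsilon_p^{-\kappa_{i,j}}m_{i,j}$, the claimed inequality is equivalent to the uniform bound $\sup_{p}M(m_{i,j})<\infty$, where $M(\cdot)$ is the Lizorkin constant of Theorem \ref{lizorkin}. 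The symbol is bounded and $C^2(\Ra^2\setminus\{0\})$ since $Q$ is a positive polynomial away from the origin, so only the four Lizorkin quantities need to be controlled. I would first record the factorization $Q(\xi)=|\xi|^2+(\xi_1^2+\varepsilon_p^2\xi_2^2)^2$ of the denominator \eqref{Q}, together with the elementary pointwise lower bounds $Q\geq\xi_1^2$, $Q\geq\xi_2^2$, $Q\geq\xi_1^4$, $Q\geq\varepsilon_p^4\xi_2^4$ and $Q\geq 4\varepsilon_p^2\xi_1^2\xi_2^2$ (the last from $(a+b)^2\geq 4ab$).

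The core of the proof is the zeroth-order Lizorkin condition, namely the pointwise bound
\[
\varepsilon_p^{\kappa_{i,j}}|\xi_1|^i|\xi_2|^j\leq C\,Q(\xi),\qquad\xi\in\Ra^2 .
\]
I would obtain it by writing $Q=Q^{a+b+c+d+e}$ with nonnegative weights $a+b+c+d+e=1$ and bounding each factor below by one of the five estimates above, which yields $Q\geq c\,\varepsilon_p^{4d+2e}|\xi_1|^{2a+4c+2e}|\xi_2|^{2b+4d+2e}$. Matching the three exponents to $(i,j,\kappa_{i,j})$ reduces to a small feasibility problem for $(a,b,c,d,e)$; a short computation shows that the minimal value of $4d+2e$ subject to $2a+4c+2e=i$ and $2b+4d+2e=j$ equals exactly $\max(i+2j-4,0)=\kappa_{i,j}$, so the required weights exist and the value of $\kappa_{i,j}$ is sharp. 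It is worth stressing that no single anisotropic dilation $\xi\mapsto(\lambda_1\xi_1,\lambda_2\xi_2)$ can normalize $\varepsilon_p$ to $1$, because the quadratic part $|\xi|^2$ and the quartic part $(\xi_1^2+\varepsilon_p^2\xi_2^2)^2$ of $Q$ scale incompatibly; this is precisely why the direct interpolation between the lower bounds, rather than a reduction to $\varepsilon_p=1$, is needed.

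The remaining Lizorkin conditions, for $(k_1,k_2)\in\{(1,0),(0,1),(1,1)\}$, I would handle by differentiation. Using $\p_1Q=2\xi_1(1+2\xi_1^2+2\varepsilon_p^2\xi_2^2)$ and $\p_2Q=2\xi_2(1+2\varepsilon_p^2(\xi_1^2+\varepsilon_p^2\xi_2^2))$, each application of $\xi_1\p_1$ or $\xi_2\p_2$ to a symbol of the form $\varepsilon_p^{\kappa_{i,j}}\xi_1^{i'}\xi_2^{j'}Q^{-n}$ produces a finite sum of symbols of the same quasi-homogeneous type, the prefactor $\xi_1^{k_1}\xi_2^{k_2}$ exactly restoring the degree lost by differentiation. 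Absorbing one extra factor of $Q$ in the denominator by means of the appropriate lower bound among $Q\geq\xi_1^2$, $Q\geq\xi_1^4$, $Q\geq 4\varepsilon_p^2\xi_1^2\xi_2^2$, every such term is dominated by the zeroth-order quantity above, hence uniformly bounded. This gives $M(m_{i,j})\leq C$ independently of $p$, and Lizorkin's theorem yields the proposition.

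The main obstacle is the bookkeeping for the mixed term $\xi_1\xi_2\p_1\p_2 m_{i,j}$: differentiating the quartic part of $Q$ twice couples the two frequency variables, and one must check that each resulting monomial-over-$Q^{n}$ term is dominated by the correct combination of the lower bounds on $Q$. To avoid repeating the same estimate for all four conditions, I would isolate once a short reduction lemma stating that $\xi_1\p_1$ and $\xi_2\p_2$ preserve the class of symbols $\{\varepsilon_p^{\kappa}\xi_1^{i'}\xi_2^{j'}Q^{-n}\}$ with controlled degree, so that every Lizorkin condition follows from the single sharp estimate of the second paragraph.
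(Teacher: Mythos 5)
Your proposal is correct and takes essentially the same route as the proof this paper relies on: the statement is quoted from \cite{BGS2} (Lemma 5.2), whose proof is exactly an application of Lizorkin's multiplier theorem — the reason the paper records Theorem \ref{theorem5.1} in Section 2 — to the symbols $\varepsilon_p^{\kappa_{i,j}}\xi_1^i\xi_2^j/Q(\xi)$, with the Lizorkin conditions checked through pointwise lower bounds on $Q$ of the kind you list. Your verification (the weight-matching argument giving $\varepsilon_p^{\kappa_{i,j}}|\xi_1|^i|\xi_2|^j\leq C\,Q(\xi)$, and the stability of the symbol class under $\xi_1\partial_1$, $\xi_2\partial_2$) is sound and uniform in $p$, so nothing is missing.
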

\begin{rem}
As a consequence
$N_p$ and all its derivatives, belong to $L^q(\mathbb{R}^2)$ for any $1<q<2$. Indeed, we have 
\begin{equation}
N_p=K_{\varepsilon_p}^{2,0} \ast f_p + \sum_{i+j=2} \varepsilon_p^2  K_{\varepsilon_p}^{i,j} \ast R_{\varepsilon_p}^{i,j}.
\end{equation}
Thus, using lemma \ref{régularité} we obtain
$$
\p^{\alpha}N_p=K_{\varepsilon_p}^{2,0} \ast \p^{\alpha}f_p + \sum_{i+j=2} \varepsilon_p^2  K_{\varepsilon_p}^{i,j} \ast \p^{\alpha}R_{\varepsilon_p}^{i,j}.
$$
Then, combining the definition of $R_{\varepsilon_p}^{i,j}$, lemma \ref{régularité} with the proposition above, we have $\p^{\alpha}N_p \in L^q$ for any $1<q<2$. 
Therefore using \eqref{controle theta},  $\partial_1 \theta_p$ and $\partial_2 \theta_p$ belong to $L^q(\mathbb{R}^2)$ for any $1<q<2.$ Finally \eqref{4.13} and \eqref{camarche2} give,  respectively, $\p_1\theta_p \in W^{k,q}$ and $\p_2\theta_p \in W^{k,q}$ for any $k \in \mathbb{N}.$ 
\end{rem}
\section{Bounds in Sobolev spaces}
This section is devoted to the proof by induction of the following proposition:
\begin{proposition}
\label{proposition 6.1}
There exists $p_0>0$ such that for any $0\leq p \leq p_0$,  $\alpha \in \mathbb{N}^2$ and $1<q<\infty$, there exists $C(q, \alpha)$ such that
\begin{equation}
\begin{split}
\label{le truc à démontrer par recurrence}
    &||\partial^{\alpha} N_p||_{L^q(\mathbb{R}^2)}+||\partial_1\partial^{\alpha} N_p||_{L^q(\mathbb{R}^2)}+ ||\partial_2\partial^{\alpha} N_p||_{L^q(\mathbb{R}^2)}\\+&||\partial_1^2\partial^{\alpha} N_p||_{L^q(\mathbb{R}^2)}+\varepsilon_p||\partial_1\partial_2\partial^{\alpha} N_p||_{L^q(\mathbb{R}^2)}+\varepsilon_p^2||\partial_2^2\partial^{\alpha} N_p||_{L^q(\mathbb{R}^2)} \leq C(q, \alpha).
    \end{split}
\end{equation}
\end{proposition}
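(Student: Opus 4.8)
The plan is to argue by induction on $|\alpha|$, the only structural input being the convolution identity \eqref{equationrecurrence}. Differentiating it gives
\[
\partial^\alpha N_p=K_{\varepsilon_p}^{2,0}\ast\partial^\alpha f_p+\sum_{i+j=2}\varepsilon_p^2\,K_{\varepsilon_p}^{i,j}\ast\partial^\alpha R_{\varepsilon_p}^{i,j},
\]
and the six norms in \eqref{le truc à démontrer par recurrence} are exactly the $L^q$-norms of $D\partial^\alpha N_p$ for the six anisotropic operators $D\in\{\mathrm{Id},\partial_1,\partial_2,\partial_1^2,\varepsilon_p\partial_1\partial_2,\varepsilon_p^2\partial_2^2\}$. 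Each $D$ commutes with the convolutions, and by \eqref{bientotfini} one has $\partial_1^a\partial_2^b K_{\varepsilon_p}^{i,j}=\pm K_{\varepsilon_p}^{i+a,j+b}$, so applying $D$ turns every summand into $\varepsilon_p^{m}K_{\varepsilon_p}^{i',j'}\ast\partial^\alpha(\,\cdot\,)$, to which Proposition \ref{inégalitéconvolution} applies. The first task is a bookkeeping check: the power of $\varepsilon_p$ gained from $D$ together with the prefactor $\varepsilon_p^2$ in \eqref{equationrecurrence} must compensate the loss $\varepsilon_p^{-\kappa_{i',j'}}$. Using $\kappa_{i',j'}=\max\{i'+2j'-4,0\}$ and $i+j=2$, a direct count shows that the weights $1,\varepsilon_p,\varepsilon_p^2$ are tuned precisely so that every surviving power of $\varepsilon_p$ is nonnegative; equivalently, the six symbols $D\,\widehat{K_{\varepsilon_p}^{2,0}}$ satisfy the hypotheses of Theorem \ref{lizorkin} uniformly in $p$, and each remainder contribution from $R_{\varepsilon_p}^{i,j}$ carries, after cancellation, an additional factor $\varepsilon_p^2$ (for the top-order part that lands on the same norm) or lands on a strictly lower norm.

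Next I would estimate the right-hand sides. The quantities $f_p$ and $R_{\varepsilon_p}^{i,j}$ are finite sums of products of $N_p$, $\theta_p$ and their derivatives (see $f_p$, \eqref{4.16}, \eqref{4.17}); applying $\partial^\alpha$ and Leibniz, each product splits into a top-order factor and factors of order $<|\alpha|$. The low-order factors are placed in $L^\infty$ and controlled by the induction hypothesis through the Sobolev embedding \eqref{injection ck sobolev}, while the top-order factor is kept in $L^q$. Whenever a top-order $\theta_p$-derivative appears, Proposition \ref{lemme 4.6} (inequality \eqref{4.26}) trades it for the corresponding $N_p$-derivative of the same order, plus genuinely lower-order terms weighted by $\varepsilon_p^2$. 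The remainder terms, carrying the surplus $\varepsilon_p^2$ identified above, are then absorbed into the left-hand side for $p$ small enough; the uniform $L^\infty$ bounds \eqref{controleee} of Lemma \ref{régularité} guarantee that the coefficients arising from $K(\rho_p)$, $K'(\rho_p)$ and the remainders $l_1,h_1,h_2$ stay bounded independently of $p$.

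The main obstacle is closing the estimate for the leading norm $\|\partial^\alpha N_p\|_{L^q}$ itself. The top-order contribution of $\partial^\alpha f_p=\gamma\,\partial^\alpha\big(N_p\partial_1\theta_p+\tfrac{g''(1)}2N_p^2+\tfrac12(\partial_1\theta_p)^2\big)$ produces terms such as $\gamma\,K_{\varepsilon_p}^{2,0}\ast\big(N_p\,\partial^\alpha\partial_1\theta_p\big)$, which via Proposition \ref{lemme 4.6} reproduce $\|\partial^\alpha N_p\|_{L^q}$ with an $O(1)$ constant $\gamma C(q)\|N_p\|_{L^\infty}$ that is \emph{not} small (since $N_p\to N_0\neq0$), and hence cannot be absorbed. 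The resolution is one integration by parts: writing $\alpha=\alpha'+e_k$ and $N_p\partial_k u=\partial_k(N_pu)-(\partial_kN_p)u$, the derivative $\partial_k$ is transferred onto the kernel, and $\partial_kK_{\varepsilon_p}^{2,0}=\pm K_{\varepsilon_p}^{k',\,\cdot}$ still has $\kappa=0$, hence is an $L^q$-bounded multiplier; the remaining factor $\partial^{\alpha'}\partial_1\theta_p$ is of order $|\alpha|$ and is controlled, again through Proposition \ref{lemme 4.6}, by $\|\partial^{\alpha'}N_p\|_{L^q}$ of order $|\alpha|-1$, i.e. by the induction hypothesis. Once $\|\partial^\alpha N_p\|_{L^q}$ is bounded this way, the other five norms follow directly, since their right-hand sides involve only $\|\partial^\alpha N_p\|_{L^q}$, lower-order norms, and remainder terms with the saving $\varepsilon_p^2$. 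Finally, the base case $\alpha=0$ does not allow this peeling and is instead handled by an integrability bootstrap: starting from the uniform bound $N_p\in L^2(\mathbb{R}^2)$ of Proposition \ref{limitecommune}, one combines the kernel estimates of Propositions \ref{kernel controlle} and \ref{inégalitéconvolution}, the $L^1$-bounds of Proposition \ref{controle reste}, Young's inequality and the Sobolev embeddings \eqref{lemme9}--\eqref{theorème de morrey} to reach every exponent $1<q<\infty$.
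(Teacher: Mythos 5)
Your skeleton is the paper's own: differentiate the convolution identity \eqref{equationrecurrence}, use the anisotropic kernel bookkeeping of Proposition \ref{inégalitéconvolution} (the weights $1,\varepsilon_p,\varepsilon_p^2$ are indeed tuned to the indices $\kappa_{i,j}$), estimate $\partial^\alpha f_p$ and $\partial^\alpha R^{i,j}_{\varepsilon_p}$ by Leibniz plus the elliptic estimate \eqref{4.26}, and bootstrap the base case $\alpha=0$ from low exponents; all of this matches the paper's preliminary lemmas and its first step. The genuine problem is your third paragraph. The ``main obstacle'' you describe is spurious: since \eqref{le truc à démontrer par recurrence} at level $k$ contains the norms $\|\partial_1\partial^{\alpha'}N_p\|_{L^q}$ and $\|\partial_2\partial^{\alpha'}N_p\|_{L^q}$ for all $|\alpha'|\le k$, every norm $\|\partial^{\alpha}N_p\|_{L^q}$ with $|\alpha|=k+1$ is \emph{already furnished by the induction hypothesis}. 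This self-strengthening feature is exactly what the paper exploits: its inductive step first proves $\Gamma_p(q,\alpha)\le C$ for all $|\alpha|\le k+1$ (claim \eqref{6.18}) from \eqref{4.26}, summation over $\alpha$, and absorption of only the $\varepsilon_p^2$-weighted sum; then bounds the weighted remainders (Lemma \ref{lemma 7.7}); then concludes from the differentiated convolution equation. No circularity ever arises and no integration by parts is needed.

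Moreover, your substitute argument does not close as written. Having declined to use the order-$(k+1)$ information, the circularity you eliminated from $f_p$ reappears through the remainders, where your dichotomy (``extra factor $\varepsilon_p^2$, or a strictly lower norm'') is false. Indeed $R^{2,0}_{\varepsilon_p}$ in \eqref{4.16} contains the \emph{unweighted} term $\tfrac12(\partial_2\theta_p)^2$ and the term $h_1h_2/(\gamma\varepsilon_p^4)$. For the first, the top-order Leibniz contribution to $\varepsilon_p^2\|\partial^\alpha R^{2,0}_{\varepsilon_p}\|_{L^q}$ is $\varepsilon_p^2\|\partial_2\theta_p\,\partial^\alpha\partial_2\theta_p\|_{L^q}\le\big(\varepsilon_p\|\partial_2\theta_p\|_{L^\infty}\big)\big(\varepsilon_p\|\partial^\alpha\partial_2\theta_p\|_{L^q}\big)$; since Proposition \ref{lemme 4.6} controls $\partial_2\theta_p$ only with the weight $\varepsilon_p$ (see \eqref{controle theta}), both factors are merely $O(1)$ multiples of $\Gamma$-type quantities, so the explicit $\varepsilon_p^2$ is exactly consumed, nothing lands on a lower norm, and by \eqref{4.26} one is left with $C\|\partial^\alpha N_p\|_{L^q}$ up to controlled terms, with a constant that is not small --- precisely the non-absorbable term you objected to in $f_p$. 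The second term behaves the same way: $\varepsilon_p^{-2}\partial^\alpha(h_1h_2)$ contains $O(1)$ multiples of $\partial^\alpha\partial_1\theta_p$ and of $\partial^\alpha N_p$ themselves. Your integration by parts cannot repair these, because transferring a $\partial_2$ onto the relevant kernels increases $\kappa$ (for instance $\partial_2K_{\varepsilon_p}^{0,2}=\pm K_{\varepsilon_p}^{0,3}$ with $\kappa_{0,3}=2$) and costs negative powers of $\varepsilon_p$ that you do not have. The repair is structural rather than technical: set up the inductive step as the paper does, observe that level $k$ already bounds all $L^q$ norms of derivatives of $N_p$ of order $k+1$, prove \eqref{6.18} first, and then your second paragraph finishes the proof with no obstacle left to resolve.
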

We have the following consequence. 
\begin{thm}\label{bornes sobolev}
There exists $p_0>0$ such that for any $0\leq p \leq p_0$, $k \in \mathbb{N}$ and $1<q\leq \infty$ there exists $C(k,q)$  such that
\begin{equation}
\label{11}
  ||N_p||_{W^{k,q}(\mathbb{R}^2)} +||\partial_1 \theta_p||_{W^{k,q}(\mathbb{R}^2)}+\varepsilon_p||\partial_2 \theta_p||_{W^{k,q}(\mathbb{R}^2)} \leq  C(k,q).
\end{equation}
\end{thm}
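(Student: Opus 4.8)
The plan is to deduce Theorem \ref{bornes sobolev} from Proposition \ref{proposition 6.1}, Proposition \ref{lemme 4.6} and the Sobolev embedding \eqref{injection ck sobolev}. Proposition \ref{proposition 6.1} already furnishes, for every $\alpha \in \mathbb{N}^2$ and every $1<q<\infty$, the bound $\|\partial^\alpha N_p\|_{L^q(\mathbb{R}^2)} \leq C(q,\alpha)$ uniformly in $p \leq p_0$, hence $\|N_p\|_{W^{k,q}(\mathbb{R}^2)} \leq C(k,q)$ for all $k\in\mathbb{N}$ and $1<q<\infty$. The first thing I would do is upgrade these finite-$q$ bounds to the $L^\infty$ scale: applying \eqref{injection ck sobolev} with $d=2$, say $q=3$ and $m=k+1$ so that $(m-k)q=3>2$, gives $\|N_p\|_{C^k_0(\mathbb{R}^2)} \leq C\|N_p\|_{W^{k+1,3}(\mathbb{R}^2)} \leq C(k)$, i.e.
$$
\|\partial^\beta N_p\|_{L^\infty(\mathbb{R}^2)} \leq C(\beta), \qquad \beta \in \mathbb{N}^2,
$$
uniformly in $p$. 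This both settles the term $\|N_p\|_{W^{k,\infty}}$ and, crucially, provides the $L^\infty$ control on the derivatives of $N_p$ needed to run the induction for $\theta_p$.

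Next I would bound $\theta_p$ by induction on $|\alpha|$ using Proposition \ref{lemme 4.6}, where $\Gamma_p(q,\alpha)=\|\partial^\alpha\partial_1\theta_p\|_{L^q}+\varepsilon_p\|\partial^\alpha\partial_2\theta_p\|_{L^q}$. The base case $\alpha=0$ is exactly \eqref{controle theta}, which together with $\|N_p\|_{L^q}\leq C(q)$ yields $\Gamma_p(q,0)\leq C(q)$. For the inductive step I would use the recursion \eqref{4.26} and isolate the $\beta=0$ summand:
$$
\Gamma_p(q,\alpha) \leq C(q,\alpha)\|\partial^\alpha N_p\|_{L^q} + C(q,\alpha)\varepsilon_p^2\|N_p\|_{L^\infty}\Gamma_p(q,\alpha) + C(q,\alpha)\varepsilon_p^2\sum_{0<\beta<\alpha}\|\partial^\beta N_p\|_{L^\infty}\Gamma_p(q,\alpha-\beta).
$$
Since $\|N_p\|_{L^\infty}\leq C$ and $\varepsilon_p\to 0$, for $p$ small enough one has $C(q,\alpha)\varepsilon_p^2\|N_p\|_{L^\infty}\leq \tfrac12$, so the middle term is absorbed into the left-hand side. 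The remaining terms are uniformly bounded: $\|\partial^\alpha N_p\|_{L^q}$ by the first step, $\|\partial^\beta N_p\|_{L^\infty}$ by the Sobolev step, and $\Gamma_p(q,\alpha-\beta)$ with $|\alpha-\beta|<|\alpha|$ by the induction hypothesis. This gives $\Gamma_p(q,\alpha)\leq C(q,\alpha)$ for all $\alpha$, hence $\|\partial_1\theta_p\|_{W^{k,q}}+\varepsilon_p\|\partial_2\theta_p\|_{W^{k,q}}\leq C(k,q)$ for $1<q<\infty$.

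Finally, to reach $q=\infty$ I would invoke \eqref{injection ck sobolev} once more: applying $W^{k+1,3}\hookrightarrow C^k_0$ to $\partial_1\theta_p$ and to $\partial_2\theta_p$ and using the finite-$q$ bounds just obtained (with $q=3$) gives $\|\partial_1\theta_p\|_{W^{k,\infty}}\leq C(k)$ and $\varepsilon_p\|\partial_2\theta_p\|_{W^{k,\infty}}\leq C\varepsilon_p\|\partial_2\theta_p\|_{W^{k+1,3}}\leq C(k)$, together with the analogous bound for $N_p$. Summing these estimates yields \eqref{11} for every $1<q\leq\infty$ and completes the proof. The only delicate point is the ordering of the steps: the $L^\infty$ bounds on the derivatives of $N_p$ must be established \emph{before} the induction on $\theta_p$, since they are precisely what makes the coefficient $\varepsilon_p^2\|N_p\|_{L^\infty}$ of the diagonal term $\Gamma_p(q,\alpha)$ small enough to be absorbed; without this the recursion \eqref{4.26} does not close.
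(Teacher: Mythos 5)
Your proof is correct and follows essentially the same route as the paper: Proposition \ref{proposition 6.1} plus a Sobolev embedding give the uniform $W^{k,q}$ and $C^k_0$ bounds on $N_p$, then the recursion \eqref{4.26} of Proposition \ref{lemme 4.6} is closed by induction on $|\alpha|$ starting from \eqref{controle theta}, and a final Sobolev embedding handles $q=\infty$. Your explicit isolation and absorption of the $\beta=0$ diagonal term (legitimate because $\Gamma_p(q,\alpha)$ is already known to be finite, as noted in the remark following Proposition \ref{lemme 4.6}) is in fact spelled out more carefully than in the paper, where this absorption is left implicit in the phrase ``by induction.''
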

\begin{proof}[Proof of theorem \ref{bornes sobolev} assuming proposition \ref{proposition 6.1}]
By proposition \ref{proposition 6.1}, we have for any $k \in \mathbb{N},\; 1<q<\infty,$\, there exists a constant $C(k,q)$ not depending on $p$ such that
\begin{equation}
\label{6.22222}
     ||N_p||_{W^{k,q}(\mathbb{R}^2)} \leq C(k,q).
\end{equation}
Thus using Sobolev embedding \eqref{theorème de morrey}, we obtain 
\begin{equation}
     \forall k \in \mathbb{N},\; ||N_p||_{C^{k}_0(\mathbb{R}^2)} \leq C(k,q).
\end{equation}
By proposition \ref{lemme 4.6} we have
\begin{equation}
\label{6.4}
    \Gamma_p(q,\alpha) \leq C(q, \alpha) \bigg( 1 + \varepsilon_p^2 \sum_{0 \leq \beta <\alpha }\Gamma_p(q,\alpha-\beta) \bigg).
\end{equation}
Combining \eqref{controle theta} and \eqref{6.22222}
, the term
$\Gamma_p(q,(0,0))$ is  bounded independently of $p$. Then by induction and \eqref{6.4} the quantity $\Gamma_p(q,\alpha)$  is bounded independently of $p$, for any $1<q<\infty,$ $\alpha \in \mathbb{N}^2$. Finally, using  Sobolev embedding  \eqref{theorème de morrey} this result  is also true for $q=\infty$.
\end{proof}
\paragraph{Preliminary } We begin the proof of proposition \ref{proposition 6.1}. First of all, we have:
\begin{lemme}\label{wouaf wouaf}
For any $1 < q < \infty $, there exists a constant $C(q)$, independent of $p$, such that
\begin{equation}
\label{recurrence1}
    ||N_p(\partial_1 \theta_p)^2||_{L^q(\mathbb{R}^2)}+\varepsilon_p^2||N_p(\partial_2 \theta_p)^2||_{L^q(\mathbb{R}^2)} \leq C(q) ||N_p||_{L^{3q}(\mathbb{R}^2)}^3,
\end{equation}
and
\begin{equation}
\label{recurrence2}
\begin{split}
    &|| (\partial_1 \theta_p)^2 ||_{L^q(\mathbb{R}^2)} + \varepsilon_p ||N_p \partial_2 \theta_p ||_{L^q(\mathbb{R}^2)}+ \varepsilon_p^2 ||(\partial_2 \theta_p)^2||_{L^q(\mathbb{R}^2)}+ ||N_p \partial_1 \theta_p ||_{L^q(\mathbb{R}^2)} \\& \leq C(q)||N_p||_{L^{2q}(\mathbb{R}^2)}^2.
    \end{split}
\end{equation}
\end{lemme}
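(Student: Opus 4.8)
The plan is to obtain both estimates directly from the elliptic control \eqref{controle theta} of Proposition \ref{lemme 4.6} combined with Hölder's inequality. The key point is that \eqref{controle theta}, applied at an arbitrary exponent $r$ with $1<r<\infty$, reads
$$
||\partial_1\theta_p||_{L^r(\mathbb{R}^2)} + \varepsilon_p ||\partial_2\theta_p||_{L^r(\mathbb{R}^2)} \leq C(r) ||N_p||_{L^r(\mathbb{R}^2)},
$$
so that every factor $\partial_1\theta_p$, and every factor $\varepsilon_p\partial_2\theta_p$, may be traded for a factor of $N_p$ measured in the \emph{same} $L^r$ norm. The powers of $\varepsilon_p$ standing in front of the various terms in \eqref{recurrence1} and \eqref{recurrence2} are precisely those needed to attach one $\varepsilon_p$ to each occurrence of $\partial_2\theta_p$; this is the structural observation that makes the lemma work.

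For \eqref{recurrence1} I would fix $r=3q$ and apply Hölder's inequality with three factors (each lying in $L^{3q}(\mathbb{R}^2)$, their product in $L^q(\mathbb{R}^2)$), writing
$$
||N_p(\partial_1\theta_p)^2||_{L^q(\mathbb{R}^2)} \leq ||N_p||_{L^{3q}(\mathbb{R}^2)}\,||\partial_1\theta_p||_{L^{3q}(\mathbb{R}^2)}^2
$$
and
$$
\varepsilon_p^2 ||N_p(\partial_2\theta_p)^2||_{L^q(\mathbb{R}^2)} \leq ||N_p||_{L^{3q}(\mathbb{R}^2)}\,\big(\varepsilon_p||\partial_2\theta_p||_{L^{3q}(\mathbb{R}^2)}\big)^2.
$$
Applying \eqref{controle theta} at the exponent $3q$ to the two $\theta_p$-factors on each line replaces them by $||N_p||_{L^{3q}(\mathbb{R}^2)}$, yielding the cube $C(q)||N_p||_{L^{3q}(\mathbb{R}^2)}^3$ claimed in \eqref{recurrence1}.

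The second estimate \eqref{recurrence2} is entirely analogous, now with $r=2q$ and a two-factor Hölder inequality. Each of the four summands is a product of two terms, each of which is one of $N_p$, $\partial_1\theta_p$ or $\varepsilon_p\partial_2\theta_p$; splitting it as $L^{2q}(\mathbb{R}^2)\times L^{2q}(\mathbb{R}^2)$ and invoking \eqref{controle theta} at the exponent $2q$ bounds it by $C(2q)||N_p||_{L^{2q}(\mathbb{R}^2)}^2$, and summing the four contributions gives \eqref{recurrence2}.

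I do not anticipate a genuine obstacle here: the statement is essentially a bookkeeping exercise combining Hölder's inequality with Proposition \ref{lemme 4.6}, the only point requiring attention being the verification that, in each term, the power of $\varepsilon_p$ exactly matches the number of $\partial_2\theta_p$ occurrences so that \eqref{controle theta} applies cleanly. Finiteness of all the quantities involved, which ensures the inequalities are not vacuous, is already guaranteed by the remark following Proposition \ref{inégalitéconvolution}.
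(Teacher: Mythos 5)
Your proposal is correct and is essentially identical to the paper's own proof: the paper also applies a three-factor H\"older inequality at exponent $3q$ for \eqref{recurrence1}, invokes \eqref{controle theta} to trade $\|\partial_1\theta_p\|_{L^{3q}}$ and $\varepsilon_p\|\partial_2\theta_p\|_{L^{3q}}$ for $\|N_p\|_{L^{3q}}$, and handles \eqref{recurrence2} by the analogous two-factor computation at exponent $2q$. Nothing is missing from your argument.
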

\begin{proof}
Hölder inequality and \eqref{controle theta} leads to
\begin{align*}
     &||N_p(\partial_1 \theta_p)^2||_{L^q(\mathbb{R}^2)}+\varepsilon_p^2||N_p(\partial_2 \theta_p)^2||_{L^q(\mathbb{R}^2)} \\& \leq  C(q) ||N_p||_{L^{3q}(\mathbb{R}^2)} \big( ||\partial_1 \theta_p ||^2_{L^{3q}(\mathbb{R}^2)}+ \varepsilon_p^2||\partial_2 \theta_p ||^2_{L^{3q}(\mathbb{R}^2)}
     \big)\\& \leq C(q) ||N_p||^3_{L^{3q}(\mathbb{R}^2)},
\end{align*}
a similar computations gives us  \eqref{recurrence2}.
\end{proof}
\begin{lemme} 
For any $1 < q < \infty $, there exists a constant $C(q)$, such that 
\label{lemme6.1}
\begin{equation}
\begin{split}
    \bigg|\bigg|\frac{h_1h_2}{\varepsilon_p^2}\bigg|\bigg|_{L^{q}(\mathbb{R}^2)} \leq  C(q) \bigg(& ||N_p||^2_{L^{2q}(\mathbb{R}^2)}+\varepsilon_p^2 ||N_p||^3_{L^{3q}(\mathbb{R}^2)} \\&+ \varepsilon_p^4 ||N_p||_{L^{\infty}(\mathbb{R}^2)} ||\partial_1N_p||^2_{L^{2q}(\mathbb{R}^2)}\\&+\varepsilon_p^6 ||N_p||_{L^{\infty}(\mathbb{R}^2)} ||\partial_2N_p||^2_{L^{2q}(\mathbb{R}^2)} \bigg),
    \end{split}
\end{equation}
\begin{equation}
   ||f_p||_{L^{q}(\mathbb{R}^2)}+\varepsilon_p ||R_{\varepsilon_p}^{1,1}||_{L^{q}(\mathbb{R}^2)} \leq C(q) ||N_p||_{L^{2q}(\mathbb{R}^2)}^2,
\end{equation}
\begin{equation}
\begin{split}
    ||R_{\varepsilon_p}^{2,0}||_{L^{q}(\mathbb{R}^2)} \leq \frac{C(q)}{\varepsilon_p^2} \bigg(&||N_p||^2_{L^{2q}} +\varepsilon_p^2 ||N_p||^3_{L^{3q}} \\ &+\varepsilon_p^2 ||\partial_1N_p||^2_{L^{2q}}\\&+ \varepsilon_p^4 ||\partial_2N_p||^2_{L^{2q}}+ \bigg|\bigg|\frac{h_1(x)h_2(x)}{\varepsilon_p^2}\bigg|\bigg|_{L^{q}(\mathbb{R}^2)} \bigg),
\end{split}
\end{equation}
\begin{equation}
\begin{split}
       ||R_{\varepsilon_p}^{0,2}||_{L^{q}(\mathbb{R}^2)} \leq C(q)\bigg( &||N_p||^2_{L^{2q}(\mathbb{R}^2)} +\varepsilon_p^2 ||N_p||^3_{L^{3q}(\mathbb{R}^2)}\\&+ \varepsilon_p^2 ||\partial_1N_p||^2_{L^{2q}(\mathbb{R}^2)}\\&+ \varepsilon_p^4 ||\partial_2N_p||^2_{L^{2q}(\mathbb{R}^2)} +\bigg|\bigg|\frac{h_1(x)h_2(x)}{\varepsilon_p^2}\bigg|\bigg|_{L^{q}(\mathbb{R}^2)} \bigg).
      \end{split}
\end{equation}
Where $h_1,\, h_2$ are defined in proposition \ref{proposition 5.3}.
\end{lemme}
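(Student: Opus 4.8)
The plan is to obtain each of the four estimates by substituting the explicit expressions of $f_p$, $R^{1,1}_{\varepsilon_p}$, $R^{2,0}_{\varepsilon_p}$ and $R^{0,2}_{\varepsilon_p}$ from Proposition \ref{proposition 5.3} and then controlling every resulting product by Hölder's inequality, combined with three ingredients. First, the elliptic bound \eqref{controle theta} of Proposition \ref{lemme 4.6}, which lets us replace each occurrence of $\partial_1\theta_p$ (respectively $\varepsilon_p\partial_2\theta_p$) by $N_p$ in any $L^q$ norm. Second, the product estimates \eqref{recurrence1} and \eqref{recurrence2} of Lemma \ref{wouaf wouaf}, which absorb the quadratic and cubic combinations of $N_p$ and the $\theta_p$-derivatives. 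Third, the fact that the smooth coefficients appearing in the remainders, namely $l_1$, $K'(\rho_p)/K(1)$ and the function $h_3$ from \eqref{alpha}, are bounded in $L^\infty$ uniformly in $p$; this follows from \eqref{2.3} since $\rho_p-1=-\gamma\varepsilon_p^2 N_p$ stays uniformly small. I will also use repeatedly that $\varepsilon_p^2=1-c_p^2=(1-c_p)(1+c_p)$, so that $|(c_p-1)/\varepsilon_p^2|=1/(1+c_p)\le 1$ is bounded.

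The estimates on $f_p$ and $R^{1,1}_{\varepsilon_p}$ are immediate, since both are sums of terms of the type $N_p^2$, $N_p\partial_1\theta_p$, $(\partial_1\theta_p)^2$ and $\varepsilon_p N_p\partial_2\theta_p$, each bounded by $C(q)\|N_p\|_{L^{2q}}^2$ through \eqref{recurrence2}. For $R^{0,2}_{\varepsilon_p}$, given by \eqref{4.17}, I treat the summands one at a time: $\tfrac{g''(1)}{2}N_p^2$, $\tfrac12(\partial_1\theta_p)^2$ and the term $\varepsilon_p^2(\partial_2\theta_p)^2$ are all $\le C\|N_p\|_{L^{2q}}^2$ by \eqref{recurrence2}; the cubic term $\varepsilon_p^2 N_p^3 l_1$ gives $\varepsilon_p^2\|N_p\|_{L^{3q}}^3$; the capillary terms $\varepsilon_p^2(\partial_1 N_p)^2$ and $\varepsilon_p^4(\partial_2 N_p)^2$ give $\varepsilon_p^2\|\partial_1 N_p\|_{L^{2q}}^2$ and $\varepsilon_p^4\|\partial_2 N_p\|_{L^{2q}}^2$ using $K'(\rho_p)/K(1)\in L^\infty$; and the last summand is kept as $\|h_1h_2/\varepsilon_p^2\|_{L^q}$. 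The estimate for $R^{2,0}_{\varepsilon_p}$, from \eqref{4.16}, is identical in spirit, the only new feature being the term $\tfrac12(\partial_2\theta_p)^2$: since \eqref{controle theta} controls only $\varepsilon_p\partial_2\theta_p$ by $N_p$, we obtain $\|(\partial_2\theta_p)^2\|_{L^q}\le C\varepsilon_p^{-2}\|N_p\|_{L^{2q}}^2$, and this is precisely what forces the global prefactor $\varepsilon_p^{-2}$ in the stated bound.

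The genuinely delicate term, and the one where the bookkeeping is most involved, is $h_1h_2/\varepsilon_p^2$. First, from \eqref{alpha} one has $h_1=K(1)\,h_3(-\gamma\varepsilon_p^2 N_p)\,\varepsilon_p^2 N_p$, so the uniform $L^\infty$ bound on $h_3$ yields the pointwise estimate $|h_1/\varepsilon_p^2|\le C|N_p|$, whence $\|h_1h_2/\varepsilon_p^2\|_{L^q}\le C\|N_p\,h_2\|_{L^q}$. It then remains to expand $h_2$ according to its definition \eqref{beta} and to bound $\|N_p h_2\|_{L^q}$ summand by summand. The leading contribution $N_p\,(N_p-c_p\partial_1\theta_p)$ is handled by Hölder and \eqref{controle theta}, giving $\|N_p\|_{L^{2q}}(\|N_p\|_{L^{2q}}+c_p\|\partial_1\theta_p\|_{L^{2q}})\le C\|N_p\|_{L^{2q}}^2$; note that no smallness of $N_p-\partial_1\theta_p$ is needed here, only separate $L^{2q}$ control of the two factors. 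The remaining summands of $h_2$ each carry a factor $\varepsilon_p^2$; multiplied by $N_p$ they are estimated with \eqref{recurrence1} for the cubic pieces, producing $\varepsilon_p^2\|N_p\|_{L^{3q}}^3$, and by placing $N_p$ and $K'(\rho_p)/K(1)$ in $L^\infty$ for the two capillary pieces, producing $\varepsilon_p^4\|N_p\|_{L^\infty}\|\partial_1 N_p\|_{L^{2q}}^2$ and $\varepsilon_p^6\|N_p\|_{L^\infty}\|\partial_2 N_p\|_{L^{2q}}^2$, which are exactly the remaining terms of the claimed inequality.

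The only real obstacle is thus the $h_2$ term: one must exploit its definition \eqref{beta} rather than attempt to estimate $N_p-c_p\partial_1\theta_p$ directly, and one must track the powers of $\varepsilon_p$ carefully so that the derivative-squared contributions come weighted by enough positive powers of $\varepsilon_p$ to remain harmless in the induction of Proposition \ref{proposition 6.1}. Everything else reduces to Hölder's inequality together with the already established bounds \eqref{controle theta} and Lemma \ref{wouaf wouaf}.
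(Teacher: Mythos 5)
Your proposal is correct and follows essentially the same route as the paper: the paper's (very terse) proof is precisely the pointwise bound $|h_1/\varepsilon_p^2|\le C|N_p|$ from \eqref{alpha}, the expansion of $N_p h_2$ into quadratic, cubic, quartic and capillary pieces, and term-by-term control via H\"older, Lemma \ref{wouaf wouaf} and the uniform smallness of $\rho_p-1$ from \eqref{2.3}, which is exactly what you do. The only points you leave implicit are that the quartic piece $\varepsilon_p^4 N_p^4 l$ is absorbed into $\varepsilon_p^2\|N_p\|^3_{L^{3q}}$ using $\varepsilon_p^2\|N_p\|_{L^\infty}\le C$ (an ingredient you have already stated), and that in $R^{2,0}_{\varepsilon_p}$ the terms $N_p^3l_1$, $(\partial_1N_p)^2$ and $h_1h_2/\varepsilon_p^4$ also lack the $\varepsilon_p^2$ prefactor, not just $(\partial_2\theta_p)^2$; neither affects the validity of the argument.
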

\begin{proof}
The proof is a direct computation combined with lemma \ref{wouaf wouaf}, estimate \eqref{2.3} and observing that
\begin{align*}
\bigg|\bigg|\frac{(h_1h_2)}{\varepsilon_p^2}\bigg|\bigg|_{L^q(\Ra^2)}
\leq C(q) \bigg(
&\|N^2_p\|_{L^q(\Ra^2)}
+\|N_p\p_1 \theta_p\|_{L^q(\Ra^2)}\\
&+\varepsilon_p^2\|N_p^3\|_{L^q(\Ra^2)}
+\varepsilon_p^2\|N_p(\p_1\theta_p)^2\|_{L^q(\Ra^2)}
\\&+
\varepsilon_p^4\|N_p^4\|_{L^q(\Ra^2)}
+\varepsilon_p^4\|(\p_2 \theta_p)^2N_p \|_{L^q(\Ra^2)}
\\&+\varepsilon_p^4\|(\p_1 N_p)^2N_p  \|_{L^q(\Ra^2)}\\&+\varepsilon_p^6\|(\p_2 N_p)^2N_p  \|_{L^q(\Ra^2)}\bigg).
\end{align*}
\end{proof}
\begin{lemme}
\label{6.2}
For any $1< q <\infty$, there exists a constant $C(q)$, such that
\begin{equation}
\begin{split}
    ||N_p ||_{L^{q}(\mathbb{R}^2)}&+||\partial_1N_p ||_{L^{q}(\mathbb{R}^2)}+||\partial_2N_p ||_{L^{q}(\mathbb{R}^2)}\\+||\partial_1^2N_p ||_{L^{q}(\mathbb{R}^2)}&+\varepsilon_p||\partial_1 \partial_2N_p ||_{L^{q}(\mathbb{R}^2)} + \varepsilon_p^2 ||\partial_2^2N_p ||_{L^{q}(\mathbb{R}^2)}\\ \leq C(q)
    \bigg(&||N_p ||^2_{L^{2q}(\mathbb{R}^2)}+\varepsilon_p^2||N_p ||^3_{L^{3q}(\mathbb{R}^2)}+ \varepsilon_p^2||\partial_1N_p ||^2_{L^{2q}(\mathbb{R}^2)}\\&+ \varepsilon_p^4||\partial_2N_p ||^2_{L^{2q}(\mathbb{R}^2)}+ \varepsilon_p^4||\partial_1N_p ||^2_{L^{2q}(\mathbb{R}^2)}||N_p ||_{L^{\infty}(\mathbb{R}^2)}\\&+\varepsilon_p^6||\partial_2N_p ||^2_{L^{2q}(\mathbb{R}^2)}||N_p ||_{L^{\infty}(\mathbb{R}^2)} \bigg).
    \end{split}
\end{equation}
\end{lemme}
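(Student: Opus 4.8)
The plan is to read off each term on the left-hand side directly from the convolution representation \eqref{equationrecurrence},
\[
N_p = K_{\varepsilon_p}^{2,0} \ast f_p + \sum_{i+j=2} \varepsilon_p^2\, K_{\varepsilon_p}^{i,j} \ast R_{\varepsilon_p}^{i,j},
\]
and then to invoke the kernel bounds of Proposition \ref{inégalitéconvolution} together with the integral estimates on $f_p$ and on the remainders $R_{\varepsilon_p}^{i,j}$ collected in Lemma \ref{lemme6.1}.

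First I would push each derivative through the kernels in Fourier space. Since $\widehat{\p^{\alpha}(K_{\varepsilon_p}^{i,j} \ast u)}(\xi) = i^{|\alpha|}\frac{\xi_1^{i+\alpha_1}\xi_2^{j+\alpha_2}}{Q(\xi)}\widehat{u}(\xi)$, applying $\p^{\alpha}$ to the representation above merely raises each kernel index from $(i,j)$ to $(i+\alpha_1, j+\alpha_2)$, leaving the sources $f_p$ and $R_{\varepsilon_p}^{i,j}$ undifferentiated. For each of the six multi-indices $\alpha \in \{(0,0),(1,0),(0,1),(2,0),(1,1),(0,2)\}$ that occur on the left, the raised indices satisfy $2 \leq (i+\alpha_1)+(j+\alpha_2) \leq 4$, so Proposition \ref{inégalitéconvolution} applies and produces in $L^q$ a singular factor $\varepsilon_p^{-\kappa_{i+\alpha_1,\,j+\alpha_2}}$.

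The heart of the matter is then a careful bookkeeping of powers of $\varepsilon_p$. For each left-hand term one multiplies three factors: its own prescribed weight ($1$, $\varepsilon_p$ or $\varepsilon_p^2$), the weight $\varepsilon_p^2$ carried by the corresponding remainder in \eqref{equationrecurrence} (the $f_p$ term carrying none), and the factor $\varepsilon_p^{-\kappa}$ from the kernel estimate. A direct case check over all twenty-four pairings shows that every resulting exponent of $\varepsilon_p$ is nonnegative, the minimal ones being exponent $0$ for every $f_p$ contribution, at least $2$ for $R_{\varepsilon_p}^{2,0}$, at least $1$ for $R_{\varepsilon_p}^{1,1}$, and at least $0$ for $R_{\varepsilon_p}^{0,2}$. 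Consequently the full left-hand side is controlled, for $p$ small enough, by
\[
C(q)\big( \|f_p\|_{L^q(\Ra^2)} + \varepsilon_p^2\|R_{\varepsilon_p}^{2,0}\|_{L^q(\Ra^2)} + \varepsilon_p\|R_{\varepsilon_p}^{1,1}\|_{L^q(\Ra^2)} + \|R_{\varepsilon_p}^{0,2}\|_{L^q(\Ra^2)} \big).
\]

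It then suffices to substitute the four estimates of Lemma \ref{lemme6.1}, using in particular $\|f_p\|_{L^q}+\varepsilon_p\|R_{\varepsilon_p}^{1,1}\|_{L^q} \leq C(q)\|N_p\|_{L^{2q}}^2$, the bounds on $\varepsilon_p^2\|R_{\varepsilon_p}^{2,0}\|_{L^q}$ and $\|R_{\varepsilon_p}^{0,2}\|_{L^q}$, and the auxiliary estimate on $\|h_1 h_2/\varepsilon_p^2\|_{L^q}$, to recover exactly the right-hand side of the claimed inequality. I expect the genuine difficulty to lie entirely in the power-counting of the previous step rather than in any single estimate: the anisotropic rescaling forces the singular kernel factors $\varepsilon_p^{-\kappa_{i,j}}$, and one must verify that the $\varepsilon_p$-weights imposed on the higher $x_2$-derivatives on the left, together with the $\varepsilon_p^2$ prefactors in the equation, precisely absorb these singularities so that no negative power of $\varepsilon_p$ survives as $p \to 0$. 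This cancellation is the very reason for the particular weights chosen in \eqref{le truc à démontrer par recurrence}.
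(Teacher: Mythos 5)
Your proposal is correct and follows essentially the same route as the paper: differentiating the convolution identity \eqref{equationrecurrence} so that derivatives fall on the kernels via $\partial^{\alpha}K_{\varepsilon_p}^{i,j}=\imath^{|\alpha|}K_{\varepsilon_p}^{i+\alpha_1,j+\alpha_2}$, invoking Proposition \ref{inégalitéconvolution} and checking that the weights on the left-hand side absorb every singular factor $\varepsilon_p^{-\kappa_{i,j}}$, and then concluding with Lemma \ref{lemme6.1}. Your power count (net factors $\varepsilon_p^0$ for $f_p$, $\varepsilon_p^{2}$ for $R_{\varepsilon_p}^{2,0}$, $\varepsilon_p$ for $R_{\varepsilon_p}^{1,1}$, $\varepsilon_p^{0}$ for $R_{\varepsilon_p}^{0,2}$) matches the estimates the paper writes out.
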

\begin{proof}
First we observe that
\begin{equation}
    \partial^\alpha K_{\varepsilon_p}^{j,k}=\imath^{\alpha_1+ \alpha_2} K_{\varepsilon_p}^{j+\alpha_1,k+\alpha_2},
\end{equation}
for any $|\alpha| \leq 2.$ Applying the operator $\p^{\alpha}$ to \eqref{equationrecurrence} we obtain 
\begin{equation}
    ||\partial^{\alpha}N_p ||_{L^{q}(\mathbb{R}^2)} \leq || \partial^{\alpha} K_{\varepsilon_p}^{2,0 } \ast f_p ||_{L^{q}(\mathbb{R}^2)}+ \varepsilon_p^2 \sum_{i+j=2} ||\partial^\alpha K_{\varepsilon_p}^{i,j} \ast R_{\varepsilon_p}^{i,j} ||_{L^{q}(\mathbb{R}^2)},
\end{equation}
then, with proposition \ref{inégalitéconvolution}, we deduce 
\begin{align*}
    ||N_p||_{L^{q}(\mathbb{R}^2)} &\leq ||K_{\varepsilon_p}^{2,0} \ast f_p||_{L^{q}(\mathbb{R}^2)}+ \sum_{i+j=2}\varepsilon_p^2|| K_{\varepsilon_p}^{i,j} \ast R_{\varepsilon_p}^{i,j} ||_{L^{q}(\mathbb{R}^2)} \\
    &\leq C(q) \bigg( ||f_p||_{L^{q}(\mathbb{R}^2)} + \varepsilon_p^2 \sum_{i+j=2} ||R_{\varepsilon_p}^{i,j}||_{L^{q}(\mathbb{R}^2)} \bigg),
\end{align*}
\begin{align*}
    ||\partial_1 N_p||_{L^{q}(\mathbb{R}^2)} &\leq || \partial_1 K_{\varepsilon_p}^{2,0} \ast f_p||_{L^{q}(\mathbb{R}^2)}+ \sum_{i+j=2}\varepsilon_p^2|| \partial_1K_{\varepsilon_p}^{i,j} \ast R_{\varepsilon_p}^{i,j} ||_{L^{q}(\mathbb{R}^2)} \\
    &\leq C(q) \bigg( ||f_p||_{L^{q}(\mathbb{R}^2)} + \varepsilon_p^2 \big( ||R_{\varepsilon_p}^{2,0} ||_{L^{q}(\mathbb{R}^2)}+||R_{\varepsilon_p}^{1,1} ||_{L^{q}(\mathbb{R}^2)}\big) \\&\qquad \qquad \qquad \qquad \qquad \qquad \qquad  \quad + \varepsilon_p||R_{\varepsilon_p}^{0,2} ||_{L^{q}(\mathbb{R}^2)}  \bigg).
\end{align*}
Similar estimates hold for $\partial_2N_p,\, \partial_1^2N_p,\, \partial_1 \partial_2N_p\ \text{and} \,\partial_2^2N_p $.\,
Finally using  lemma
\ref{lemme6.1}, we obtain lemma \ref{6.2}.
\end{proof}
\begin{lemme}
\label{6.3}
Let $2 \leq q \leq \infty$, there exists a positive constant $C(q)$, depending possibly on $q$, but not on p, such that 
\begin{equation}
\label{estimationdérivé}
   ||\partial_1 N_p||_{L^{q}(\mathbb{R}^2)}+\varepsilon_p ||\partial_2 N_p||_{L^{q}(\mathbb{R}^2)} \leq C(q) \varepsilon_p^{\frac{6}{q}-3},
\end{equation}
and
 \begin{equation}
\label{6.12}
    ||N_p||_{L^{q}(\mathbb{R}^2)} \leq C(q), \, \forall 2 \leq q \leq \frac{8}{3}.
\end{equation}
Moreover, for $\frac{8}{3}<q<8 $, there exists $C(q)$ such that the following
bound holds.
\begin{equation}
\label{6.13}
    \varepsilon_p^{\frac{2}{3}} ||N_p||_{L^{q}(\mathbb{R}^2)} \leq C(q).
\end{equation}
\end{lemme}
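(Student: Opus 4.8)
The plan is to regard the three estimates as the low-integrability seeds of the induction in Proposition \ref{proposition 6.1}, and to prove them in a different order than stated: first the $L^q$ bound \eqref{6.12}, then the derivative bound \eqref{estimationdérivé}, and finally \eqref{6.13} as a consequence of the first two. The two extreme exponents $q=2$ and $q=\infty$ are already in hand. At $q=2$ the quantities in \eqref{estimationdérivé} are controlled by the Pohozaev machinery of subsection \ref{pifpafpouf}: the identity \eqref{coool} gives $\|\partial_1 N_p\|_{L^2(\mathbb{R}^2)}\leq C$, while the rescaling identities of that subsection together with \eqref{cool} (and $\varepsilon_p\asymp p$) give $\|\partial_2 N_p\|_{L^2(\mathbb{R}^2)}\leq C\varepsilon_p^{-1}$. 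At $q=\infty$ the uniform regularity bounds \eqref{controleee} of Lemma \ref{régularité}, transported through the rescaling \eqref{chgmt variable2}, yield $\|\partial_1 N_p\|_{L^\infty(\mathbb{R}^2)}\leq C\varepsilon_p^{-3}$ and $\|\partial_2 N_p\|_{L^\infty(\mathbb{R}^2)}\leq C\varepsilon_p^{-4}$.

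To prove \eqref{6.12} I would work with the convolution identity \eqref{equationrecurrence} and estimate $N_p$ directly in $H^s(\mathbb{R}^2)$ for $0\leq s\leq\frac14$. Applying the convolution inequality \eqref{jenaimarrre} term by term gives $\|N_p\|_{H^s}\leq \|K_{\varepsilon_p}^{2,0}\|_{H^s}\|f_p\|_{L^1}+\varepsilon_p^2\sum_{i+j=2}\|K_{\varepsilon_p}^{i,j}\|_{H^s}\|R_{\varepsilon_p}^{i,j}\|_{L^1}$, into which I insert the $L^1$ bounds of Proposition \ref{controle reste} and the weighted kernel estimate \eqref{kernels5.1}. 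The only dangerous contribution is $\varepsilon_p^2\|K_{\varepsilon_p}^{2,0}\|_{H^s}\|R_{\varepsilon_p}^{2,0}\|_{L^1}$, in which $\|R_{\varepsilon_p}^{2,0}\|_{L^1}$ blows up like $\varepsilon_p^{-2}$; this is compensated exactly by the $\varepsilon_p^2$ prefactor and by the fact that it is paired with the best-behaved kernel $K_{\varepsilon_p}^{2,0}$, so the whole sum stays $O(1)$ and $\|N_p\|_{H^{1/4}}\leq C$. The Sobolev embedding \eqref{rappelsuite}, namely $H^{1/4}(\mathbb{R}^2)\hookrightarrow L^q(\mathbb{R}^2)$ for $2\leq q\leq\frac83$, then yields \eqref{6.12}.

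For \eqref{estimationdérivé} I would simply interpolate the two endpoints recalled above. Hölder's inequality $\|g\|_{L^q}\leq\|g\|_{L^2}^{2/q}\|g\|_{L^\infty}^{1-2/q}$ applied to $g=\partial_1 N_p$ gives $\|\partial_1 N_p\|_{L^q}\leq C(C\varepsilon_p^{-3})^{1-2/q}=C\varepsilon_p^{6/q-3}$, and applied to $g=\partial_2 N_p$ gives $\|\partial_2 N_p\|_{L^q}\leq (C\varepsilon_p^{-1})^{2/q}(C\varepsilon_p^{-4})^{1-2/q}=C\varepsilon_p^{6/q-4}$, whence $\varepsilon_p\|\partial_2 N_p\|_{L^q}\leq C\varepsilon_p^{6/q-3}$. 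Summing gives \eqref{estimationdérivé} for every $2\leq q\leq\infty$.

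Finally, \eqref{6.13} I would deduce from \eqref{6.12} and the $q=2$ case of \eqref{estimationdérivé} by a Gagliardo--Nirenberg interpolation rather than from a fractional $\dot H^s$ bound. The point to note is that the kernel estimates \eqref{kernerlss} only give $\|N_p\|_{\dot H^s}\leq C\varepsilon_p^{1/2-2s}$ for $s<1$, and the associated embedding into $L^q$ saturates at $q=\frac{24}{5}<8$; this is precisely why a direct Sobolev argument stops short of the claimed range, and why I would use the full gradient instead. With $\|\nabla N_p\|_{L^2}\leq C\varepsilon_p^{-1}$ (the $q=2$ endpoint), the inequality $\|N_p\|_{L^q}\leq C\|N_p\|_{L^{8/3}}^{1-\theta}\|\nabla N_p\|_{L^2}^{\theta}$ with $\theta=1-\frac{8}{3q}$ gives $\varepsilon_p^{2/3}\|N_p\|_{L^q}\leq C\varepsilon_p^{\frac{8}{3q}-\frac13}$, which stays bounded exactly for $q\leq 8$. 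The main obstacle is therefore the borderline term in \eqref{6.12}: the argument hinges on the cancellation of the $\varepsilon_p^{-2}$ growth of $\|R_{\varepsilon_p}^{2,0}\|_{L^1}$ against the scaling weights of Proposition \ref{kernel controlle}, and on tuning the interpolation in \eqref{6.13} so as to reach $q=8$ rather than the smaller Sobolev threshold $\frac{24}{5}$.
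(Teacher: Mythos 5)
Your proposal is correct, and for two of the three estimates it coincides with the paper's own proof: \eqref{estimationdérivé} is obtained there exactly as you do, by interpolating the $L^2$ bounds coming from the Pohozaev identities (\eqref{cool}, \eqref{coool}) against the $L^\infty$ bounds $\|\partial_1 N_p\|_{L^\infty}\lesssim\varepsilon_p^{-3}$, $\|\partial_2 N_p\|_{L^\infty}\lesssim\varepsilon_p^{-4}$ inherited from \eqref{controleee} through the rescaling; and \eqref{6.12} is proved there by the same $H^s$ estimate ($0\le s\le\frac14$) applied to the convolution identity \eqref{equationrecurrence}, using \eqref{jenaimarrre}, Proposition \ref{controle reste} and \eqref{kernels5.1}, followed by the embedding \eqref{rappelsuite}, with the same cancellation of the $\varepsilon_p^{-2}$ growth of $\|R^{2,0}_{\varepsilon_p}\|_{L^1}$ against its $\varepsilon_p^2$ prefactor. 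Where you genuinely diverge is \eqref{6.13}. The paper first upgrades \eqref{6.12} and \eqref{estimationdérivé} to $\|N_p\|_{W^{1,q}}\lesssim 1+\varepsilon_p^{-1-\nu}$ for $q>2$ close to $2$, passes to $\|N_p\|_{L^\infty}\lesssim 1+\varepsilon_p^{-1-\nu}$ via the embedding \eqref{holder espace injection}, and then interpolates between $L^{8/3}$ and $L^\infty$; since that route needs $(1+\nu)\left(1-\frac{8}{3q}\right)\le\frac23$ with $\nu>0$, it covers exactly the stated range $\frac83<q<8$ and degenerates at $q=8$. You instead invoke the Gagliardo--Nirenberg inequality $\|N_p\|_{L^q}\le C\|N_p\|_{L^{8/3}}^{1-\theta}\|\nabla N_p\|_{L^2}^{\theta}$ with $\theta=1-\frac{8}{3q}$ and the endpoint bound $\|\nabla N_p\|_{L^2}\lesssim\varepsilon_p^{-1}$; the scaling check is right (in dimension $2$ with gradient exponent $p=d=2$ this is the borderline case of Gagliardo--Nirenberg, valid precisely because $\theta<1$), and it yields $\varepsilon_p^{2/3}\|N_p\|_{L^q}\lesssim\varepsilon_p^{\frac{8}{3q}-\frac13}$, bounded for all $q\le 8$. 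This is both shorter (one inequality replaces the chain $W^{1,q}\to L^\infty\to$ interpolation) and marginally stronger, since it reaches the endpoint $q=8$, which the paper's $\nu$-loss argument cannot. Your side remark that a purely fractional route through \eqref{kernerlss} would saturate at $q=\frac{24}{5}$ is accurate, though it addresses a strategy the paper never attempts rather than a defect of its actual proof.
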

\begin{proof}
Using \eqref{controleee}, we have  
\begin{equation}
    ||\partial_1 N_p||_{L^{\infty}(\mathbb{R}^2)}\leq \frac{C}{\varepsilon_p^3}, \qquad {\rm and} \qquad ||\partial_2 N_p||_{L^{\infty}(\mathbb{R}^2)}\leq \frac{C}{\varepsilon_p^4}.
\end{equation}
 Combining with \eqref{cool} and \eqref{coool} we obtain \eqref{estimationdérivé} using standard interpolations inequalities. Applying  \eqref{jenaimarrre} on \eqref{equationrecurrence}  for any $0\leq s\leq \frac{1}{4}$  we obtain
\begin{equation}
\begin{split}
||N_p||_{H^{s}(\mathbb{R}^2)} \leq& ||K_{\varepsilon_p}^{2,0}||_{H^{s}(\mathbb{R}^2)} \big(||f_p||_{L^{1}(\mathbb{R}^2)}+\varepsilon_p^2||R_{\varepsilon_p}^{2,0}||_{L^{1}(\mathbb{R}^2)}\big)
\\&+\varepsilon_p^2||R_{\varepsilon_p}^{1,1}||_{L^{1}(\mathbb{R}^2)}||K_{\varepsilon_p}^{1,1}||_{H^{s}(\mathbb{R}^2)}\\&+\varepsilon_p^2||R_{\varepsilon_p}^{0,2}||_{L^{1}(\mathbb{R}^2)}||K_{\varepsilon_p}^{0,2}||_{H^{s}(\mathbb{R}^2)}.
\end{split}
\end{equation}
By proposition \ref{controle reste}, lemma \ref{lemme6.1} and claim \eqref{kernels5.1}, we deduce
\begin{equation}
    ||N_p||_{H^{s}(\mathbb{R}^2)} \leq C(s).
\end{equation}
Thus, by Sobolev embedding \eqref{rappelsuite}  we have
\begin{equation}
    ||N_p||_{L^{q}(\mathbb{R}^2)} \leq C(q), \, \forall 2 \leq q \leq \frac{8}{3}.
\end{equation}
Let
 $\nu>0$ combining \eqref{6.12} and \eqref{estimationdérivé}  there exists a constant $C(\nu)$ and $q>2$ such that 
$$
||N_p||_{W^{1,q}(\mathbb{R}^2)} \leq C(\nu) \bigg( 1 + \varepsilon_p^{-1-\nu} \bigg).
$$
 Thus, by Sobolev embedding \eqref{holder espace injection}, we obtain 
$$
||N_p||_{L^{\infty}(\mathbb{R}^2)} \leq C(\nu) \bigg( 1 + \varepsilon_p^{-1-\nu} \bigg).
$$
Combining with \eqref{6.12}  we have \eqref{6.13} by interpolation between Lebesgue spaces.
\end{proof}
We are now able to prove the first step of our induction.
\paragraph{The first step}
Let $1<q<\infty$, there exists a constant $C(q)$, such that
\begin{equation}
\begin{split}
\label{j'ai obtenu ça}
        ||N_p ||_{L^{q}(\mathbb{R}^2)}+||\partial_1N_p ||_{L^{q}(\mathbb{R}^2)}+||\partial_2N_p ||_{L^{q}(\mathbb{R}^2)}+\\||\partial_1^2N_p ||_{L^{q}(\mathbb{R}^2)}+\varepsilon_p||\partial_1 \partial_2N_p ||_{L^{q}(\mathbb{R}^2)} + \varepsilon_p^2 ||\partial_2^2N_p ||_{L^{q}(\mathbb{R}^2)} \leq C(q).
        \end{split}
\end{equation}
\begin{proof}
First of all for any $1 <q\leq \frac{4}{3}$ we have, using lemma \ref{6.2}, inequalities  \eqref{6.13}, \eqref{estimationdérivé}  and  \eqref{6.12},
\begin{equation}
\begin{split}
   ||N_p ||_{L^{q}(\mathbb{R}^2)}+||\partial_1N_p ||_{L^{q}(\mathbb{R}^2)}+||\partial_2N_p ||_{L^{q}(\mathbb{R}^2)}+&\\||\partial_1^2N_p ||_{L^{q}(\mathbb{R}^2)}+\varepsilon_p||\partial_1 \partial_2N_p ||_{L^{q}(\mathbb{R}^2)} + \varepsilon_p^2 ||\partial_2^2N_p ||_{L^{q}(\mathbb{R}^2)}  &\leq C(q).
\end{split}
\end{equation}
Then, by Sobolev embedding \eqref{embedding sobolev} and standard interpolations
inequalities,
\begin{equation}
    ||N_p ||_{L^{q}(\mathbb{R}^2)}+\varepsilon_p||\partial_1N_p ||_{L^{q}(\mathbb{R}^2)}+\varepsilon_p^2||\partial_2N_p ||_{L^{q}(\mathbb{R}^2)} \leq C(q),
\end{equation}
for any $1 < q \leq 4$.
Then, for any $1<q \leq 2$ by lemma \ref{6.2}, we obtain
\begin{equation}
\begin{split}
 ||N_p ||_{L^{q}(\mathbb{R}^2)}+||\partial_1N_p ||_{L^{q}(\mathbb{R}^2)}+||\partial_2N_p ||_{L^{q}(\mathbb{R}^2)}+&\\||\partial_1^2N_p ||_{L^{q}(\mathbb{R}^2)}+\varepsilon_p||\partial_1 \partial_2N_p ||_{L^{q}(\mathbb{R}^2)} + \varepsilon_p^2 ||\partial_2^2N_p ||_{L^{q}(\mathbb{R}^2)}  &\leq C(q).   
 \end{split}
\end{equation}
We deduce, by Sobolev embedding \eqref{embedding sobolev}, that
\begin{equation}
     ||N_p ||_{L^{q}(\mathbb{R}^2)}+\varepsilon_p||\partial_1N_p ||_{L^{q}(\mathbb{R}^2)}+\varepsilon_p^2||\partial_2N_p ||_{L^{q}(\mathbb{R}^2)} \leq C(q) ,
     \end{equation}
$\forall$ $1<q<\infty. $
Finally using lemma $\ref{6.2}$ we obtain claim \eqref{j'ai obtenu ça}.
\end{proof}
\paragraph{Inductive step }
We fix $k \in \mathbb{N}$.
In this part we assume that \eqref{le truc à démontrer par recurrence} is true for any  $1<q<\infty$ and any $\alpha \in \mathbb{N}^2$ such that $|\alpha| \leq k. $  We will prove that \eqref{le truc à démontrer par recurrence} holds for any $|\alpha|\leq k+1$. 
\begin{lemme}
There exists $p_0>0$ such that for any $0\leq p \leq p_0$,  $1<q<\infty$ and $\alpha \in \mathbb{N}^2$ such that $|\alpha| \leq k+1$, then there exists a constant
 $C(q, \alpha)$  such that
\begin{equation}
\label{6.18}
    ||\partial^{\alpha}\partial_1 \theta_p||_{L^{q}(\mathbb{R}^2)}+\varepsilon_p||\partial^{\alpha}\partial_2 \theta_p||_{L^{q}(\mathbb{R}^2)} \leq C(q, \alpha).
\end{equation}
\end{lemme}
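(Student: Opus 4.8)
The plan is to establish \eqref{6.18} by a finite induction on $|\alpha|$, running through $0 \le |\alpha| \le k+1$, and feeding the recursive elliptic estimate \eqref{4.26} of Proposition \ref{lemme 4.6} with the bounds on $N_p$ furnished by the outer induction hypothesis \eqref{le truc à démontrer par recurrence}, which is assumed to hold for all $|\beta| \le k$ and all $1<q<\infty$. The quantity to be controlled is $\Gamma_p(q,\alpha)=\|\partial^{\alpha}\partial_1\theta_p\|_{L^q(\mathbb{R}^2)}+\varepsilon_p\|\partial^{\alpha}\partial_2\theta_p\|_{L^q(\mathbb{R}^2)}$, exactly the left-hand side of \eqref{6.18}.

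First I would record the two inputs on $N_p$ that the recursion needs. On the one hand, the term $\|\partial^{\alpha}N_p\|_{L^q}$ of order $|\alpha|\le k+1$ is uniformly bounded: writing $\alpha=\beta+e_i$ with $|\beta|=k$, the quantities $\|\partial_1\partial^{\beta}N_p\|_{L^q}$ and $\|\partial_2\partial^{\beta}N_p\|_{L^q}$ occurring in \eqref{le truc à démontrer par recurrence} already control it. On the other hand, for every $|\beta|\le k$ the induction hypothesis bounds $\partial^{\beta}N_p$ in $W^{1,q}$ uniformly in $p$ for all $q$, so the Morrey embedding \eqref{theorème de morrey} yields a uniform bound $\|\partial^{\beta}N_p\|_{L^\infty}\le C$; in particular $\|N_p\|_{L^\infty}\le C$ (this last fact already follows from the first step of the induction).

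Then I would run the inner induction. For $\alpha=0$, estimate \eqref{controle theta} gives $\Gamma_p(q,0)\le C(q)\|N_p\|_{L^q}\le C(q)$. Assume $\Gamma_p(q,\alpha')\le C(q,\alpha')$ for all $|\alpha'|<|\alpha|$, with $|\alpha|\le k+1$. In \eqref{4.26} I would isolate the $\beta=0$ contribution of the sum, namely $\varepsilon_p^2\|N_p\|_{L^\infty}\Gamma_p(q,\alpha)$, which reproduces $\Gamma_p(q,\alpha)$ on the right-hand side. Using $\|N_p\|_{L^\infty}\le C$ and choosing $p_0$ small enough that $C(q,\alpha)\varepsilon_p^2\|N_p\|_{L^\infty}\le \tfrac12$, this term is absorbed into the left-hand side. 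Every remaining term is then under control: $\|\partial^{\alpha}N_p\|_{L^q}\le C$ by the previous paragraph, and for each $0<\beta<\alpha$ one has $\|\partial^{\beta}N_p\|_{L^\infty}\le C$ (since $|\beta|\le k$) together with $\Gamma_p(q,\alpha-\beta)\le C$ by the inner induction hypothesis, because $|\alpha-\beta|<|\alpha|$. This yields $\Gamma_p(q,\alpha)\le C(q,\alpha)$, which is \eqref{6.18}.

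The main obstacle is precisely this self-referential $\beta=0$ term in \eqref{4.26}: taken at face value the recursion does not close, since $\Gamma_p(q,\alpha)$ reappears on the right. The resolution hinges on the uniform $L^\infty$ bound for $N_p$ (available from the first step of the outer induction via Sobolev embedding) together with the smallness $\varepsilon_p\to 0$, which together permit the absorption. One must also check that the threshold $p_0$ can be taken uniformly: since only the finitely many multi-indices $|\alpha|\le k+1$ occur, a finite minimum suffices, and the argument is carried out for $1<q<\infty$, the endpoint $q=\infty$ being recovered afterwards by Sobolev embedding exactly as in Theorem \ref{bornes sobolev}.
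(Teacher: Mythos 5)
Your proof is correct and follows essentially the same route as the paper: both feed the recursive estimate \eqref{4.26} with the uniform $L^\infty$ bounds on $\partial^{\beta}N_p$, $|\beta|\le k$, obtained from the outer induction hypothesis \eqref{le truc à démontrer par recurrence} via Sobolev embedding, bound $\|\partial^{\alpha}N_p\|_{L^q}$ for $|\alpha|\le k+1$ by that same hypothesis, and close the recursion by absorbing the self-referential term using the smallness of $\varepsilon_p^2$. The only difference is bookkeeping: the paper sums the inequalities over all $|\alpha|\le k+1$ (the quantity $S^q_k$) and absorbs $\varepsilon_p^2 S^q_k$ in one stroke, whereas you run an inner induction on $|\alpha|$ and absorb only the $\beta=0$ term at each step — the two devices are interchangeable.
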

\begin{proof}
Let $\alpha \in \mathbb{N}^2$, $|\alpha| \leq k+1.$
Using Sobolev embedding \eqref{injection ck sobolev} and our hypothesis, we have  
\begin{equation}
    ||N_p||_{C_0^k(\mathbb{R}^2)} \leq C(k),
\end{equation}
where $C(k)$ is a constant not depending on $p$. Then, by \eqref{4.26}, we obtain
\begin{equation}
\begin{split}
   & ||\partial^{\alpha}\partial_1 \theta_p||_{L^{q}(\mathbb{R}^2)}+\varepsilon_p||\partial^{\alpha}\partial_2 \theta_p||_{L^{q}(\mathbb{R}^2)} \leq C(q, \alpha) \bigg( ||\partial^{\alpha}N_p||_{L^{q}(\mathbb{R}^2)}\\&+ \varepsilon_p^2 \sum_{0\leq \beta <\alpha } \big( 
    ||\partial^{\alpha-\beta }\partial_1 \theta_p||_{L^{q}(\mathbb{R}^2)}+\varepsilon_p||\partial^{\alpha-\beta}\partial_2 \theta_p||_{L^{q}(\mathbb{R}^2)}\big) \bigg).
    \end{split}
\end{equation}
If we denote 
$$
S^q_k=\sum_{|\alpha|\leq k+1}\bigg(||\partial^{\alpha}\partial_1 \theta_p||_{L^{q}(\mathbb{R}^2)}+\varepsilon_p||\partial^{\alpha}\partial_2 \theta_p||_{L^{q}(\mathbb{R}^2)} \bigg),
$$
summing the previous inequalities, we obtain
$$
S_k^q \leq C(q, \alpha) \bigg( \varepsilon_p^2 S_k^q+ \sum_{|\alpha|\leq k+1}||\partial^{\alpha}N_p||_{L^{q}(\mathbb{R}^2)} \bigg).
$$
We conclude by using \eqref{le truc à démontrer par recurrence}.  
\end{proof}
\begin{lemme}\label{lemma 7.7}
There exists a positive constant $C(q, \alpha)$ depending possibly on q and $\alpha$, but not on p, such that 
\begin{equation}
\begin{split}
\label{6.19}
    & \qquad \qquad ||\partial^{\alpha} f_p ||_{L^{q}(\mathbb{R}^2)}+||\partial^{\alpha} R_{\varepsilon_p}^{0,2} ||_{L^{q}(\mathbb{R}^2)}\\&+\varepsilon_p||\partial^{\alpha} R_{\varepsilon_p}^{1,1} ||_{L^{q}(\mathbb{R}^2)}+\varepsilon_p^2||\partial^{\alpha} R_{\varepsilon_p}^{2,0} ||_{L^{q}(\mathbb{R}^2)}\leq C(q, \alpha),
\end{split}
\end{equation}
for any $1\leq q<\infty$ , $| \alpha|\leq k+1$.
\end{lemme}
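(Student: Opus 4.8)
The plan is to treat each of $f_p$, $R^{0,2}_{\varepsilon_p}$, $R^{1,1}_{\varepsilon_p}$ and $R^{2,0}_{\varepsilon_p}$, as given in Proposition \ref{proposition 5.3}, as a finite sum of products whose factors are $N_p$ and its derivatives, $\partial_1\theta_p$, $\partial_2\theta_p$ and their derivatives, powers of $\varepsilon_p$, and smooth bounded functions of $\rho_p-1=-\gamma\varepsilon_p^2 N_p$ such as $K'(\rho_p)$, $l_1$, $h_3$ and $(1+h_1)^{-1}$. To each product I would apply the Leibniz rule and then a generalized H\"older inequality, exactly as in the proofs of Lemma \ref{wouaf wouaf} and Lemma \ref{lemme6.1}, splitting the exponent $q$ among the factors. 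Every resulting factor is then estimated either by the induction hypothesis \eqref{le truc à démontrer par recurrence} at level $k$ --- which controls $\|\partial^\beta N_p\|_{L^q}$ for $|\beta|\le k+1$ together with the weighted order-$(k+2)$ quantities $\|\partial_1^2\partial^\beta N_p\|_{L^q}$, $\varepsilon_p\|\partial_1\partial_2\partial^\beta N_p\|_{L^q}$, $\varepsilon_p^2\|\partial_2^2\partial^\beta N_p\|_{L^q}$ for $|\beta|\le k$ --- or by the bound \eqref{6.18} just established, which controls $\|\partial^\beta\partial_1\theta_p\|_{L^q}+\varepsilon_p\|\partial^\beta\partial_2\theta_p\|_{L^q}$ for $|\beta|\le k+1$. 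The role of the weights $1$, $\varepsilon_p$, $\varepsilon_p^2$ carried respectively by $R^{0,2}$, $R^{1,1}$, $R^{2,0}$ in \eqref{6.19} is precisely to convert each ``bad'' factor (a $\partial_2\theta_p$, a $\partial_2 N_p$, or a high $\partial_2$-derivative) into one of these controlled weighted objects.

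For $f_p$ and the leading pieces of $R^{0,2}$ (the terms $N_p\partial_1\theta_p$, $N_p^2$, $(\partial_1\theta_p)^2$) all factors are differentiated to order at most $k+1$, so they are bounded directly by \eqref{le truc à démontrer par recurrence} and \eqref{6.18}. For $R^{1,1}=\gamma c_p N_p\partial_2\theta_p$, each Leibniz term $\varepsilon_p\,\partial^\beta N_p\,\partial^{\alpha-\beta}\partial_2\theta_p$ is read as $\partial^\beta N_p\cdot(\varepsilon_p\partial^{\alpha-\beta}\partial_2\theta_p)$, a product of a plainly controlled factor and one controlled by \eqref{6.18}. The subleading pieces carrying $\varepsilon_p^2(\partial_1 N_p)^2$, $\varepsilon_p^4(\partial_2 N_p)^2$ or $\varepsilon_p^2(\partial_2\theta_p)^2$ produce, in the worst case where all derivatives fall on one factor, a derivative of $N_p$ of order $k+2$; but that derivative is then exactly one of the weighted order-$(k+2)$ quantities $\partial_1^2\partial^\beta N_p$, $\varepsilon_p\partial_1\partial_2\partial^\beta N_p$, $\varepsilon_p^2\partial_2^2\partial^\beta N_p$ with $|\beta|\le k$ supplied by the induction hypothesis, and the surplus power of $\varepsilon_p$ leaves a uniformly bounded quantity. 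The smooth coefficients $K'(\rho_p)$, $h_3$, $(1+h_1)^{-1}$ contribute harmlessly, since by the chain rule (Fa\`a di Bruno formula) and \eqref{2.3} each of their derivatives carries an extra factor $\varepsilon_p^2$ and only lower-order derivatives of $N_p$.

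The genuinely delicate contributions are the $h_1h_2$ terms in $R^{2,0}$ and $R^{0,2}$, and this is where I expect the main obstacle. Substituting \eqref{camarche1} into the definition \eqref{beta} of $h_2$ and using \eqref{alpha} gives the identity $h_2=\varepsilon_p^2\bigl(\partial_1^2 N_p+\varepsilon_p^2\partial_2^2 N_p\bigr)/(1+h_1)$ with $h_1=K(1)\varepsilon_p^2 h_3 N_p$, so that $h_1h_2/\varepsilon_p^4=K(1)h_3 N_p\bigl(\partial_1^2 N_p+\varepsilon_p^2\partial_2^2 N_p\bigr)/(1+h_1)$ genuinely contains the second derivatives of $N_p$. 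Differentiating this $|\alpha|=k+1$ times and letting all derivatives fall on $\partial_1^2 N_p$ or $\partial_2^2 N_p$ produces the order-$(k+3)$ terms $\partial_1^2\partial^\alpha N_p$ and $\varepsilon_p^2\partial_2^2\partial^\alpha N_p$, which are precisely the top quantities on the left of \eqref{le truc à démontrer par recurrence} at the new level $k+1$ and hence are \emph{not} controlled by the induction hypothesis at level $k$. I would handle these by keeping them explicit: weighted by the $\varepsilon_p^2$ attached to $R^{2,0}$ in \eqref{6.19} (and by the $\varepsilon_p^2$ already present inside $h_1h_2/\varepsilon_p^2$ for $R^{0,2}$), they appear as $\varepsilon_p^2\|\partial_1^2\partial^\alpha N_p\|_{L^q}$ and $\varepsilon_p^2\bigl(\varepsilon_p^2\|\partial_2^2\partial^\alpha N_p\|_{L^q}\bigr)$. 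All such norms are finite for each fixed $p$ by the regularity remark following Proposition \ref{inégalitéconvolution}, so they may be carried along; when \eqref{6.19} is fed into the convolution identity \eqref{equationrecurrence} to close Proposition \ref{proposition 6.1} at level $k+1$, the prefactor $\varepsilon_p^2$ renders them absorbable into the left-hand side for $p$ small. It is this absorption of the top-order $h_1h_2$ terms, rather than any isolated estimate, that makes \eqref{6.19} hold with a constant independent of $p$; the remaining tracking of $\varepsilon_p$-weights is routine.
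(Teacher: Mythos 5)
Your overall scheme --- Leibniz rule, generalized H\"older inequality, the level-$k$ induction hypothesis \eqref{le truc à démontrer par recurrence} and the bound \eqref{6.18} --- is exactly the paper's, and your treatment of $f_p$, of $R^{1,1}_{\varepsilon_p}$, and of the pieces carrying $\varepsilon_p^2(\partial_1N_p)^2$, $\varepsilon_p^4(\partial_2N_p)^2$, $\varepsilon_p^2(\partial_2\theta_p)^2$ coincides with the paper's proof. The divergence is entirely in the $h_1h_2$ terms, and there you have manufactured an obstacle that is not present in the paper. The paper never substitutes \eqref{camarche1} into $h_2$: it keeps the defining expression \eqref{beta}, so that by \eqref{alpha} one has $h_1h_2/\varepsilon_p^2=K(1)h_3N_p\,h_2$, a finite sum of products of smooth bounded coefficients with $N_p^2$, $N_p\partial_1\theta_p$, and --- with prefactors $\varepsilon_p^2$, $\varepsilon_p^4$, $\varepsilon_p^6$ --- with $N_p^3$, $N_p(\partial_1\theta_p)^2$, $N_p^4$, $N_p(\partial_2\theta_p)^2$, $N_p(\partial_1N_p)^2$, $N_p(\partial_2N_p)^2$. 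No second derivative of $N_p$ ever appears. Differentiating $|\alpha|\le k+1$ times then produces at worst the order-$(k+2)$ derivatives $\partial^\alpha\partial_1N_p$ and $\partial^\alpha\partial_2N_p$, multiplied by $\varepsilon_p^4$ resp.\ $\varepsilon_p^6$, whereas the level-$k$ hypothesis already controls them with the much cheaper weights $\varepsilon_p$ resp.\ $\varepsilon_p^2$ (write $\partial^\alpha\partial_1N_p$ as $\partial_1^2\partial^\beta N_p$ or $\partial_1\partial_2\partial^\beta N_p$ with $|\beta|=k$, and similarly for $\partial^\alpha\partial_2N_p$). Hence \eqref{6.19} follows directly, as a self-contained estimate with a $p$-independent constant, from the level-$k$ hypothesis and \eqref{6.18} alone; no absorption is needed, and your closing claim that it is ``this absorption, rather than any isolated estimate, that makes \eqref{6.19} hold'' is false.

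That said, your alternative route is not wrong, only costlier. The identity $h_2(1+h_1)=\varepsilon_p^2\left(\partial_1^2N_p+\varepsilon_p^2\partial_2^2N_p\right)$ is correct, and the absorption you sketch does work: the uncontrolled terms $\varepsilon_p^2\|\partial_1^2\partial^\alpha N_p\|_{L^q}+\varepsilon_p^4\|\partial_2^2\partial^\alpha N_p\|_{L^q}$ are finite for each fixed $p$ (by the remark following proposition \ref{inégalitéconvolution}) and reappear on the right of \eqref{6.23} with a coefficient $C(q)\varepsilon_p^2\|N_p\|_{L^\infty}$, so they can be moved to the left for $p$ small. But note what this costs: (i) you are then not proving the lemma as stated --- a standalone bound with constant independent of $p$ --- but a weaker inequality with extra top-order terms, and \eqref{6.19} itself only becomes true a posteriori, once proposition \ref{proposition 6.1} is closed at level $k+1$; the induction must be restructured accordingly, so as a proof of \emph{this} lemma your argument is incomplete; (ii) the smallness threshold for the absorption depends on $q$ (and $\alpha$), which degrades the uniform $p_0$ asserted in proposition \ref{proposition 6.1}. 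Both defects disappear if you simply estimate $h_1h_2$ from its definition, as the paper does.
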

\begin{proof}
We will detail the computation for $h_1h_2$ defined in proposition \ref{proposition 5.3}.
 First of all we have using \eqref{alpha}, \eqref{beta}
\begin{align*}
\bigg|\bigg|\frac{\p^{\alpha}(h_1h_2)}{\varepsilon_p^2}\bigg|\bigg|_{L^q(\Ra^2)}
\leq C \bigg(
&\|\p^{\alpha}(\delta N^2_p)\|_{L^q(\Ra^2)}
+\|\p^{\alpha}(\delta N_p\p_1 \theta_p)\|_{L^q(\Ra^2)}\\
&+\varepsilon_p^2\|\p^{\alpha}(\delta N_p^3)\|_{L^q(\Ra^2)}
+\varepsilon_p^2\|\p^{\alpha}(N_p(\p_1\theta_p)^2\delta)\|_{L^q(\Ra^2)}
\\&+
\varepsilon_p^4\|\p^{\alpha}(N_p^4l\delta)\|_{L^q(\Ra^2)}
+\varepsilon_p^4\|\p^{\alpha}((\p_2 \theta_p)^2N_p \delta)\|_{L^q(\Ra^2)}
\\&+\varepsilon_p^4\|\p^{\alpha}((\p_1 N_p)^2N_p \delta
K')\|_{L^q(\Ra^2)}\\&+\varepsilon_p^6\|\p^{\alpha}((\p_2 N_p)^2N_p \delta K')\|_{L^q(\Ra^2)}\bigg),
\end{align*}
where $\delta=\delta(-\varepsilon^2_p\gamma N_p), \,K'=K'(-\varepsilon^2_p\gamma N_p), \, l=l(-\varepsilon^2_p\gamma N_p).$ Using our hypothesis and the chain rule we have for any $\beta \in \mathbb{N}^2, \, |\beta|\leq k+1,\,  1<q<\infty$
$$
\|\p^{\beta}(K'(-\varepsilon^2_p\gamma N_p))\|_{L^q(\Ra^2)}+\|\p^{\beta}(\delta(-\varepsilon^2_p\gamma N_p))\|_{L^q(\Ra^2)}+\|\p^{\beta}(l(-\varepsilon^2_p\gamma N_p))\|_{L^q(\Ra^2)} \leq C.
$$
Thus using Leibniz  formula, our hypothesis and \eqref{6.18} we have 
$$\bigg|\bigg|\frac{\p^{\alpha}(h_1h_2)}{\varepsilon^2}\bigg|\bigg|_{L^q(\Ra^2)}
\leq C.$$
Applying the operator $\p^{\alpha}$ on  \eqref{4.16}, \eqref{4.17}, \eqref{4.18} and using our hypothesis and \eqref{6.18} we obtain claim \eqref{6.19}.
\end{proof}
Finally let $|\alpha | \leq k+1$ using  \eqref{equationrecurrence}  we have 
\begin{equation}
    \partial^{\alpha}N_p=K_{\varepsilon_p}^{2,0} \ast \partial^{\alpha}f_p + \sum_{i+j=2} \varepsilon_p^2 K_{\varepsilon_p}^{i,j} \ast \partial^{\alpha}R_{\varepsilon_p}^{i,j}.
\end{equation}
Then using proposition \ref{inégalitéconvolution}, there exists $C(q)$ such that 
 \begin{equation}
    ||\partial^{\alpha}N_p||_{L^{q}(\mathbb{R}^2)}\leq C(q) \bigg( || \partial^{\alpha}f_p||_{L^{q}(\mathbb{R}^2)} + \sum_{i+j=2} \varepsilon_p^2 || \partial^{\alpha}R_{\varepsilon_p}^{i,j}||_{L^{q}(\mathbb{R}^2)}, \bigg),
\end{equation}
 \begin{equation}
 \begin{split}
 \label{6.22}
     ||\partial^{\alpha} \partial_1 N_p ||_{L^{q}(\mathbb{R}^2)} \leq C(q) \bigg(& ||\partial^{\alpha} f_p ||_{L^{q}(\mathbb{R}^2)}
     \\&+\varepsilon_p^2\big(||\partial^{\alpha} R_{\varepsilon_p}^{2,0} ||_{L^{q}(\mathbb{R}^2)}+||\partial^{\alpha} R_{\varepsilon_p}^{1,1} ||_{L^{q}(\mathbb{R}^2)} \big)\\&\qquad \qquad \qquad \qquad +\varepsilon_p ||\partial^{\alpha} R_{\varepsilon_p}^{0,2} ||_{L^{q}(\mathbb{R}^2)} \bigg),
    \end{split}
 \end{equation}
and
  \begin{equation}
  \label{6.23}
  \begin{split}
          ||\partial^{\alpha} \partial_2 N_p ||_{L^{q}(\mathbb{R}^2)}+ ||\partial^{\alpha} \partial_1^2 N_p ||_{L^{q}(\mathbb{R}^2)}+\varepsilon_p||\partial^{\alpha} \partial_1 \partial_2 N_p ||_{L^{q}(\mathbb{R}^2)}+\varepsilon_p^2||\partial^{\alpha}  \partial_2^2 N_p ||_{L^{q}(\mathbb{R}^2)}
          \\
          \leq C(q) \bigg( ||\partial^{\alpha} f_p ||_{L^{q}(\mathbb{R}^2)}
     +\varepsilon_p^2||\partial^{\alpha} R_{\varepsilon_p}^{2,0} ||_{L^{q}(\mathbb{R}^2)}+\varepsilon_p||\partial^{\alpha} R_{\varepsilon_p}^{1,1} ||_{L^{q}(\mathbb{R}^2)} +||\partial^{\alpha} R_{\varepsilon_p}^{0,2} ||_{L^{q}(\mathbb{R}^2)} \bigg).
  \end{split}
 \end{equation}
 We conclude the induction by using \eqref{6.19}.
 \paragraph{Conclusion}
 We have proved proposition \ref{proposition 6.1}, theorem \ref{bornes sobolev} follows from our discussion at the beginning of the section. 
 \section{Strong convergence}
 \label{patate}
 \subsection{Strong local convergence}
 \begin{proposition}
 \label{convergencefortelocal}
 Let $(p_n)_{n \in \mathbb{N}}$ such that $p_n \rightarrow 0$. Then there exists $N_0$ a non constant solution of \eqref{SW}  such that, up to a subsequence, 
 \begin{equation}
     \label{7.1}
        \forall 1<q<\infty,\, \forall k \in\mathbb{N}, \, N_{p_n} \rightharpoonup N_0 \,\; \mbox{in} \, \, W^{k,q}\big(\mathbb{R}^2\big), \, \mbox{when}\; n \rightarrow \infty.
 \end{equation}
Thus for any $0\leq \gamma <1$ and any compact subset $\mathcal{K}$, we obtain  
 \begin{equation}
 \label{7.2}
     N_{p_n} \rightarrow N_0 \; \mbox{in} \; C^{0,\gamma}(\mathcal{K}),\, \mbox{when }\, n \rightarrow \infty.
 \end{equation}
 \end{proposition}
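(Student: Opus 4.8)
The plan is to feed the uniform Sobolev bounds of Theorem \ref{bornes sobolev} into the compact embeddings recalled in Section 2, thereby promoting the weak $L^2$ convergence of Proposition \ref{limitecommune} to weak $W^{k,q}$ and to strong \emph{local} convergence, and then to pass to the limit in the reformulated equation \eqref{presque KP1} to recognize $N_0$ as a solution of \eqref{SW}.

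First I would fix the subsequence once and for all. Since Theorem \ref{bornes sobolev} gives $\|N_{p_n}\|_{W^{k,q}(\mathbb{R}^2)}\leq C(k,q)$ uniformly in $n$ for every $k\in\mathbb{N}$ and $1<q<\infty$, and each $W^{k,q}(\mathbb{R}^2)$ is reflexive, a diagonal extraction over a countable family of exponents produces a single subsequence along which $N_{p_n}$ converges weakly in every $W^{k,q}(\mathbb{R}^2)$; uniqueness of distributional limits forces this weak limit to be the function $N_0\in L^2(\mathbb{R}^2)$ already obtained in Proposition \ref{limitecommune}, which is \eqref{7.1}. For \eqref{7.2} I fix $0\leq\gamma<1$ and a bounded smooth $\Omega$, choose $q$ large enough that $\gamma<1-\tfrac{2}{q}$, and invoke the compact embedding \eqref{injection compacte holder}: being bounded in $W^{1,q}(\Omega)$, the sequence is precompact in $C^{0,\gamma}(\Omega)$, and its only possible limit is $N_0$. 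Exhausting $\mathbb{R}^2$ by compacts and diagonalizing once more yields \eqref{7.2} along the same subsequence.

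Next I would identify the limit by testing \eqref{presque KP1} against an arbitrary $\varphi\in C^\infty_c(\mathbb{R}^2)$. After integration by parts the linear part $\int N_{p_n}(\partial_1^4\varphi-\Delta\varphi)$ converges to $\int N_0(\partial_1^4\varphi-\Delta\varphi)$ by weak $L^2$ convergence, while the term coming from $\mathbf{L}(N_{p_n},\theta_{p_n})$ in \eqref{definition L} pairs with $\varphi$ through the coefficients $\varepsilon_{p_n}^2,\varepsilon_{p_n}^4$ and hence vanishes. The main right-hand side contribution is $-\partial_1^2 f_{p_n}$, and the point is that $f_{p_n}\to\tfrac12 N_0^2$ in $L^1_{\mathrm{loc}}$: indeed $N_{p_n}\to N_0$ uniformly on $\mathrm{supp}\,\varphi$ by \eqref{7.2}, while $\partial_1\theta_{p_n}\to N_0$ in $L^2(\mathrm{supp}\,\varphi)$ because $\|N_{p_n}-\partial_1\theta_{p_n}\|_{L^2}\to 0$ by \eqref{Ntheta}; the algebraic identity $\gamma\big(1+\tfrac{g''(1)}{2}+\tfrac12\big)=\tfrac12$ then identifies the limit of $f_{p_n}=\gamma\big(N_{p_n}\partial_1\theta_{p_n}+\tfrac{g''(1)}{2}N_{p_n}^2+\tfrac12(\partial_1\theta_{p_n})^2\big)$. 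Passing to the limit gives
$$\partial_1^4 N_0-\Delta N_0+\tfrac12\partial_1^2(N_0^2)=0\quad\text{in the sense of distributions,}$$
which, by the reformulation \eqref{SWW}, means exactly that $N_0$ solves \eqref{SW}. (Equivalently, one could pass to the limit directly in the convolution identity \eqref{equationrecurrence}, using $\widehat{K^{2,0}_{\varepsilon_{p_n}}}\to\widehat{K}_0$.)

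The hard part will be the remainder $\varepsilon_{p_n}^2\sum_{i+j=2}\partial_1^i\partial_2^j R^{i,j}_{\varepsilon_{p_n}}$. The contributions of $R^{0,2}$ and $R^{1,1}$ are harmless, since Proposition \ref{controle reste} bounds them in $L^1$ and they carry the surplus factor $\varepsilon_{p_n}^2$. The delicate term is $R^{2,0}$, whose $L^1$ norm is only controlled by $C\varepsilon_{p_n}^{-2}$, so the crude global bound gives merely an $O(1)$ contribution; a genuinely local argument is required. I would therefore expand its definition \eqref{4.16} term by term and show that, once paired in $L^1$ on $\mathrm{supp}\,\varphi$ against $\partial_1^2\varphi$, every piece vanishes: the leading term carries the factor $c_{p_n}-1\to 0$, the cubic and the $h_1 h_2$ contributions carry surplus powers of $\varepsilon_{p_n}$, and the quadratic derivative terms $(\partial_2\theta_{p_n})^2$, $(\partial_1 N_{p_n})^2$, $(\partial_2 N_{p_n})^2$ have their $L^1$ norms controlled by the Pohozaev estimates \eqref{cool} and \eqref{coool}, so that multiplication by $\varepsilon_{p_n}^2$ or $\varepsilon_{p_n}^4$ sends them to $0$ in $L^1_{\mathrm{loc}}$. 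This is the technical heart of the argument: local, rather than global, vanishing is precisely what the distributional test allows. Finally, \eqref{minimal} together with the local uniform convergence \eqref{7.2} gives $N_0(0)\geq C>0$; since $N_0\in L^2(\mathbb{R}^2)$ the only constant solution would be $0$, so $N_0$ is non-constant, completing the proof.
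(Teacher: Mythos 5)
Your proposal is correct and follows essentially the same route as the paper: uniform bounds from Theorem \ref{bornes sobolev} plus Banach--Alaoglu give \eqref{7.1}, the compact embedding \eqref{injection compacte holder} gives \eqref{7.2}, non-constancy comes from \eqref{minimal}, and the limit equation is identified by passing to the limit in \eqref{presque KP1}, using \eqref{Ntheta} to replace $f_{p_n}$ by $\tfrac12 N_{p_n}^2$ and showing the remainder terms vanish. The only (minor) difference is technical: the paper proves the global estimates $\varepsilon_{p_n}^2\|\partial_1^i\partial_2^j R^{i,j}_{\varepsilon_{p_n}}\|_{L^1(\mathbb{R}^2)}\to 0$ directly from Theorem \ref{bornes sobolev} and \eqref{6.19}, whereas you integrate by parts onto the test function and treat $R^{2,0}_{\varepsilon_{p_n}}$ term by term using Proposition \ref{controle reste} and the Pohozaev bounds, which is an equivalent (and if anything slightly more economical) way to make the same step rigorous.
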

 \begin{proof}
Combining \eqref{11} with Banach-Alaoglu theorem, there exists a subsequence $(p_n)_{n \in \mathbb{N}}$ and a function $N_0$ such that $N_{p_n} \rightharpoonup N_0 \; \text{in} \, W^{k,q}\big(\mathbb{R}^2\big)$  for any $1<q<\infty$, $k \in \mathbb{N}$. Then  \eqref{7.2} is a consequence of \eqref{7.1} and \eqref{injection compacte holder}. Thus $N_0$ is non constant using \eqref{minimal}. We will now prove that $N_0$ is a solution of \eqref{SW}. 
We recall that
     \begin{equation}
    f_p=\gamma \bigg( N_p\partial_1\theta_p+\frac{g''(1)}{2}N^2_p+\frac{1}{2}(\partial_1\theta_p)^2\bigg),
\end{equation}
with 
\begin{equation}
  \gamma\bigg( 1+\frac{g''(1)}{2}+ \frac{1}{2}\bigg)=\gamma\bigg(\frac{3+g''(1)}{2}\bigg)=\frac{1}{2},
\end{equation}
and
\begin{equation}
\begin{split}
\label{poufpoufpouf}
    \partial_1^4N_p- \Delta N_p= &-\partial_1^2f_p+\mathbf{L}(N_p,\theta_p)\\&-\varepsilon_p^2\sum_{i+j=2}\partial_1^i\partial_2^j R_{\varepsilon_p}^{i,j}.
    \end{split}
\end{equation}
Using \eqref{definition L} and theorem \ref{bornes sobolev} we have
$$
\|\mathbf{L}(N_{p},\theta_{p})\|_{L^2(\Ra^2)} \underset{n \rightarrow \infty}{\longrightarrow} 0.
$$
Using \eqref{4.17} and theorem \ref{bornes sobolev} we have 
$$
\varepsilon_p^2||\p_1^2R_{\varepsilon_p}^{2,0}||_{L^{1}(\mathbb{R}^2)}\underset{n \rightarrow \infty}{\longrightarrow} 0,
$$
then using \eqref{6.19} we have 
$$
\varepsilon_p^2||\p_2^2R_{\varepsilon_p}^{0,2}||_{L^{1}(\mathbb{R}^2)}+\varepsilon_p^2||\p_1\p_2R_{\varepsilon_p}^{1,1}||_{L^{1}(\mathbb{R}^2)}\underset{n \rightarrow \infty}{\longrightarrow} 0.
$$
On the other hand, using \eqref{7.2}, we have 
$$
\|N_{p_n}^2-N_0^2\|_{L^{\infty}(K)}\underset{n \rightarrow \infty}{\longrightarrow} 0,
$$
for any compact $\mathcal{K}$.
Thus $$N_{p_n}^2\underset{n \rightarrow \infty}{\longrightarrow} N_0^2\; \text{in} \; D'(\Ra^2).$$
 Combining \eqref{Ntheta} and $$ ||(\partial_1 \theta_p)^2- N^2_p||_{L^1(\mathbb{R}^2)} \leq ||\partial_1 \theta_p- N_p||_{L^2(\mathbb{R}^2)}||\partial_1 \theta_p+ N_p||_{L^2(\mathbb{R}^2)}, $$
we obtain 
$$
||N\partial_1 \theta_p- N^2_p||_{L^1(\mathbb{R}^2)} \underset{p \rightarrow 0}{\longrightarrow} 0, \, ||(\partial_1 \theta_p)^2- N^2_p||_{L^1(\mathbb{R}^2)}\underset{p \rightarrow 0}{\longrightarrow} 0.
$$
Thus, we deduce
\begin{align*}
    \left|\left|\frac{1}{2}N_p^2-f_p\right|\right|_{L^{1}(\mathbb{R}^2)} \leq &\big(  |\gamma| ||N_p\partial_1 \theta_p- N^2_p||_{L^1(\mathbb{R}^2)}\\&+ \frac{|\gamma|}{2}||(\partial_1 \theta_p)^2- N^2_p||_{L^1(\mathbb{R}^2)}\bigg) \underset{p \rightarrow 0}{\longrightarrow}0.
\end{align*}
Finally passing to the limit in \eqref{poufpoufpouf} we have 
\begin{equation}
    \partial_1^4N_0- \Delta N_0 +\frac{1}{2} \partial_1^2(N_0^2)=0.
\end{equation}
\end{proof}

We will now prove the strong global convergence.
\subsection{Strong global convergence}
We recall that
\begin{align*}
E(\rho_p,\phi_p)=\frac{K(1)\gamma^2\varepsilon_p}{2} \bigg(E_0(N_p, \theta_p) +\varepsilon_p^2 (E_2(N_p,\theta_p)))+
  \varepsilon_p^4 (E_4(N_p,\theta_p)) \bigg),
\end{align*}
with
$$E_0(N_p, \theta_p)=\int_{\mathbb{R}^2}  N^2_p+\partial_1 \theta_p^2, $$

\begin{align*}
E_2(N_p, \theta_p)=2 \bigg(\int_{\mathbb{R}^2}    \frac{(\partial_1 N_p(x))^2}{2}+\frac{|\partial_2 \theta_p (x)|^2}{2}- \frac{\gamma}{6}\big( 3N_p(\partial_1 \theta_p)^2+ g''(1)N_p^3)dx\bigg),
\end{align*}

\begin{align*}
E_4(N_p,\theta_p)=\int_{\mathbb{R}^2}& K\bigg(1-\gamma\varepsilon_p^2N_p (x) \bigg) \frac{1}{\sqrt{K(1)}^2}|\partial_2N_p(x)|^2-
\gamma N_p(x)|\partial_2 \theta_p (x)|^2\\&+N_p(x) j(x)\bigg( 
 (\partial_1N_p(x))^2 +  \varepsilon_p^2 (\partial_2 N_p(x))^2 \bigg)\\&+\gamma^4  N_p^4(x)l(x) dx,
\end{align*}
and 
\begin{align*}
    E_{KP}(\p_1 \theta_p)=\frac{1}{2}\int_{\Ra^2}(\p_1^2 \theta_p)^2+\frac{1}{2}\int_{\Ra^2}(\p_2 \theta_p)^2-\frac{1}{6}\int_{\Ra^2}(\p_1 \theta_p)^3.
\end{align*}
\begin{proposition}
\label{proposition7.2}
Let $(p_n)_{n \in \mathbb{N}}$ a sequence which converges to 0 and satisfies \eqref{7.1} and \eqref{7.2}. Then, up to a subsequence, there exists a positive constant $\mu_0$ such that
\begin{equation}
\label{le truc de la proposition 8.2}
    E_{KP}(\partial_1 \theta_{p_n}) \rightarrow \mathcal{E}^{KP}(\mu_0), \quad \mbox{ and} \quad \int_{\Ra^2}|\partial_1 \theta_{p_n}|^2 \underset{n \rightarrow \infty}{\longrightarrow} \mu_0>0.
\end{equation}
See section 2 for the definition of $\mathcal{E}^{KP}$.
\end{proposition}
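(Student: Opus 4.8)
The plan is to show that $\partial_1\theta_{p_n}$ is asymptotically a minimizing sequence for $\mathcal{E}^{KP}(\mu_0)$, so that the compactness result of Proposition~\ref{compacite solution KP} can later be applied. First I would identify $\mu_0$ and establish the $L^2$ statement, which is the easier half. Since $\int_{\Ra^2}N_p^2$ is bounded uniformly in $p$ (the estimate behind \eqref{controle de N  en normel L2}), one may extract a subsequence along which $\int_{\Ra^2}N_{p_n}^2\to\mu_0$. Writing $\int(\partial_1\theta_p)^2=\int N_p^2-2\int N_p(N_p-\partial_1\theta_p)+\int(N_p-\partial_1\theta_p)^2$ and applying \eqref{Ntheta} together with Cauchy--Schwarz, the last two terms tend to $0$; hence $\int_{\Ra^2}(\partial_1\theta_{p_n})^2\to\mu_0$ and likewise $I_p:=\int_{\Ra^2}N_p\partial_1\theta_p\to\mu_0$. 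Because the limit $N_0$ furnished by \eqref{7.1}--\eqref{7.2} is non-constant (by \eqref{minimal}), weak lower semicontinuity gives $\mu_0\ge\|N_0\|_{L^2}^2>0$. This yields the second convergence in \eqref{le truc de la proposition 8.2}.

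Next I would turn the expansion of Lemma~\ref{energie nouvelle fonction} into a clean identity. Using the uniform Sobolev bounds of Theorem~\ref{bornes sobolev}, every term of $E_4(N_p,\theta_p)$ is bounded independently of $p$, so $\varepsilon_p^2E_4(N_p,\theta_p)\to0$; and, replacing $\partial_1^2\theta_p$ by $\partial_1N_p$ up to $O(\varepsilon_p^2)$ via \eqref{camarche2}, using $\gamma(3+g''(1))=1$, and replacing $N_p$ by $\partial_1\theta_p$ in the cubic terms via \eqref{Ntheta}, one checks that $E_2(N_p,\theta_p)=2E_{KP}(\partial_1\theta_p)+o(1)$. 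Since $E_0(N_p,\theta_p)-2I_p=\int_{\Ra^2}(N_p-\partial_1\theta_p)^2\ge0$, subtracting the momentum identity $p=K(1)\gamma^2\varepsilon_p I_p$ from the energy identity produces the central relation
\[
\frac{2\big(E(\rho_p,\phi_p)-p\big)}{K(1)\gamma^2\varepsilon_p^3}=\frac{\int_{\Ra^2}(N_p-\partial_1\theta_p)^2}{\varepsilon_p^2}+2E_{KP}(\partial_1\theta_p)+o(1).
\]

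From here the lower bound is immediate: $\partial_1\theta_p\in Y(\Ra^2)$ (its derivatives are controlled in $L^2$ by Theorem~\ref{bornes sobolev} and the bound on $\partial_2\theta_p$), so by definition of the infimum and continuity of $\mu\mapsto\mathcal{E}^{KP}(\mu)=-\mu^3/(54S_{KP}^2)$ from Lemma~\ref{lemme 2.1}, one gets $E_{KP}(\partial_1\theta_p)\ge\mathcal{E}^{KP}\big(\int(\partial_1\theta_p)^2\big)\to\mathcal{E}^{KP}(\mu_0)$. Dropping the non-negative term in the central relation gives conversely $E_{KP}(\partial_1\theta_p)\le \big(E(\rho_p,\phi_p)-p\big)/\big(K(1)\gamma^2\varepsilon_p^3\big)+o(1)$. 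Thus the whole proof reduces to the \emph{sharp} upper bound $\limsup \big(E-p\big)/\big(K(1)\gamma^2\varepsilon_p^3\big)\le\mathcal{E}^{KP}(\mu_0)$; here the crude constants $\alpha,\beta$ of \eqref{energie-moment} are not enough, and this is where the difficulty lies.

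This sharp bound I would obtain by a recovery-sequence argument exploiting the minimality of $(\rho_p,\phi_p)$. I would pick a ground state $N_*$ with $\int N_*^2=\mu_0$ (a minimizer for $\mathcal{E}^{KP}(\mu_0)$, Lemma~\ref{lemme 2.1}), set $\partial_1\theta_*=N_*$ and $\partial_2\theta_*=\partial_1^{-1}\partial_2N_*$, and invert the rescaling \eqref{chgmt variable1}--\eqref{chgmt variable2} with a free parameter $\varepsilon$ to build an admissible pair $(\rho_*,\phi_*)\in\mathcal{H}$; the choice $\varepsilon=p/(K(1)\gamma^2\mu_0)=\varepsilon_p(1+o(1))$ enforces $P(\rho_*,\phi_*)=p$. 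For $p$ small $|\rho_*-1|\ll1$, so $\widetilde E=E$ on this pair and Lemma~\ref{energie nouvelle fonction} gives $E(\rho_*,\phi_*)=p+K(1)\gamma^2\varepsilon^3\mathcal{E}^{KP}(\mu_0)+O(\varepsilon^5)$. Minimality in \eqref{minimisateur pour E-k} then yields $E(\rho_p,\phi_p)\le E(\rho_*,\phi_*)$, hence the desired $\limsup$. Combined with the lower bound and the central relation this squeezes everything, giving $E_{KP}(\partial_1\theta_{p_n})\to\mathcal{E}^{KP}(\mu_0)$ (and, as a by-product, $\varepsilon_p^{-2}\int(N_p-\partial_1\theta_p)^2\to0$). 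I expect the genuinely delicate points to be the admissibility of $(\rho_*,\phi_*)$ in $\mathcal{H}$, the exact momentum matching, and the verification that $E(\rho_*,\phi_*)=p+K(1)\gamma^2\varepsilon^3\mathcal{E}^{KP}(\mu_0)+o(\varepsilon^3)$.
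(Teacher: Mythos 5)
Your proposal is correct and follows essentially the same route as the paper: its Lemma \ref{3} is exactly your recovery-sequence/minimality argument (using the speed-$1$ ground state rather than one of mass $\mu_0$, which is equivalent by the scaling invariance in Lemma \ref{lemme 2.1}), and its Lemma \ref{16} is your central relation together with the same two-sided squeeze between the sharp upper bound and the lower bound $E_{KP}(\partial_1\theta_p)\geq \mathcal{E}^{KP}\big(\int(\partial_1\theta_p)^2\big)$. The only cosmetic difference is that the paper first proves $\varepsilon_p^{-2}\int_{\Ra^2}(N_p-\partial_1\theta_p)^2\to 0$ directly from \eqref{4.13} and Theorem \ref{bornes sobolev}, whereas you keep this term as an explicit non-negative quantity and recover its vanishing as a by-product of the squeeze.
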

First of all, we prove three lemmas.
\begin{lemme}
\label{3}
There exists a constant $C$, not depending on p, such that 
\begin{equation}
    E(\rho_p,\phi_p)-p \leq  \frac{-1}{(K(1)\gamma^2)^2 54 S_{KP}^2}p^3+Cp^4.
\end{equation}
\end{lemme}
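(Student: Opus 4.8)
The plan is to exploit that $(\rho_p,\phi_p)$ solves the minimization problem \eqref{minimisateur pour E-k}: since the minimizer satisfies $\|\rho_p-1\|_\infty\ll1$, we have $E(\rho_p,\phi_p)=\widetilde E(\rho_p,\phi_p)$, and this quantity is bounded above by $\widetilde E(\rho,\phi)=E(\rho,\phi)$ of \emph{any} competitor $(\rho,\phi)\in\mathcal H$ with $P(\rho,\phi)=p$ and $\|\rho-1\|_\infty\ll1$. Thus it suffices to produce one well-chosen competitor whose energy admits the announced expansion. I would build it, by inverting the rescaling \eqref{chgmt variable1}--\eqref{chgmt variable2}, from the speed-one ground state of \eqref{KP1}.

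Concretely, let $N$ be a minimizer of \eqref{probleme minimization} for $\mu=\mu^\ast=3S_{KP}$, so that $N\in Y(\mathbb{R}^2)$ is a ground state of \eqref{SW} with $\int_{\Ra^2}N^2=\mu^\ast$ and, by Lemma \ref{lemme 2.1}, $E_{KP}(N)=\mathcal E^{KP}(\mu^\ast)=-\frac{(\mu^\ast)^3}{54S_{KP}^2}$. I set $\theta:=\partial_1^{-1}N$, which is well defined in $\dot H^1$ because $N\in Y$ (so $\partial_1\theta=N$ and $\partial_2\theta=\partial_1^{-1}\partial_2N\in L^2$), and for a small parameter $\varepsilon>0$ I define
\[
\rho(x)=1-\gamma\varepsilon^2 N\Big(\tfrac{\varepsilon x_1}{\sqrt{K(1)}},\tfrac{\varepsilon^2 x_2}{\sqrt{K(1)}}\Big),\qquad
\phi(x)=-\gamma\varepsilon\sqrt{K(1)}\,\theta\Big(\tfrac{\varepsilon x_1}{\sqrt{K(1)}},\tfrac{\varepsilon^2 x_2}{\sqrt{K(1)}}\Big).
\]
Since $N$ is smooth, bounded and decaying, $(\rho,\phi)\in\mathcal H$ has finite energy and $\|\rho-1\|_\infty\le\gamma\varepsilon^2\|N\|_\infty\ll1$ for $\varepsilon$ small, so $\widetilde E(\rho,\phi)=E(\rho,\phi)$.

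Next I would read off the momentum and energy from the change of variables underlying Lemma \ref{energie nouvelle fonction}, which is a purely algebraic identity valid for every pair of the above form. Using $\partial_1\theta=N$ gives $P(\rho,\phi)=\varepsilon\gamma^2K(1)\int_{\Ra^2}N\,\partial_1\theta=\varepsilon\gamma^2K(1)\mu^\ast$ and $E_0=\int_{\Ra^2}(N^2+(\partial_1\theta)^2)=2\mu^\ast$. I fix $\varepsilon=\varepsilon(p)$ by imposing $P(\rho,\phi)=p$, i.e. $\varepsilon=\frac{p}{\gamma^2K(1)\mu^\ast}$, which is admissible and yields $\varepsilon\sim p$. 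The relation $\gamma(3+g''(1))=1$ collapses the cubic part of $E_2$ to $-\tfrac16 N^3$, so that with $\partial_1\theta=N$ one has exactly $E_2=2E_{KP}(N)$; consequently the order-$\varepsilon$ term $\frac{K(1)\gamma^2\varepsilon}{2}E_0$ equals $P=p$ and cancels, leaving
\[
E(\rho,\phi)-p=K(1)\gamma^2\varepsilon^3 E_{KP}(N)+\frac{K(1)\gamma^2\varepsilon^5}{2}E_4.
\]

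Finally I would substitute $E_{KP}(N)=-\frac{(\mu^\ast)^3}{54S_{KP}^2}$ and $\varepsilon=\frac{p}{\gamma^2K(1)\mu^\ast}$; the factors $(\mu^\ast)^3$ cancel and the leading term becomes exactly $\frac{-1}{(K(1)\gamma^2)^2\,54S_{KP}^2}\,p^3$, as required. Because $N$ is a fixed profile, $E_4$ stays bounded uniformly in $\varepsilon$ (the coefficient $K(1-\gamma\varepsilon^2N)$ being bounded for $\varepsilon$ small), so $\frac{K(1)\gamma^2\varepsilon^5}{2}E_4=O(\varepsilon^5)=O(p^5)\le Cp^4$ for $p$ small; combined with $E(\rho_p,\phi_p)\le E(\rho,\phi)$ this gives the claim. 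The main obstacle is not the final algebra but the bookkeeping: one must verify that the algebraic energy identity of Lemma \ref{energie nouvelle fonction} holds verbatim for the trial pair — in particular that all integrals entering $E_2$ and $E_4$ converge for $N\in Y$ and $\theta=\partial_1^{-1}N$ — and that $E_4$ is controlled uniformly in $\varepsilon$, so that the competitor is genuinely admissible in \eqref{minimisateur pour E-k} with $P=p$ exactly.
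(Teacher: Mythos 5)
Your proposal is correct and follows essentially the same route as the paper: the paper also takes a speed-one ground state $\omega$ of \eqref{SW}, an antiderivative $v$ with $\partial_1 v=\omega$ (citing \cite{BGS} and \cite{Gravejat4} for its existence and integrability), builds the identical rescaled competitor $(\rho,\phi)$, fixes $\varepsilon$ by $p=K(1)\gamma^2\varepsilon\int\omega^2$, expands $E(\rho,\phi)=p+K(1)\gamma^2\varepsilon^3E_{KP}(\omega)+\frac{K(1)\gamma^2}{2}\varepsilon^5E_4$, and concludes via the variational characterization \eqref{minimisateur pour E-k} together with $E_{KP}(\omega)/(\int\omega^2)^3=-1/(54S_{KP}^2)$ from Lemma \ref{lemme 2.1}. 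The only cosmetic difference is that you normalize $\int N^2=\mu^*$ and cancel the factors $(\mu^*)^3$ explicitly, whereas the paper keeps $\int\omega^2$ general and uses the scaling identity of Lemma \ref{lemme 2.1} at the last step.
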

\begin{proof}
Let $\omega$ a ground state of \eqref{SW} and $p>0$. According to \cite{BouardSaut3,debouardsaut} $\omega$ is smooth, belongs to $L^q(\mathbb{R}^2)$ for any $q>1$  its first order derivatives are in $L^2(\mathbb{R}^2)$. There exists $v$ such that $\partial_1v=\omega$ (see lemma 3.9 \cite{BGS} and \cite{Gravejat4}) with $v$ smooth. Moreover $v$ is in $L^q(\mathbb{R}^2),\, q>2$ and its gradient belongs to $L^q(\mathbb{R}^2),\, q>1$.
We let
\begin{equation}
\begin{split}
     \rho(x_1,x_2)=1- \varepsilon^2 \gamma \omega \bigg(\varepsilon \frac{ x_1}{\sqrt{K(1)}}, \varepsilon^2\frac{x_2 }{\sqrt{K(1)}} \bigg),
 \end{split}
\end{equation}
\begin{equation}
    \phi(x_1,x_2)=- \gamma \sqrt{K(1)}\varepsilon v\bigg(\varepsilon \frac{ x_1}{\sqrt{K(1)}}, \varepsilon^2\frac{x_2 }{\sqrt{K(1)}} \bigg),
\end{equation}
where the constant  $\varepsilon$ is chosen such that
$$
p=K(1)\gamma^2\varepsilon\int_{\Ra^2} \omega^2.
$$
We observe that $\rho \in 1+H^1(\mathbb{R}^2)$ and $\phi \in \dot{H}^1(\mathbb{R}^2)$.
Thus, by computation, we have $$E(\rho,\phi)=K(1)\gamma^2 \varepsilon \int_{\Ra^2} \omega^2+K(1)\gamma^2
\varepsilon^3 E_{KP}(\omega)+\frac{K(1)\gamma^2}{2}\varepsilon^5E_4(\omega,v). $$
Then, using lemma \ref{lemme 2.1}, we obtain 
\begin{align*}
    E(\rho,\phi)-p&\leq K(1)\gamma^2\varepsilon^3E_{KP}(\omega)+C\varepsilon^5
    \\ &\leq K(1)\gamma^2\bigg(\frac{p}{K(1)\gamma^2\big(\int \omega^2\big)}\bigg)^3E_{KP}(\omega)+Cp^4
    \\ &\leq \frac{p^3}{(K(1)\gamma^2)^2}\frac{E_{KP}(\omega)}{(\int \omega^2)^3}+Cp^4
    \\ & \leq \frac{p^3}{(K(1)\gamma^2)^2}\bigg(\frac{-1}{54S_{KP}^2}\bigg)+Cp^4.
\end{align*}
For $p\ll 1$, since $\omega \in L^{\infty}(\Ra^2)$ we have $|\rho-1|\ll 1\; \text{and} \; |\rho_p-1|\ll 1$.
Thus using \eqref{minimisateur pour E-k}  we have
 \begin{align*}
    E(\rho_p,\phi_p)-p&=E(\rho_p,\phi_p)-p
    \\& \leq E(\rho,\phi)-p\\&= E(\rho,\phi)-p\\ & \leq \frac{p^3}{(K(1)\gamma^2)^2}\bigg(\frac{-1}{54S_{KP}^2}\bigg)+Cp^4.
 \end{align*}
\end{proof}
\begin{lemme}
\label{16}
We have
\begin{equation}
\label{8.11}
    E(N_p,\theta_p)-p=K(1)\gamma^2\varepsilon_p^3E_{Kp}\big(\partial_1\theta_p\big)+o(\varepsilon_p^3),
\end{equation}
and
\begin{equation}
    -\frac{1}{54S_{KP}^2}\bigg( \int_{\Ra^2} (\partial_1\theta_p)^2\bigg)^3 \leq E_{KP}(\partial_1\theta_p)\leq -\frac{1}{54S_{KP}^2}\bigg( \int_{\Ra^2} (\partial_1\theta_p)^2\bigg)^3 +o(1).
\end{equation}
\end{lemme}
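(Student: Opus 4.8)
The plan is to combine the exact energy expansion of Lemma \ref{energie nouvelle fonction} with the momentum identity $p=\varepsilon_p\gamma^2 K(1)\int_{\Ra^2}N_p\partial_1\theta_p$ stated there, so as to isolate the square $(N_p-\partial_1\theta_p)^2$. Completing the square via $\tfrac12 E_0(N_p,\theta_p)-\int_{\Ra^2}N_p\partial_1\theta_p=\tfrac12\int_{\Ra^2}(N_p-\partial_1\theta_p)^2$ gives
\begin{equation*}
E(\rho_p,\phi_p)-p=\frac{K(1)\gamma^2\varepsilon_p}{2}\int_{\Ra^2}(N_p-\partial_1\theta_p)^2+\frac{K(1)\gamma^2\varepsilon_p^3}{2}E_2(N_p,\theta_p)+\frac{K(1)\gamma^2\varepsilon_p^5}{2}E_4(N_p,\theta_p).
\end{equation*}
Using the uniform Sobolev bounds of Theorem \ref{bornes sobolev} together with $\int_{\Ra^2}(\partial_2\theta_p)^2\le C$ from \eqref{coool}, every term of $E_4$ is controlled uniformly, hence $E_4(N_p,\theta_p)=O(1)$ and the last summand is $o(\varepsilon_p^3)$.

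First I would show $\tfrac12 E_2(N_p,\theta_p)=E_{KP}(\partial_1\theta_p)+o(1)$. Setting $\delta:=N_p-\partial_1\theta_p$ and substituting $\partial_1\theta_p=N_p-\delta$ in the cubic part of $E_2$, the defining relation $\gamma(3+g''(1))=1$ makes the leading cubic $-\tfrac{\gamma}{6}(3+g''(1))N_p^3$ coincide with the KP cubic $-\tfrac16 N_p^3$; what remains are integrals of $N_p^2\delta$, $N_p\delta^2$ and $\delta^3$, all $o(1)$ since $\|\delta\|_{L^2}\to0$ by \eqref{Ntheta} while $N_p$ is bounded in every $L^q$ and in $L^\infty$. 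For the quadratic part I would write $(\partial_1 N_p)^2-(\partial_1^2\theta_p)^2=(\partial_1 N_p+\partial_1^2\theta_p)\,\partial_1\delta$ and invoke \eqref{camarche2}, namely $\partial_1\delta=\varepsilon_p^2(\mathbf{L}_1+R)$; the prefactor $\varepsilon_p^2$ beats the only unbounded contribution $\|\partial_2^2\theta_p\|_{L^2}=O(\varepsilon_p^{-1})$ allowed by \eqref{11}, giving $\|\partial_1\delta\|_{L^2}=O(\varepsilon_p)$ and hence $o(1)$ after Cauchy--Schwarz against the $O(1)$ factor. This produces the pivot identity
\begin{equation*}
E(\rho_p,\phi_p)-p=\frac{K(1)\gamma^2\varepsilon_p}{2}\int_{\Ra^2}(N_p-\partial_1\theta_p)^2+K(1)\gamma^2\varepsilon_p^3 E_{KP}(\partial_1\theta_p)+o(\varepsilon_p^3).
\end{equation*}

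The main obstacle is that \eqref{8.11} forces $\int_{\Ra^2}(N_p-\partial_1\theta_p)^2=o(\varepsilon_p^2)$, whereas \eqref{Ntheta} only yields $O(\varepsilon_p^{1/2})$ --- precisely the order at which the square competes with the term $\varepsilon_p^3 E_{KP}$. I would close this gap by a squeezing argument. Inserting into the pivot identity the upper bound of Lemma \ref{3} and the momentum expansion $p=\varepsilon_p\gamma^2 K(1)\mu_p+o(\varepsilon_p)$, where $\mu_p:=\int_{\Ra^2}(\partial_1\theta_p)^2$ (so that $p^3=\varepsilon_p^3(\gamma^2K(1))^3\mu_p^3+o(\varepsilon_p^3)$), and dividing by $\tfrac12 K(1)\gamma^2\varepsilon_p$, I obtain
\begin{equation*}
\int_{\Ra^2}(N_p-\partial_1\theta_p)^2\le-2\varepsilon_p^2\Big(E_{KP}(\partial_1\theta_p)+\frac{\mu_p^3}{54S_{KP}^2}\Big)+o(\varepsilon_p^2).
\end{equation*}
Since $\partial_1\theta_p\in Y(\Ra^2)$, the variational characterization of $\mathcal{E}^{KP}$ in Lemma \ref{lemme 2.1} gives $E_{KP}(\partial_1\theta_p)\ge\mathcal{E}^{KP}(\mu_p)=-\mu_p^3/(54S_{KP}^2)$, so the parenthesis is nonnegative and the right-hand side is $\le o(\varepsilon_p^2)$; as the left-hand side is nonnegative, $\int_{\Ra^2}(N_p-\partial_1\theta_p)^2=o(\varepsilon_p^2)$. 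Feeding this back into the pivot identity proves \eqref{8.11}.

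Finally, the second assertion is an immediate consequence. Its lower bound is once more the variational inequality $E_{KP}(\partial_1\theta_p)\ge-\mu_p^3/(54S_{KP}^2)$; its upper bound follows by dividing \eqref{8.11} by $K(1)\gamma^2\varepsilon_p^3$, which gives $E_{KP}(\partial_1\theta_p)=(E(\rho_p,\phi_p)-p)/(K(1)\gamma^2\varepsilon_p^3)+o(1)$, and then inserting Lemma \ref{3} with the expansion of $p^3$ in $\mu_p$ to conclude $E_{KP}(\partial_1\theta_p)\le-\mu_p^3/(54S_{KP}^2)+o(1)$.
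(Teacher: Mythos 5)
Your proof is correct, and every ingredient you invoke (Lemma \ref{energie nouvelle fonction}, \eqref{coool}, Theorem \ref{bornes sobolev}, \eqref{camarche2}, \eqref{Ntheta}, Lemma \ref{3}, Lemma \ref{lemme 2.1}) is indeed available at this stage of the paper; the skeleton (exact energy expansion, completing the square, $\tfrac12 E_2=E_{KP}(\partial_1\theta_p)+o(1)$, $E_4=O(1)$) coincides with the paper's. Where you genuinely diverge is the mechanism producing the decisive estimate $\int_{\Ra^2}(N_p-\partial_1\theta_p)^2=o(\varepsilon_p^2)$. The paper extracts it directly from the profile equation: by \eqref{4.13} and the uniform bounds of Theorem \ref{bornes sobolev}, $N_p-c_p\partial_1\theta_p$ is $\varepsilon_p^2$ times terms bounded in $L^2$, and $(1-c_p)\partial_1\theta_p=O(\varepsilon_p^2)$ in $L^2$, so $\|N_p-\partial_1\theta_p\|_{L^2}=O(\varepsilon_p^2)$ --- this is \eqref{meilleurconvergence}, even stronger than needed --- after which \eqref{8.11} and then the second inequality follow exactly as in your last paragraph. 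You instead keep the square term explicit in your pivot identity and squeeze it between $0$ and $o(\varepsilon_p^2)$ by playing the test-function upper bound of Lemma \ref{3} against the variational lower bound $E_{KP}(\partial_1\theta_p)\ge -\mu_p^3/(54S_{KP}^2)$ of Lemma \ref{lemme 2.1}. Both arguments are sound. The paper's route is shorter and quantitative (a rate $O(\varepsilon_p^4)$ rather than mere $o(\varepsilon_p^2)$), while yours is purely variational, never uses \eqref{4.13} or the weak convergence \eqref{7.1}, and extracts simultaneously the smallness of the square and the near-minimality of $E_{KP}(\partial_1\theta_p)$ from a single inequality, in the spirit of Bethuel--Gravejat--Saut; note also that your intermediate bound $\|\partial_1 N_p-\partial_1^2\theta_p\|_{L^2}=O(\varepsilon_p)$ is precisely the paper's \eqref{meilleurconvergence pour l'autre}. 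One caveat, common to your proof and the paper's: applying Lemma \ref{lemme 2.1} to $\partial_1\theta_p$ presupposes $\partial_1\theta_p\in Y(\Ra^2)$, which should be justified from the regularity and integrability properties of $\theta_p$ established in sections 4--6.
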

\begin{proof}
Combining \eqref{4.13} and \eqref{7.1} we obtain
\begin{equation}
\label{meilleurconvergence}
    \frac{1}{\varepsilon_p^2}\int_{\mathbb{R}^2}(N_p-\partial_1\theta_p)^2 \underset{p \rightarrow 0}{\longrightarrow} 0,
\end{equation}
and \begin{equation}
\label{meilleurconvergence pour l'autre}
    ||\partial_1N_p-\partial_1^2\theta_p||_{L^2(\mathbb{R}^2)}\leq M \varepsilon_p.
\end{equation}
Then, we have
\begin{align*}
    E(N_p,\theta_p)-p=&\frac{K(1)\gamma^2\varepsilon_p}{2}\int_{\Ra^2}\big(N_p-\partial_1\theta_p\big)^2+K(1)\gamma^2\varepsilon_p^3E_{KP}(\partial_1 \theta_p)
    \\
    +&\frac{K(1)\gamma^2\varepsilon_p^3}{2}\bigg(\int_{\Ra^2}\big((\partial_1 N_p(x))^2-(\partial_1^2 \theta_p)^2)dx + \\&+\frac{\gamma g''(1)}{3}(-N_p^3+ (\partial_1\theta_p)^3)+ \gamma\big(-N_p(\partial_1\theta_p)^2+(\partial_1\theta_p)^3 )dx \bigg)\\&+
    \frac{K(1)\gamma^2\varepsilon_p^5}{2}E_4(N_p,\theta_p).
\end{align*}
Thus
\begin{equation}
    E(N_p,\theta_p)-p=K(1)\gamma^2\varepsilon_p^3E_{KP}(\partial_1 \theta_p)+o(\varepsilon_p^3).
\end{equation}
Using lemma \ref{lemme 2.1}
\begin{equation}
    E_{KP}(\partial_1\theta_p) \geq \mathcal{E}_{min}^{KP} \bigg(\int_{\Ra^2} (\partial_1\theta_p)^2 \bigg) =-\frac{1}{54S_{KP}^2}\bigg( \int_{\Ra^2} (\partial_1\theta_p)^2\bigg)^3.
\end{equation}
Then combining \eqref{8.11}, lemma \ref{3} and \eqref{meilleurconvergence} we obtain
\begin{align*}
    E_{KP}(\partial_1 \theta_p)&\leq -\frac{p^3}{(K(1)\gamma^2)^354 S^2_{KP}}\frac{1}{\varepsilon^3}+o(1)\\
&\leq \frac{-1}{54S^2_{KP}}\bigg(\int_{\Ra^2} N_p\partial_1 \theta_p\bigg)^3+o(1)\\
&\leq \frac{-1}{54S^2_{KP}}\bigg(\int_{\Ra^2} (\partial_1 \theta_p)^2\bigg)^3+o(1).
\end{align*}
This ends the proof of lemma \ref{16}. 
\end{proof}
We are now able to prove proposition  \ref{proposition7.2}.
\begin{proof}[Proof of proposition \ref{proposition7.2} ]
Using \eqref{7.1} and \eqref{meilleurconvergence} we have
\begin{equation}
    \liminf_{n\rightarrow \infty} \int_{\Ra^2} (\partial_1\theta_{p_n})^2 \geq \int_{\mathbb{R}^2} N_0^2,
\end{equation}
thus, up to extraction, 
\begin{equation}
\label{7.13}
    \lim_{n\rightarrow \infty} \int_{\Ra^2} (\partial_1\theta_{p_n})^2 \rightarrow \mu_0, \; \mbox{when}\; n \rightarrow \infty,
\end{equation}
where
\begin{equation}
    \mu_0 \geq \int_{\mathbb{R}^2} N_0^2 >0.
\end{equation}
Combining lemma \ref{16}, \eqref{7.13} and lemma \ref{lemme 2.1} we obtain \begin{equation}
    E_{KP}(\partial_1 \theta_{p_n}) \rightarrow \mathcal{E}^{KP}_{min}(\mu_0), \quad {\rm and} \quad \int_{\Ra^2}|\partial_1 \theta_{p_n}|^2 \rightarrow \mu_0>0, \,{\rm when} \, n\rightarrow \infty.
\end{equation}
\end{proof}
We can now obtain global convergence thanks to proposition \ref{compacite solution KP}.
\begin{proposition}\label{convergence L 2}
Let $(p_n)_{n \in \mathbb{N}}$ a sequence which converges to 0. Up to an extraction, there exists a ground state $N_0$ such that 
\begin{equation}
    \partial_1 \theta_{p_n} \rightarrow N_0 \;  in \; Y(\mathbb{R}^2), \quad \mbox{and} \quad N_{p_n} \rightarrow N_0 \; in \; L^2(\mathbb{R}^2),\, \mbox{when} \;n \rightarrow \infty.
\end{equation}
\end{proposition}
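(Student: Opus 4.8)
The plan is to apply the compactness result for almost-minimizing sequences, Proposition \ref{compacite solution KP}, to the sequence $\omega_n := \partial_1\theta_{p_n}$, and then to remove the translations it produces by exploiting the a priori normalization \eqref{minimal}.

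First I would check that $\partial_1\theta_{p_n}$ lies in $Y(\mathbb{R}^2)$ with norm bounded uniformly in $n$. Indeed $\|\partial_1\theta_p\|_{Y}^2 = \|\nabla\theta_p\|_{L^2}^2 + \|\partial_1^2\theta_p\|_{L^2}^2$; the bounds $\|\partial_1\theta_p\|_{L^2}, \|\partial_1^2\theta_p\|_{L^2} \le C$ come from Theorem \ref{bornes sobolev}, while $\|\partial_2\theta_p\|_{L^2}\le C$ follows from the Pohozaev estimate \eqref{coool}. Combined with Proposition \ref{proposition7.2}, which states precisely that $E_{KP}(\partial_1\theta_{p_n}) \to \mathcal{E}^{KP}(\mu_0)$ and $\int_{\mathbb{R}^2}|\partial_1\theta_{p_n}|^2 \to \mu_0 > 0$, this shows that $(\partial_1\theta_{p_n})_n$ is an almost-minimizing sequence in the sense of \eqref{presque sequences minimisante}. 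Proposition \ref{compacite solution KP} then supplies translations $(a_n)$, a ground state $\widetilde{N}$ of speed $\sigma = \mu_0^2/(\mu^*)^2$, and a subsequence along which $\partial_1\theta_{p_n}(\cdot - a_n) \to \widetilde{N}$ in $Y(\mathbb{R}^2)$, hence in $L^2(\mathbb{R}^2)$.

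The crux is to show that the translations $a_n$ stay bounded. Recall from \eqref{meilleurconvergence} (already \eqref{Ntheta} suffices) that $\|N_{p_n} - \partial_1\theta_{p_n}\|_{L^2} \to 0$, so $N_{p_n}(\cdot - a_n) \to \widetilde{N}$ in $L^2$ as well; in particular the $L^2$-mass of $N_{p_n}$ concentrates in balls $B(a_n,R)$: for each $\delta > 0$ there is $R$ with $\int_{|x - a_n| > R} N_{p_n}^2 < \delta$ for $n$ large. Suppose, for contradiction, that $|a_n| \to \infty$ along a subsequence. Then any fixed ball $B(0,R')$ is eventually contained in $\{|x - a_n| > R\}$, so $\int_{B(0,R')} N_{p_n}^2 < \delta$ for $n$ large; but the local strong convergence \eqref{7.2} of Proposition \ref{convergencefortelocal} gives $\int_{B(0,R')} N_{p_n}^2 \to \int_{B(0,R')} N_0^2$, whence $\int_{B(0,R')} N_0^2 \le \delta$ for every $\delta$ and every $R'$, i.e. $N_0 \equiv 0$. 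This contradicts $N_0(0) \ge C > 0$, which follows from the normalization \eqref{minimal} together with the uniform local convergence. Therefore $(a_n)$ is bounded.

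Finally, extracting once more so that $a_n \to a$, and using that translations are continuous on $L^2$ and isometries of $Y(\mathbb{R}^2)$ (the $Y$-norm involves only $\nabla$ and $\partial_1^2$, which commute with translations), I would pass to the limit to get $\partial_1\theta_{p_n} \to \widetilde{N}(\cdot + a)$ in $Y(\mathbb{R}^2)$. Since $\partial_1\theta_{p_n} \rightharpoonup N_0$ weakly in $L^2$ by Proposition \ref{limitecommune}, the two limits coincide, so $N_0 = \widetilde{N}(\cdot + a)$ is itself a ground state and $\partial_1\theta_{p_n} \to N_0$ in $Y(\mathbb{R}^2)$; the convergence $N_{p_n} \to N_0$ in $L^2(\mathbb{R}^2)$ then follows at once from $\|N_{p_n} - \partial_1\theta_{p_n}\|_{L^2} \to 0$. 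I expect the bounded-translations step to be the main obstacle, and it is exactly there that the lower bound \eqref{minimal} on $N_p(0)$ is indispensable, as in \cite{BGS2}.
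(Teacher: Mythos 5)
Your proposal is correct and follows essentially the same route as the paper: apply Proposition \ref{proposition7.2} and the compactness result of Proposition \ref{compacite solution KP} to $\partial_1\theta_{p_n}$, transfer the convergence to $N_{p_n}$ via \eqref{meilleurconvergence}, rule out $|a_n|\to\infty$ by contradiction using \eqref{minimal} and the local convergence \eqref{7.2}, and conclude by continuity of translations. The only (cosmetic) difference is in the contradiction step: you show the local limit $N_0$ would vanish identically, contradicting $N_0(0)\geq C$, whereas the paper keeps the mass bound $\int_{B(0,1)}N_{p_n}^2>2C$ and contradicts the decay of the translates $N_0(\cdot+a_n)$ in $L^2$.
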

\begin{proof}
Using proposition \ref{proposition7.2} there exists a  constant $\mu_0$, such that, up to a subsequence,
\begin{equation}
    E_{KP}(\partial_1 \theta_{p_n}) \rightarrow \mathcal{E}^{KP}_{min}(\mu_0), \quad {\rm and} \quad \int_{\Ra^2}|\partial_1 \theta_{p_n}|^2 \rightarrow \mu_0>0, \,{\rm when} \, n\rightarrow \infty.
\end{equation}
Then using proposition \ref{compacite solution KP}, there exists $(a_n)_{n \in \mathbb{N}}$ and a ground state $\overset{\sim}{N_0}$  with speed $\sigma=\frac{\mu_0^2}{(\mu^*)^2} $, such that
\begin{equation}
    \partial_1 \theta_{p_n}(.-a_n) \underset{n \rightarrow \infty}{\longrightarrow} \overset{\sim}{N_0} \, \text{in} \, Y(\Ra^2) .
\end{equation}
So by \eqref{meilleurconvergence} we have
\begin{equation}
\label{7.14}
    N_{p_n}(.-a_n) \underset{n \rightarrow \infty}{\longrightarrow} \overset{\sim}{N_0} \; {\rm in} \; L^2(\mathbb{R}^2).
\end{equation}
Using proposition \ref{convergencefortelocal} we have 
$$\forall 1<q<\infty,\, \forall k \in\mathbb{N}, \, N_{p_n}(.-a_n) \rightharpoonup N_0 \,\; \text{in} \, \, W^{k,q}\big(\mathbb{R}^2\big), \, \text{when}\; n \rightarrow \infty,
$$
with  $N_0$ a solution of \eqref{SW}. Thus by \eqref{7.14} $N_0=\overset{\sim}{N_0}$, and $\overset{\sim}{N_0}$ is a ground state of speed 1.
We will now prove the convergence of $N_{p_n}$ and $\partial_1 \theta_{p_n}$. Using the continuity of the translation in $L^q(\Ra^2)$ for any $1 \leq q<\infty$, if $a_n\rightarrow a$ then
$$
  \partial_1 \theta_{p_n}(.-a) \underset{n \rightarrow \infty}{\longrightarrow} N_0 \;  {\rm in} \; Y(\mathbb{R}^2), \quad {\rm and} \quad N_{p_n}(.-a) \underset{n \rightarrow \infty}{\longrightarrow} N_0 \; {\rm in} \; L^2(\mathbb{R}^2).
$$
Thus, it is sufficient to prove that $(a_n)_{n \in \mathbb{N}}$ is bounded. By contradiction assume, up to an extraction, that $(a_n)_{n \in \mathbb{N}}$ satisfies
$$a_n \underset{n \rightarrow \infty}{\longrightarrow} \infty  .
$$
Then combining \eqref{7.2} and \eqref{proposition 3.3} there exists $C>0$ not depending on $n$ such that
\begin{equation}
    \int_{B(0,1)}N_{p_n}^2 > 2C,
\end{equation}
thus by \eqref{7.14} we have 
\begin{equation}
    \int_{B(0,1)} |N_0(x+a_n)-N_{p_n}(x)|^2dx \underset{n \rightarrow \infty}{\longrightarrow} 0 .
\end{equation}
Then for  $n$ large enough 
\begin{equation}
    \int_{B(0,1)} |N_0(x+a_n)|^2dx \geq C,
\end{equation}
which is absurd  since for any $f \in L^2$ we have
\begin{equation}
    \int_{B(0,1)} |f(x+a_n)|^2dx \underset{n \rightarrow \infty}{\longrightarrow} 0,
\end{equation}
this concludes the proof of proposition \ref{convergence L 2}.
\end{proof}
\begin{proof}[Proof of theorem \ref{thm de l'article}] Combining proposition \ref{convergence L 2} ($L^2$ convergence), theorem \ref{bornes sobolev} (boundedness in $W^{k,q}$) and an interpolation argument we have
\begin{equation}
    N_{p_n} \underset{n \rightarrow \infty}{\longrightarrow} N_0 \; \textrm{in} \; W^{k,q}(\mathbb{R}^2).
\end{equation}
Since the embedding $Y(\Ra^2) \hookrightarrow L^2(\Ra^2)$ is continuous the same argument gives
\begin{equation}
    \partial_1 \theta_{p_n} \underset{n \rightarrow \infty}{\longrightarrow} N_0 \; \text{in} \; W^{k,q}(\mathbb{R}^2).
\end{equation}
\end{proof}
\appendix

\section{Soliton in the one dimensional case}
\label{appendix diml 1}
We begin with some reminders about nonlinear Schrödinger equations in dimension 1. Traveling wave solutions to \eqref{TWc} are related to the soliton of the generic Korteweg-de Vries equation (see \cite{BGS3}). For traveling waves solution to \eqref{NLS} the transonic limit can lead to solitons of the modified \eqref{KdV} equation or even the generalized (KdV) (see \cite{Chiron}). We show in this section that traveling waves of \eqref{E-K} exhibit similar properties. 
\newline
The existence of traveling waves in dimension one for the Euler-Korteweg equation follows from basic ode arguments that we sketch here.
The Euler-Korteweg system, in dimension one, reads
\begin{equation}
\tag{E-K}
  \left\{
      \begin{aligned}
        &-c\rho'+(\rho u)'=0,\,\, \quad  \qquad \qquad \quad \quad \qquad \quad \quad  (1) \\
        &\ -cu'+ \bigg( \frac{u^2}{2} \bigg)'+g'= \bigg( K(\rho) \rho''+\frac{1}{2}K'(\rho)\rho'^2 \bigg)' \quad (2), \quad x\in \mathbb{R}.
      \end{aligned}
    \right.
\end{equation}
We assume
\begin{equation}
    g(1)=0,\;g'(1)=1 \;, \;  \Gamma=g''(1)+3 \ne 0,\; \gamma=\frac{1}{g''(1)+3}, \; c \in ]0,1[.
\end{equation}
We will study solitons whose limits are
\begin{equation}
    \rho_{\pm \infty}=1;\rho'_{\pm\infty}=0 ,\rho''_{\pm\infty}=0, u_{\pm\infty}=0;u'_{\pm\infty}=0 ,u''_{\pm\infty}=0.
\end{equation}
As in dimension two, we let 
\begin{equation}
    \varepsilon=\sqrt{1-c^2}.
\end{equation}
By integrating (1) on $[x,+\infty[$ we obtain
\begin{equation}
    -c\rho+\rho u =-c,
\end{equation}
so \begin{equation}
\label{substitutuion}
    u=\frac{c(\rho-1)}{\rho}.
\end{equation}
Using \eqref{substitutuion} and (2), we have, by integration on $[x,+\infty[$, 
\begin{equation}
\label{**}
    -c^2 \frac{(\rho-1)}{\rho}+ \frac{c^2(\rho-1)^2}{2\rho^2}+g(\rho)=K(\rho) \rho'' +\frac{1}{2}K'(\rho)\rho'^2.
\end{equation}
Multiplying by $\rho'$ and integrating on $[x,+\infty[$, we have
\begin{equation}
\label{***}
     \frac{1}{2}K(\rho)(\rho')^2=\frac{-c^2}{2\rho }(\rho-1)^2+G(\rho).
\end{equation}
where $G$ is a primitive of $g$ such that $G(1)=0$.
\paragraph{ The case $\Gamma \neq 0$}
We define the function
\begin{equation}
    \frac{-c^2}{2\rho }(\rho-1)^2+G(\rho)=F_{\varepsilon}(\rho),
\end{equation}
and
\begin{equation}
\label{rhom}
\begin{split}
\rho_{m,\varepsilon}=\left\{\begin{array}{cc}
       \sup \left\{ \rho<1, F_{\varepsilon}(\rho)=0 \right\}, \; \text{if} \; \gamma >0.\\
       \inf \left\{ \rho>1, F_{\varepsilon}(\rho)=0 \right\}, \; \text{if} \; \gamma <0.
\end{array}
\right.
\end{split}
\end{equation}
Then we have:
\begin{lemme}\label{convergence rho m}
For $\varepsilon\ll 1$, we have
\begin{equation}
\label{mais ou est donc rho m}
     1-(3\gamma\varepsilon^2+|\gamma| \varepsilon^3)\leq \rho_{m,\varepsilon} \leq 1-(3\gamma\varepsilon^2 - |\gamma|\varepsilon^3).
\end{equation}
\end{lemme}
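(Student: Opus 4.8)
The plan is to characterize $\rho_{m,\varepsilon}$ as the zero of $F_\varepsilon$ nearest to $\rho=1$ obtained after factoring out the double root that $F_\varepsilon$ has at $1$, and to localize this zero by the intermediate value theorem together with explicit sign computations at the two candidate endpoints $1-(3\gamma\varepsilon^2\pm|\gamma|\varepsilon^3)$. The two cases in \eqref{rhom} (supremum of the zeros below $1$ when $\gamma>0$, infimum of the zeros above $1$ when $\gamma<0$) are handled simultaneously, since they differ only through the sign of the leading correction $-3\gamma\varepsilon^2$.

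First I would Taylor expand $F_\varepsilon(1+s)$ about $s=0$. Using $g(1)=0$ and $g'(1)=1$, hence $G(1)=G'(1)=0$ and $G''(1)=1$, together with $c^2=1-\varepsilon^2$, the $s^2$ contributions of $-\frac{c^2}{2\rho}(\rho-1)^2$ and of $G$ cancel except for the capillary correction, yielding
\begin{equation*}
F_\varepsilon(1+s)=\frac{\varepsilon^2}{2}s^2+\frac{\Gamma}{6}s^3+O\big(\varepsilon^2 s^3+s^4\big),
\end{equation*}
the cubic coefficient being exactly $\frac12+\frac{g''(1)}{6}=\frac{\Gamma}{6}$. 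Since $F_\varepsilon(1)=F_\varepsilon'(1)=0$, I set $H(s)=F_\varepsilon(1+s)/s^2$ for $s\neq0$ and write
\begin{equation*}
H(s)=\frac{\varepsilon^2}{2}+\frac{\Gamma}{6}s+R(s,\varepsilon),\qquad |R(s,\varepsilon)|\le C\varepsilon^4\ \text{ for }\ |s|\le 4|\gamma|\varepsilon^2,
\end{equation*}
the uniform remainder bound following from the smoothness of $g$ and $K$ on a fixed neighbourhood of $1$.

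Then I would test $H$ at $s_\pm=-3\gamma\varepsilon^2\pm\gamma\varepsilon^3$ and set $\rho_\pm:=1+s_\pm$. Using $\Gamma\gamma=1$, the term $\frac{\Gamma}{6}(-3\gamma\varepsilon^2)$ cancels $\frac{\varepsilon^2}{2}$ exactly, so that
\begin{equation*}
H(s_\pm)=\pm\frac{\varepsilon^3}{6}+O(\varepsilon^4),
\end{equation*}
which has a definite sign for $\varepsilon$ small. As $H$ and $s\mapsto F_\varepsilon(1+s)$ share the same sign for $s\neq0$, the intermediate value theorem gives a zero of $F_\varepsilon$ in the interval with endpoints $\rho_+$ and $\rho_-$, which is precisely $[\,1-(3\gamma\varepsilon^2+|\gamma|\varepsilon^3),\,1-(3\gamma\varepsilon^2-|\gamma|\varepsilon^3)\,]$. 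To identify this zero with $\rho_{m,\varepsilon}$, I note that $s_+$ is the endpoint closer to $s=0$ and that $H(s_+)>0$; since the affine part $\frac{\varepsilon^2}{2}+\frac{\Gamma}{6}s$ is monotone, its minimum over the segment joining $s_+$ to $0$ is attained at $s_+$ and equals $\frac{\varepsilon^3}{6}$, whence $H>0$ there and $F_\varepsilon$ has no zero strictly between $\rho_+$ and $1$. This forces the extremal zero (largest below $1$ if $\gamma>0$, smallest above $1$ if $\gamma<0$) to lie in the stated interval.

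This is at heart a perturbed root-finding problem, so no deep obstacle arises; the delicate point is purely a matter of bookkeeping. What must be checked carefully is the exact cancellation producing $H(s_\pm)=\pm\frac{\varepsilon^3}{6}+O(\varepsilon^4)$: it rests simultaneously on $G''(1)=1$ (which annihilates the $s^2$ term at $\varepsilon=0$), on $c^2=1-\varepsilon^2$ (which supplies the $\frac{\varepsilon^2}{2}s^2$ term), and on $\Gamma\gamma=1$, so that the residual $\pm\frac{\varepsilon^3}{6}$ genuinely dominates the $O(\varepsilon^4)$ remainder and delivers the sharp two-sided bound.
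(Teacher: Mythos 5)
Your proof is correct and takes essentially the same route as the paper: a Taylor expansion of $F_\varepsilon$ at $\rho=1$ exploiting the cancellations $G''(1)=1$, $c^2=1-\varepsilon^2$ and $\Gamma\gamma=1$, followed by the intermediate value theorem applied at the two endpoints $1-3\gamma\varepsilon^2\mp|\gamma|\varepsilon^3$. You additionally make explicit the identification step (no zero of $F_\varepsilon$ strictly between $1+s_+$ and $1$, so the extremal zero $\rho_{m,\varepsilon}$ indeed lies in the stated interval), which the paper's one-line proof leaves implicit; only a cosmetic slip remains, namely calling the $\frac{\varepsilon^2}{2}s^2$ term a ``capillary correction'' when $K$ never enters $F_\varepsilon$ and, as you yourself note, this term comes from $c^2=1-\varepsilon^2$.
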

\begin{proof}
The lemma is a direct consequence of the intermediate value theorem   and the fact that 
\begin{align*}
    F_{\varepsilon}(1-\gamma \alpha \varepsilon^2)=\varepsilon^6\alpha^2\gamma^2(1-\frac{\alpha}{3}+\mathcal{O}(\varepsilon^2 \alpha)).
\end{align*}
\end{proof}
\begin{rem} For $\varepsilon\ll 1$,\newline
\begin{itemize}
    \item 
If $\gamma>0$, \,
$0<\rho_{m,\varepsilon} <1$ , $F_{\varepsilon}(\rho_{m,\varepsilon})=0$ {\rm and} $F_{\varepsilon}(\rho)>0  \quad {\rm for} \quad  \rho \in  ]\rho_{m,\varepsilon},1[.$
\end{itemize}
\begin{itemize}
    \item 
    If $\gamma<0$, \,
$1<\rho_{m,\varepsilon}<2$ ,  $F_{\varepsilon}(\rho_{m,\varepsilon})=0$ 
{\rm and} $F_{\varepsilon}(\rho)>0  \quad {\rm for} \quad  \rho \in  ]1,\rho_{m,\varepsilon}[.$
\end{itemize}
\end{rem} 
To continue our reasoning, we need informations on the derivative.
$$
\gamma F_{\varepsilon}'(\rho_{m,\varepsilon})>0, \quad  (\text{if} \; 1-c\ll 1).
$$
Indeed, we will prove, if $ \gamma >0$, that the soliton decreases from 1 to $ \rho_{m,\varepsilon}$ between $]- \infty,0]$ then increases between $[0, \infty[$. If $ \gamma <0$,  the soliton increases from 1 to $ \rho_{m,\varepsilon}$ between $]- \infty,0]$ then decreases between $[0, \infty[$.
But if $F_{\varepsilon}'( \rho_{m,\varepsilon})=0$ then the Cauchy solution of \eqref{**}  such that $ \rho(0)= \rho_{m,\varepsilon}$,\,$ \rho'(0)=0$ is stationary.
  And in this case there is no soliton that converges to 1.
\begin{lemme}
For $\varepsilon\ll 1$
\begin{equation}
\label{ce n'etait pas une hypothèse :0 }
   \gamma F_{\varepsilon}'(\rho_{m,\varepsilon}) >0.
\end{equation}
\end{lemme}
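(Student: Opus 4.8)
The plan is to compute $F_{\varepsilon}'$ explicitly and then evaluate it at $\rho_{m,\varepsilon}$, using the sharp localization of $\rho_{m,\varepsilon}$ furnished by Lemma \ref{convergence rho m}. First I would differentiate the definition of $F_{\varepsilon}$. Since $G'=g$ and $\frac{d}{d\rho}\big(\frac{(\rho-1)^2}{2\rho}\big)=\frac{\rho^2-1}{2\rho^2}$, one obtains
$$
F_{\varepsilon}'(\rho)=-c^2\,\frac{\rho^2-1}{2\rho^2}+g(\rho),\qquad c^2=1-\varepsilon^2,
$$
so the whole question reduces to a Taylor expansion of this expression near the constant state $\rho=1$.

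Next I would set $\rho=1+s$ with $s=O(\varepsilon^2)$ and expand both terms to second order in $s$. Using the expansion $\frac{\rho^2-1}{2\rho^2}=s-\frac{3}{2}s^2+O(s^3)$ together with $g(1)=0$, $g'(1)=1$ and the relation $3+g''(1)=1/\gamma$, the first order contributions in $s$ cancel against the $-c^2$ factor and I expect
$$
F_{\varepsilon}'(1+s)=\varepsilon^2 s+\frac{s^2}{2\gamma}+O(\varepsilon^2 s^2)+O(s^3).
$$
The key structural point is that, once $s$ is of order $\varepsilon^2$, the two surviving leading terms $\varepsilon^2 s$ and $\frac{s^2}{2\gamma}$ are both of size $\varepsilon^4$ and thus of competing magnitude.

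Then I would insert the value $s=\rho_{m,\varepsilon}-1=-3\gamma\varepsilon^2+O(\varepsilon^3)$ provided by Lemma \ref{convergence rho m}. This yields $\varepsilon^2 s=-3\gamma\varepsilon^4+O(\varepsilon^5)$ and $\frac{s^2}{2\gamma}=\frac{9}{2}\gamma\varepsilon^4+O(\varepsilon^5)$, while the remaining terms $O(\varepsilon^2 s^2)$ and $O(s^3)$ are $O(\varepsilon^6)$. Summing, $F_{\varepsilon}'(\rho_{m,\varepsilon})=\frac{3}{2}\gamma\varepsilon^4+O(\varepsilon^5)$, and consequently $\gamma F_{\varepsilon}'(\rho_{m,\varepsilon})=\frac{3}{2}\gamma^2\varepsilon^4+O(\varepsilon^5)>0$ for $\varepsilon\ll1$, which is exactly the claim.

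The main obstacle is precisely this cancellation: the coefficients $-3\gamma$ and $+\frac{9}{2}\gamma$ of the two $\varepsilon^4$-terms leave only $\frac{3}{2}\gamma$, so the sign is decided at fourth order rather than at the naive leading order. To guarantee that the remainder does not overwhelm this surviving term, I need $s=\rho_{m,\varepsilon}-1$ known to accuracy $o(\varepsilon^2)$, so that the error in $\varepsilon^2 s$ is genuinely $o(\varepsilon^4)$; the $O(\varepsilon^3)$ control of $\rho_{m,\varepsilon}$ in Lemma \ref{convergence rho m} is exactly what makes the argument rigorous, and no further input is required.
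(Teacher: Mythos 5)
Your proof is correct and follows exactly the route the paper intends: the paper's own proof is simply the one-line assertion that the lemma is a ``consequence of Lemma \ref{convergence rho m} and elementary computation,'' and your argument is precisely that computation carried out in full, namely expanding $F_{\varepsilon}'(1+s)=\varepsilon^2 s+\frac{s^2}{2\gamma}+O(\varepsilon^2s^2)+O(s^3)$ and substituting $s=-3\gamma\varepsilon^2+O(\varepsilon^3)$ to obtain $\gamma F_{\varepsilon}'(\rho_{m,\varepsilon})=\frac{3}{2}\gamma^2\varepsilon^4+O(\varepsilon^5)>0$. You also correctly identify why the $O(\varepsilon^3)$ precision in Lemma \ref{convergence rho m} is needed, since the sign only emerges at order $\varepsilon^4$ after the cancellation of the linear terms.
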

\begin{proof}
Consequence of lemma \ref{convergence rho m} and elementary computation.
\end{proof}
We can now conclude
\begin{proposition}\label{existence}
Under the conditions $$
g(1) = 0, \;g'(1) = 1,\; \Gamma \ne 0 . $$
For $\varepsilon\ll 1$,
there exists $\rho$ solution of \eqref{**} with $$\rho(0)=\rho_{m,\varepsilon}, \;\rho'(0)=0,$$ and $$u=\frac{c(\rho-1)}{\rho}.$$ 
Then $(\rho,u)$ is a global solution of
\begin{equation}
\tag{E-K}
  \left\{
      \begin{aligned}
        &-c\rho'+(\rho u)'=0, \qquad \qquad \quad \quad \qquad \quad \quad  (1) \\
        &\-cu'+ \bigg( \frac{u^2}{2} \bigg)'+g'= \bigg( K(\rho) \rho''+\frac{1}{2}K'(\rho)\rho'^2 \bigg)' \quad (2), \quad x\in \mathbb{R},
      \end{aligned}
    \right.
\end{equation}
such that
$$
    \rho_{\pm\infty}=1;\rho'_{\pm\infty}=0 ,\rho''_{\pm\infty}=0, u_{\pm\infty}=0;u'_{\pm\infty}=0 ,u''_{\pm\infty}=0.
$$
Moreover 
if $\gamma>0$
\begin{itemize}
    \item 
$\rho$ is increasing on $[0,+\infty[$ and even.
\end{itemize}
If $\gamma<0$
\begin{itemize}
    \item 
$\rho$ is decreasing on $[0,+\infty[$ and even.
\end{itemize}
\end{proposition}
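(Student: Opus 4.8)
The plan is to read \eqref{**} as an autonomous second order ODE and to run a phase‑plane (mechanical energy) analysis whose first integral is exactly \eqref{***}. A direct computation shows that the left‑hand side of \eqref{**} is precisely $F_\varepsilon'(\rho)$, so \eqref{**} is equivalent to
\[
\rho''=\frac{1}{K(\rho)}\left(F_\varepsilon'(\rho)-\frac{1}{2}K'(\rho)(\rho')^2\right).
\]
Since $K>0$ and $g,K$ are smooth, the right‑hand side is smooth, and the Cauchy–Lipschitz theorem yields a unique local solution with $\rho(0)=\rho_{m,\varepsilon}$, $\rho'(0)=0$. Differentiating $\tfrac12 K(\rho)(\rho')^2-F_\varepsilon(\rho)$ and substituting \eqref{**} shows this quantity is constant along the flow; evaluating at $x=0$, where $\rho'(0)=0$ and $F_\varepsilon(\rho_{m,\varepsilon})=0$ by the definition \eqref{rhom}, shows the constant vanishes, so the solution satisfies \eqref{***} throughout its interval of existence.

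Next I would extract the qualitative profile, treating the case $\gamma>0$ (the case $\gamma<0$ being symmetric). Evaluating \eqref{**} at $x=0$ gives $K(\rho_{m,\varepsilon})\rho''(0)=F_\varepsilon'(\rho_{m,\varepsilon})$, and the lemma asserting $\gamma F_\varepsilon'(\rho_{m,\varepsilon})>0$ forces $\rho''(0)>0$; hence $x=0$ is a strict local minimum and $\rho'>0$ for small $x>0$. By \eqref{***} we have $(\rho')^2=2F_\varepsilon(\rho)/K(\rho)$, which is strictly positive on $(\rho_{m,\varepsilon},1)$ by the remark following \eqref{rhom}, so $\rho'$ cannot vanish while $\rho$ stays in that open interval; therefore $\rho$ is strictly increasing as long as $\rho_{m,\varepsilon}<\rho<1$. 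In particular $\rho$ stays confined to the compact set $[\rho_{m,\varepsilon},1]$ and $(\rho')^2$ remains bounded, which excludes blow‑up and yields global existence.

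To obtain the behaviour at infinity I would first rule out $\rho$ reaching the value $1$ at a finite $x_0$: at such a point \eqref{***} would force $\rho'(x_0)=0$ since $F_\varepsilon(1)=0$, and because $F_\varepsilon'(1)=0$ the data $(1,0)$ generate the stationary solution $\rho\equiv 1$, so uniqueness would give $\rho\equiv 1$, a contradiction. Thus $\rho$ increases strictly on $[0,\infty)$ to a finite limit $\ell\le 1$; this limit must annihilate $F_\varepsilon$ (otherwise $\rho'$ would stay bounded below by a positive constant and $\rho$ could not converge), and since $F_\varepsilon>0$ on $(\rho_{m,\varepsilon},1)$ the only possibility is $\ell=1$. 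Then \eqref{***} and \eqref{**} force $\rho'\to 0$ and $\rho''\to 0$. Evenness is a consequence of uniqueness: $x\mapsto\rho(-x)$ solves \eqref{**} with the same data, so $\rho(-x)=\rho(x)$, whence $\rho$ decreases on $(-\infty,0]$ and also tends to $1$ at $-\infty$, giving the homoclinic soliton. The case $\gamma<0$ is identical on the interval $[1,\rho_{m,\varepsilon}]$ with the reversed monotonicity, as provided by Lemma \ref{convergence rho m}.

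Finally, setting $u=c(\rho-1)/\rho$, equation $(1)$ of \eqref{ekk} holds by the direct computation $-c\rho'+(\rho u)'=-c\rho'+c\rho'=0$, and equation $(2)$ is recovered by differentiating \eqref{**} after this substitution, every boundary term at infinity vanishing thanks to the limits just established. I expect the main obstacle to lie not in the local ODE theory but in the global qualitative analysis: the decisive ingredient is the lemma $\gamma F_\varepsilon'(\rho_{m,\varepsilon})>0$, which certifies that $\rho_{m,\varepsilon}$ is a nondegenerate turning point rather than an equilibrium — were $F_\varepsilon'(\rho_{m,\varepsilon})$ to vanish, the Cauchy solution starting there would be stationary and no soliton would exist — together with the uniqueness argument preventing the orbit from reaching the equilibrium $\rho=1$ at finite distance.
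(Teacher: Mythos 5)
Your proof is correct and takes essentially the same route as the paper: derive the first integral \eqref{***}, use the lemma $\gamma F_{\varepsilon}'(\rho_{m,\varepsilon})>0$ to get the sign of $\rho''(0)$, and then conclude by phase-plane/uniqueness arguments. The paper compresses the qualitative part into the phrase ``basic ode argument and phase portrait analysis,'' and your confinement, monotone-convergence and evenness-by-uniqueness steps are exactly a rigorous rendering of that sketch.
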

\begin{proof}
Multiplying the equation \eqref{**} by $\rho'$ and integrating on $[0,x]$ we obtain 
\begin{equation}
\label{****}
    \frac{1}{2}K(\rho)(\rho')^2=\frac{-c^2}{2\rho }(\rho-1)^2+G(\rho)=F(\rho).
\end{equation}
We have, by \eqref{ce n'etait pas une hypothèse :0 }, $\rho''(0)=F'(\rho_m) $  has the same sign as $ \gamma$.
So near 0, if $\gamma >0$ $\rho'$ is increasing. The rest of the proof is deduced using basic ode argument and phase portrait analysis (see figure \ref{figure1}, \ref{figure2}).
\end{proof}
We let, as in the two-dimensional case 
\begin{equation}
\label{r definition}
    \rho-1=-\varepsilon^2 \gamma r_{\varepsilon}\bigg( \frac{\varepsilon x}{\sqrt{K(1)}} \bigg).
\end{equation}
Then we have
\begin{proposition}\label{prop convergence rho }
Let
\begin{equation}
    N(x)=\frac{3}{ch^2(\frac{x}{2})},
\end{equation}
the classical soliton to the Korteweg-de-Vries equation \eqref{Kdv}.
Then 
\begin{equation}
\label{convergence r uiformément global}
    r_{\varepsilon} \underset{\varepsilon \rightarrow 0}{\longrightarrow} N , \, {\rm in} \; C^0(\mathbb{R}).
\end{equation}
\end{proposition}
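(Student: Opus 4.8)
The plan is to substitute the anisotropic rescaling \eqref{r definition} into the first integral \eqref{****} and to recognise the resulting equation as an $\varepsilon^2$-perturbation of the profile equation for the \eqref{Kdv} soliton. Writing $\eta=\rho-1=-\varepsilon^2\gamma r_\varepsilon(z)$ with $z=\varepsilon x/\sqrt{K(1)}$, a Taylor expansion of $F_\varepsilon$ around $\rho=1$ (using $g(1)=0$, $g'(1)=1$, hence $G''(1)=1$, $G'''(1)=g''(1)$, together with $c^2=1-\varepsilon^2$) shows that the leading contribution is of order $\varepsilon^6$ and that, after dividing by $\tfrac12\gamma^2\varepsilon^6$, equation \eqref{****} becomes
\[
\frac{K(1-\varepsilon^2\gamma r_\varepsilon)}{K(1)}\,(r_\varepsilon')^2 = r_\varepsilon^2-\tfrac13 r_\varepsilon^3+\varepsilon^2\Phi_\varepsilon(r_\varepsilon),
\]
where $\Phi_\varepsilon$ is smooth and bounded, uniformly in $\varepsilon$, on bounded ranges of $r_\varepsilon$, with $\Phi_\varepsilon(0)=\Phi_\varepsilon'(0)=0$ (because $F_\varepsilon$ has a double zero at $\rho=1$). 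Differentiating in $z$, equivalently rescaling the second-order equation \eqref{**}, yields $r_\varepsilon''=r_\varepsilon-\tfrac12 r_\varepsilon^2+O(\varepsilon^2)$, a regular perturbation of the profile equation $N''=N-\tfrac12N^2$ solved by $N(x)=3/\cosh^2(x/2)$; note that the $\varepsilon=0$ limit of the displayed identity is precisely the first integral $(N')^2=N^2-\tfrac13 N^3$.

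Next I would record the data at the origin. By construction $r_\varepsilon(0)=\dfrac{1-\rho_{m,\varepsilon}}{\gamma\varepsilon^2}$ and $r_\varepsilon'(0)=0$, and Lemma \ref{convergence rho m} gives $r_\varepsilon(0)=3+O(\varepsilon)\to 3=N(0)$, while $N'(0)=0$. Moreover, Proposition \ref{existence} confines $\rho$ monotonically between $\rho_{m,\varepsilon}$ and $1$, so $\|\rho-1\|_{\infty}\le|\rho_{m,\varepsilon}-1|\le C\varepsilon^2$ and hence $r_\varepsilon$ is bounded uniformly in $\varepsilon$; the first integral then bounds $r_\varepsilon'$, and a bootstrap through the second-order equation bounds all derivatives uniformly on $\mathbb{R}$. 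By the Arzel\`a--Ascoli theorem the family $(r_\varepsilon)$ is precompact in $C^k_{loc}(\mathbb{R})$ for every $k$.

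I would then pass to the limit: any accumulation point $\widetilde N$ is even and solves the regular second-order equation $\widetilde N''=\widetilde N-\tfrac12\widetilde N^2$ with $\widetilde N(0)=3$, $\widetilde N'(0)=0$, whence $\widetilde N=N$ by Cauchy--Lipschitz uniqueness. Since every subsequential limit equals $N$, precompactness forces the full family to converge, $r_\varepsilon\to N$ in $C^k_{loc}(\mathbb{R})$, in particular locally uniformly. I emphasise that uniqueness must be read off the second-order equation rather than from \eqref{****}: the first integral degenerates at the turning value $r=3$, where the square-root right-hand side is not Lipschitz, so the separable form alone does not single out the soliton.

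The final and main difficulty is to upgrade local uniform convergence to convergence in $C^0(\mathbb{R})$, that is, to control the tails uniformly in $\varepsilon$. Here I would exploit the first integral near the fixed point $r=0$: since $\Phi_\varepsilon$ vanishes to second order there, $(r_\varepsilon')^2=r_\varepsilon^2\,(1+o(1))$ as $r_\varepsilon\to 0$, and a comparison argument shows that once $r_\varepsilon(z_0)\le\delta$ one has $r_\varepsilon(z)\le r_\varepsilon(z_0)\,e^{-(z-z_0)/2}$ for $z\ge z_0$, with a rate independent of $\varepsilon$. Because $N$ decays and $r_\varepsilon\to N$ on $[-L,L]$, for any $\delta>0$ the threshold $z_0=L$ can be taken uniform in $\varepsilon$, making $\sup_{|z|\ge L}(r_\varepsilon+N)$ arbitrarily small; combining this tail bound with the local uniform convergence on $[-L,L]$ and the evenness of $r_\varepsilon$ and $N$ yields $\|r_\varepsilon-N\|_{C^0(\mathbb{R})}\to 0$, which is the claimed statement \eqref{convergence r uiformément global}.
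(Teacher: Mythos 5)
Your proof is correct and follows essentially the same route as the paper: rescale the travelling-wave equation into a regular $\varepsilon^2$-perturbation of the KdV profile equation $N''=N-\tfrac12 N^2$, use $r_\varepsilon(0)\to 3$, $r_\varepsilon'(0)=0$ to obtain locally uniform convergence by ODE well-posedness, and then upgrade to global convergence using the monotonicity and decay supplied by Proposition \ref{existence}. The only differences are implementational: where the paper invokes the Cauchy--Lipschitz theorem with parameter (continuous dependence on $\varepsilon$ and on the initial data), you use uniform derivative bounds, Arzel\`a--Ascoli and uniqueness of the subsequential limit; and for the tails the paper uses the simple squeeze ($r_\varepsilon$ and $N$ nonnegative and decreasing on $[0,\infty)$, with $r_\varepsilon(L)\to N(L)$ small), whereas you derive a quantitative exponential decay from the first integral --- both arguments resting on the same monotonicity, so your extra Gronwall step buys nothing essential but is sound.
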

\begin{figure}[h!]
	\begin{minipage}[b]{0.48\linewidth}
		\centering \includegraphics[scale=0.4]{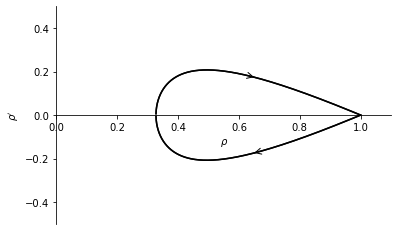}
		\caption{K=1,   $\Gamma >0$,  $\varepsilon=0.82$, \, $G(x)=\frac{(x-1)^2}{2}$.}
		\label{figure1}
	\end{minipage}\hfill
	\begin{minipage}[b]{0.48\linewidth}	
		\centering \includegraphics[scale=0.4]{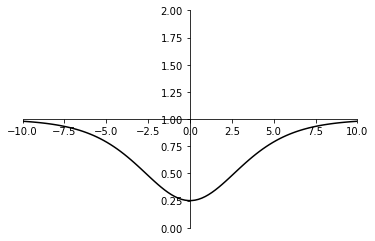}
		\caption{Graph of $\rho(x)$. }
		\label{figure2}
	\end{minipage}\hfill
\end{figure}
First of all we have 
\begin{lemme}
For any $\varepsilon\ll 1$,  $r$ is even,  increasing on $]-\infty,0]$ then decreasing on  $[0,+\infty[$.
Moreover  $$r_{\varepsilon}(0) \underset{\varepsilon \rightarrow 0}{\longrightarrow} 3,$$
and there exists a constant $ C $, not depending on $\varepsilon$, such that
$$
\|r'_{\varepsilon}\|_{\infty} \leq C.
$$
\end{lemme}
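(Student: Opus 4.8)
The plan is to transfer each of the three assertions from the corresponding facts about $\rho$ established in Proposition \ref{existence} and Lemma \ref{convergence rho m}, via the rescaling \eqref{r definition}.

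\textbf{Parity and monotonicity.} First I would record that \eqref{r definition} reads $\rho(x)-1=-\varepsilon^2\gamma\,r_\varepsilon(y)$ with $y=\varepsilon x/\sqrt{K(1)}$, so that $x\mapsto y$ is an increasing linear change of variable and the parity of $r_\varepsilon$ is inherited directly from the evenness of $\rho$. The prefactor $-\varepsilon^2\gamma$ is negative when $\gamma>0$ and positive when $\gamma<0$, so the monotonicity of $r_\varepsilon$ is opposite to that of $\rho-1$ in the first case and identical in the second. In both cases Proposition \ref{existence} (which gives $\rho$ increasing on $[0,+\infty[$ when $\gamma>0$ and decreasing when $\gamma<0$) yields that $r_\varepsilon$ is decreasing on $[0,+\infty[$, hence increasing on $]-\infty,0]$ by evenness.

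\textbf{Value at the origin.} Evaluating \eqref{r definition} at $x=0$ gives $\rho_{m,\varepsilon}-1=-\varepsilon^2\gamma\,r_\varepsilon(0)$, so $r_\varepsilon(0)=(1-\rho_{m,\varepsilon})/(\varepsilon^2\gamma)$. Dividing the two-sided bound of Lemma \ref{convergence rho m} by $\varepsilon^2\gamma$ (and using $|\gamma|/\gamma=\mathrm{sgn}\,\gamma$) collapses it to $3-\varepsilon\le r_\varepsilon(0)\le 3+\varepsilon$, whence $r_\varepsilon(0)\to 3$. In particular $0\le r_\varepsilon\le r_\varepsilon(0)\le 4$ for $\varepsilon$ small: $r_\varepsilon$ decreases from $r_\varepsilon(0)$ to $0$ at infinity and keeps constant sign, that of $(1-\rho)/\gamma$, which is nonnegative on the soliton by the sign analysis of $F_\varepsilon$ on $]\rho_{m,\varepsilon},1[$ (resp. $]1,\rho_{m,\varepsilon}[$).

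\textbf{Uniform derivative bound.} This is where the real work lies. Differentiating \eqref{r definition} gives $\rho'(x)=-(\varepsilon^3\gamma/\sqrt{K(1)})\,r_\varepsilon'(y)$, so the first integral \eqref{****}, namely $\tfrac12 K(\rho)(\rho')^2=F(\rho)$, transforms into
$$
(r_\varepsilon'(y))^2=\frac{K(1)}{\varepsilon^6\gamma^2}\,\frac{2F(\rho)}{K(\rho)}.
$$
The key is that $F$ is of size $\varepsilon^6$ along the soliton: substituting $\alpha=r_\varepsilon(y)$ into the expansion $F_\varepsilon(1-\gamma\alpha\varepsilon^2)=\varepsilon^6\alpha^2\gamma^2\bigl(1-\tfrac{\alpha}{3}+\mathcal{O}(\varepsilon^2\alpha)\bigr)$ already obtained in the proof of Lemma \ref{convergence rho m} yields
$$
(r_\varepsilon'(y))^2=\frac{2K(1)}{K(\rho)}\,r_\varepsilon(y)^2\Bigl(1-\tfrac{r_\varepsilon(y)}{3}+\mathcal{O}(\varepsilon^2 r_\varepsilon(y))\Bigr).
$$
The factor $\varepsilon^{-6}$ cancels exactly; since $\|\rho-1\|_\infty\to 0$ forces $K(\rho)\to K(1)$ uniformly while $0\le r_\varepsilon\le 4$, the right-hand side is bounded by a constant independent of $\varepsilon$ and of $y$, and taking square roots gives $\|r_\varepsilon'\|_\infty\le C$. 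The main obstacle is precisely this last step: one must check that the $\varepsilon^6$ cancellation is exact and \emph{uniform}, i.e. that the remainder $\mathcal{O}(\varepsilon^2 r_\varepsilon)$ is controlled uniformly in $y$ by the already-established bound $0\le r_\varepsilon\le 4$, and that $F(\rho)\ge 0$ throughout so the square root is legitimate.
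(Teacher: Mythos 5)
Your proof is correct and follows essentially the same route as the paper's: the parity and monotonicity are inherited from Proposition \ref{existence}, the limit $r_\varepsilon(0)\rightarrow 3$ comes from Lemma \ref{convergence rho m} via the rescaling \eqref{r definition}, and the uniform bound on $r'_\varepsilon$ comes from the first integral \eqref{****} rewritten in the rescaled variables, where the $\varepsilon^6$ factors cancel exactly as you describe. Your write-up is merely more explicit than the paper's on the sign bookkeeping and on the point that the a priori bound $0\le r_\varepsilon\le r_\varepsilon(0)\le 4$ makes the remainder $\mathcal{O}(\varepsilon^2 r_\varepsilon)$ uniform in $y$, which the paper leaves implicit.
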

\begin{proof}
The variations of r are an immediate consequence of the proposition \ref{existence}. Using \eqref{r definition} and lemma \ref{convergence rho m} we have for $\varepsilon\ll 1$
$$
3-\frac{\varepsilon}{| \gamma |}\leq r_{\varepsilon}(0)\leq 3+\frac{\varepsilon}{| \gamma |}.
$$
Using \eqref{****} we obtain
$$
\frac{\gamma^2}{2K(1)}K(\rho)  r_{\varepsilon}'^2=\frac{r_{\varepsilon}^2\gamma^2}{2}\bigg( 1-r_{\varepsilon}\gamma \frac{1}{3\gamma}\bigg) + O(\varepsilon^2 r_{\varepsilon}^3).
$$
Since $K(1) \ne 0  $ and \; $\rho \underset{\varepsilon \rightarrow 0}{\longrightarrow} 1 \, \text{in} \; C^0(\mathbb{R})  $
 there exists $M$ not depending on  $\varepsilon$ such that 
$$
\|r'_{\varepsilon}\|_{\infty} \leq M,
$$
this ends the proof.
\end{proof}
\begin{proof}[Proof of proposition \ref{prop convergence rho }]
We recall that  
\begin{equation}
    -c^2 \frac{(\rho-1)}{\rho}+ \frac{c^2(\rho-1)^2}{2\rho^2}+g(\rho)=K(\rho) \rho'' +\frac{1}{2}K'(\rho)\rho'^2.
\end{equation}
After lenghty but simple computations, we find that $(r_{\varepsilon},r_{\varepsilon}')$ is a solution of
$$X'=f(t,X,\varepsilon),\; X(0)= \bigg(\begin{array}{c}
r_\varepsilon(0) \\
0 
\end{array} \bigg),$$
with 
\begin{align*}
    f&:\mathbb{R}\times U\times]-\delta,\delta[ \longrightarrow \mathbb{R}^2
    \\
    &f\bigg( t, \bigg(\begin{array}{c}
r \\
r' 
\end{array} \bigg),\varepsilon\bigg)=\bigg(\begin{array}{c}
r' \\
g(r,r',\varepsilon)
\end{array} \bigg),
\end{align*}
where $U =]0,4[ \times ]-M-1,M+1[$,
\begin{align*}
g(r,r',\varepsilon)=r-\frac{r^2}{2}+ 
    \varepsilon^2R_1(\varepsilon^2r) + \varepsilon^2r^3P_1(\varepsilon^2r)
    +\varepsilon^2R_2(\varepsilon^2r)r'^2.
\end{align*}
with 
$P_1,R_1,R_2 \in C^{\infty}(]-1,1[)$.
 We recall that
$$
N(x)=\frac{3}{ch^2(x/2)}.
$$
Moreover $N$ is a solution of
$$
 X'=f(t,X,0), \; X(0)= \bigg(\begin{array}{c}
3 \\
0 
\end{array} \bigg).
$$
Then as $r_{\varepsilon}$ is a solution of
$$X'=f(t,X,\varepsilon),\; X(0)= \bigg(\begin{array}{c}
r_\varepsilon(0) \\
0 
\end{array} \bigg), $$
with $r_\varepsilon(0) \rightarrow 3,$ we obtain, using the Cauchy-Lipschitz theorem with parameter, that for any compact set  $[a,b]$
\begin{equation}
   r_{\varepsilon} \underset{\varepsilon \rightarrow 0}{\longrightarrow} N , \, \text{in} \; C^0([a,b]),
\end{equation}
and
\begin{equation}
\label{dérivé convergence r local}
   r'_{\varepsilon} \underset{\varepsilon \rightarrow 0}{\longrightarrow} N' , \, \text{in} \; C^0([a,b]).
\end{equation}
We have using proposition \ref{existence} and \eqref{r definition} that the functions $N,r_{\varepsilon}$ are increasing on $]-\infty,0]$, decreasing on $[0,\infty[ $  and converge to 0 in $+\infty$ and $-\infty$. Thus we obtain 
$$
 r_{\varepsilon} \underset{\varepsilon \rightarrow 0}{\longrightarrow} N , \, \text{in} \; C^0(\mathbb{R}).
$$
which concludes the proof.
\end{proof}
The following proposition ends the proof of proposition  \ref{dim 1 convergence gamma pas 0}
\begin{proposition}
\label{convergence uniforme proposition finale r}
We have for all $k\in \mathbb{N}$
$$
\|r_{\varepsilon}^{(k)} -N^{(k)}\|_{L^{\infty}(\mathbb{R})} \underset{\varepsilon \rightarrow 0}{\longrightarrow} 0.
$$
\end{proposition}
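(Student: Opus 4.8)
The plan is to upgrade the two convergences already at our disposal---the global convergence $r_\varepsilon\to N$ in $C^0(\mathbb R)$ of Proposition \ref{prop convergence rho } and the local convergence \eqref{dérivé convergence r local} of the first derivatives---into global $C^k$ convergence for every $k$. Two ingredients are required: local $C^k$ convergence on every compact interval, and a control of the tails that is uniform in $\varepsilon$, so that convergence does not deteriorate at infinity. Granting both, the conclusion is elementary: given $\delta>0$ and $k\in\mathbb N$, the tail estimate furnishes $R>0$ with $|r_\varepsilon^{(k)}(x)|+|N^{(k)}(x)|<\delta$ for $|x|>R$ uniformly in $\varepsilon$, while the local convergence makes $\|r_\varepsilon^{(k)}-N^{(k)}\|_{L^\infty([-R,R])}<\delta$ for $\varepsilon$ small; adding the two estimates yields $\|r_\varepsilon^{(k)}-N^{(k)}\|_{L^\infty(\mathbb R)}<2\delta$.

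For the local $C^k$ convergence I would argue by induction on $k$, differentiating the second-order equation $r_\varepsilon''=g(r_\varepsilon,r_\varepsilon',\varepsilon)$ exhibited in the proof of Proposition \ref{prop convergence rho }. Since $g$ is smooth in all three arguments and $(r_\varepsilon,r_\varepsilon')$ converges to $(N,N')$ uniformly on compacts, the chain rule expresses $r_\varepsilon^{(k+1)}$ as a fixed smooth function of $r_\varepsilon,\dots,r_\varepsilon^{(k)}$ and $\varepsilon$; passing to the limit, using the inductive hypothesis together with $g(\cdot,\cdot,\varepsilon)\to g(\cdot,\cdot,0)$ locally uniformly, gives $r_\varepsilon^{(k+1)}\to N^{(k+1)}$ on every compact. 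This is simply continuous dependence on the parameter $\varepsilon$ in the $C^k$ norm on compacts, and it presents no real difficulty.

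The crux is the uniform tail estimate, and here I would exploit the first integral \eqref{****}. In the rescaled variable it reads $\tfrac{K(\rho)}{K(1)}\,r_\varepsilon'^2=r_\varepsilon^2\bigl(1-\tfrac{r_\varepsilon}{3}\bigr)+O(\varepsilon^2 r_\varepsilon^3)$, the remainder being controlled by the uniform bounds $\|r_\varepsilon'\|_\infty\le M$ and $\rho\to 1$ in $C^0(\mathbb R)$ established before the proof of Proposition \ref{prop convergence rho }. Since $r_\varepsilon\ge 0$ is even and, by Proposition \ref{existence}, decreasing on $[0,\infty)$ with $r_\varepsilon\to 0$ at infinity, in the region where $r_\varepsilon$ is small one has $K(\rho)\to K(1)$ and this identity degenerates into $r_\varepsilon'=-r_\varepsilon\,(1+o(1))$, uniformly in $\varepsilon$. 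Integrating the resulting differential inequality produces a uniform exponential bound $r_\varepsilon(x)\le C e^{-\kappa|x|}$ with $C,\kappa>0$ independent of $\varepsilon$ (any $\kappa<1$ works, matching the eigenvalues $\pm1$ of the linearization at the equilibrium $(0,0)$). The same identity then dominates $r_\varepsilon'$ by the same exponential, and feeding this back into $r_\varepsilon''=g(r_\varepsilon,r_\varepsilon',\varepsilon)$---whose right-hand side vanishes to first order at $(0,0)$---propagates the decay to $r_\varepsilon''$ and, by successive differentiation, to every $r_\varepsilon^{(k)}$, with constants uniform in $\varepsilon$. As $N$ and all its derivatives decay exponentially too, this delivers exactly the tail control needed. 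The main obstacle is therefore confined to making the implication ``$r_\varepsilon$ small $\Rightarrow r_\varepsilon'\approx -r_\varepsilon$'' quantitative and uniform in $\varepsilon$; once this uniform exponential decay is secured, the remainder of the argument is the routine combination with the local convergence described above.
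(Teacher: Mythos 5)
Your proposal is correct and follows essentially the same route as the paper: local convergence of all derivatives via continuous dependence on $\varepsilon$ and differentiation of the ODE, uniform-in-$\varepsilon$ control of the tails via the first integral \eqref{****}, and propagation to higher derivatives by induction through the equation. The only difference is cosmetic: you derive a quantitative uniform exponential decay bound, whereas the paper only extracts the weaker (and sufficient) qualitative statement that $\sup_{\varepsilon'\leq\varepsilon}\sup_{|x|\geq M}\big(|r_{\varepsilon'}(x)|+|r'_{\varepsilon'}(x)|\big)\to 0$ as $M\to\infty$.
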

\begin{proof}
We already know the result for $k=0$, we prove it for $k=1$. We have by \eqref{dérivé convergence r local} uniform convergence on any compact.
As $r_{\varepsilon}$ converges uniformly to $N$   and $N(x) \underset{|x| \rightarrow \infty}{\longrightarrow} 0$ then $\underset{\varepsilon'\leq \varepsilon}{sup}\,\underset{M\leq |x|}{sup}\, r_{\varepsilon'}(x) \underset{M \rightarrow \infty}{\longrightarrow} 0$. Since we have
\begin{equation}
    \frac{\gamma^2}{2K(1)}K(\rho)  r_{\varepsilon}^{'2}=\frac{r_{\varepsilon}^2\gamma^2}{2}\bigg( 1-r_{\varepsilon}\gamma \frac{1}{3\gamma}\bigg) + O(\varepsilon^2 r_{\varepsilon}^3). 
\end{equation}
we obtain that $\underset{\varepsilon'\leq \varepsilon}{sup}\,\underset{M\leq |x|}{sup}\,r_{\varepsilon'}^{'}\underset{|x| \rightarrow \infty}{\longrightarrow} 0$. Moreover $N'(x) \underset{|x| \rightarrow \infty}{\longrightarrow} 0$ so we obtain
$$ \forall \eta >0, \exists \varepsilon', \exists M, \big( |x|>M, \varepsilon <\varepsilon' \implies |r'_{\varepsilon}(x)-N(x)| \leq |r'_{\varepsilon}(x)|+|N(x)| \leq \eta \big),$$
combining with \eqref{convergence r uiformément global} we have 
\begin{equation}
\label{convergence r' global}
    \|r_{\varepsilon}^{(1)} -N^{(1)}\|_{L^{\infty}(\mathbb{R})} \underset{\varepsilon \rightarrow 0}{\longrightarrow} 0.
\end{equation}
The result for higher order follows by use of the ODE and a simple induction argument.
\end{proof}
\begin{rem}
Using \eqref{substitutuion} and letting
\begin{equation}
    u(x)=-\varepsilon^2\gamma v_{\varepsilon}\bigg( \frac{\varepsilon x}{\sqrt{K(1)}}\bigg),
\end{equation}
we obtain
$$
v_{\varepsilon}(x)=\frac{\sqrt{1-\varepsilon^2}}{1 -\varepsilon^2\gamma r_{\varepsilon}(x)}r_{\varepsilon}(x),
$$
so by the proposition \ref{convergence uniforme proposition finale r} we have that 
$$\|r_{\varepsilon} -N\|_{L^{\infty}(\mathbb{R})} \underset{\varepsilon \rightarrow 0}{\longrightarrow} 0.$$
On the other hand by using Leibniz formula one has, for all $n \in \mathbb{N}$\begin{align*}
    v_{\varepsilon}(x)= \sum_{k=0}^{n-1}\bigg(\frac{\sqrt{1-\varepsilon^2}}{1 -\varepsilon^2\gamma r_{\varepsilon}(x)}\bigg)^{(k)} r_{\varepsilon}^{(n-k)}(x)+\bigg(\frac{\sqrt{1-\varepsilon^2}}{1 -\varepsilon^2\gamma r_{\varepsilon}(x)}\bigg) r_{\varepsilon}^{(n)}(x),
\end{align*}
and therefore by proposition \eqref{convergence uniforme proposition finale r} we have 
for all $n \in \mathbb{N}$
\begin{equation}
    \|v^{(n)}_{\varepsilon} -N^{(n)}\|_{L^{\infty}(\mathbb{R})} \underset{\varepsilon \rightarrow 0}{\longrightarrow} 0,
\end{equation}
a result similar to the one obtained in dimension 2.
\end{rem}
\paragraph{The case $\Gamma=0$}
The proof of proposition \ref{proposition gamma = 0 } is similar to what was done earlier (see figure \ref{figure12} for a phase portrait). let us give a heuristic argument. Substituting
$$
\rho=1+\varepsilon r\bigg(\frac{\varepsilon x}{\sqrt{K(1)}}\bigg),
$$ into \eqref{**} we find 
$$
\bigg(\frac{g'''(1)-12}{6}\bigg)r^3+r=r''+O(\varepsilon).
$$
Then differentiating this equation we have 
$$
\frac{1}{2}w^2w'+w'=w'''.
$$
where $$w=\frac{1}{\sqrt{g'''(1)-12}}.$$
For a similar (and more general) result  in the case of the non-linear Schrödinger  equation, see \cite{Chiron}. 
\begin{figure}[h!]
	\begin{minipage}[b]{0.8\linewidth}
		\centering \includegraphics[scale=0.6]{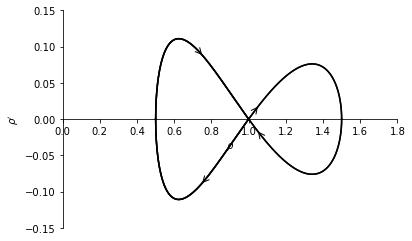}
    \caption{K=1,   $\Gamma =0$,  $\varepsilon$=0.5, \, $G(x)=\frac{(x-1)^2}{2}-\frac{(x-1)^3}{2}$.}\label{figure12}
	\end{minipage}\hfill
\end{figure}
\newline\newline\newline\newline\newline\newline\newline\newline\newline\newline\newline
\bibliography{biblio}
\bibliographystyle{plain}
\end{document}